\let\epsilon\varepsilon
\let\phi\varphi
\def\proof{\medskip\noindent{\bf Proof. }}
\def\into{\longrightarrow}
\def\Im{{\rm Im}}
\newfont\rsfseleven{rsfs10 scaled 1100}
\def\LL{\mbox{\normalfont\rsfseleven L}}
\def\BB{\mbox{\normalfont\rsfseleven B}}
\numberwithin{equation}{section}
\newtheorem{theorem}{Theorem}[section]
\newtheorem{lemma}[theorem]{Lemma}
\newtheorem{proposition}[theorem]{Proposition}
\newtheorem{corollary}[theorem]{Corollary}
\theoremstyle{definition}
\newtheorem{remark}[theorem]{Remark}
\begin{document}
\title{Polynomial Stabilization of Solutions to a\\ Class of Damped Wave Equations
\thanks{This work was supported by the National Natural Science Foundation of China (grant No. 60974033) and Beijing Municipal Natural Science Foundation (grant No. 4132051).
\medskip} }

\author{Otared Kavian\thanks{Laboratoire de Math\'ematiques Appliqu\'ees, UMR 8100 du CNRS, Universit\'e Paris--Saclay (site UVSQ),
45 avenue des Etats Unis, 78035 Versailles Cedex, France. Email: kavian@math.uvsq.fr}, and Qiong Zhang\thanks{School of
Mathematics, Beijing Institute of Technology, Beijing, 100081,
China (zhangqiong@bit.edu.cn).}}
\date{}
\maketitle

\begin{center}
\begin{minipage}{5.5in}
{\bf Abstract.}
We consider a class of wave equations of the type $\partial_{tt} u + Lu + B\partial_{t} u = 0$, with a self-adjoint operator $L$, and various types of local damping represented by $B$. By establishing appropriate and raher precise estimates on the resolvent of an associated  operator $A$ on the imaginary axis of ${{\Bbb C}}$, we prove  polynomial decay of the semigroup $\exp(-tA)$ generated by that operator. We point out that the rate of decay depends strongly on the concentration of eigenvalues and that of the eigenfunctions of the operator $L$.  We give several examples of application of our abstract result, showing in particular that for a rectangle $\Omega := (0,L_{1})\times (0,L_{2})$ the decay rate of the energy is different depending on whether the ratio $L_{1}^2/L_{2}^2$ is rational, or irrational but algebraic.
\bigskip

\centerline{\bf Version of March 2017}
\date

\bigskip

{\bf Keywords.} wave equation, Kelvin-Voigt, damping, polynomial
stability.

\bigskip

{\it MSC:\:} primary 37B37;\: secondary 35B40; 93B05; 93B07; 35M10; 34L20; 35Q35; 35Q72.
 \vskip 3mm

\end{minipage}
\end{center}


\section{Introduction}\label{sec-Intro}

\indent In this paper, we study the long time behavior of a class of wave equations with various types of damping (such as Kelvin-Voigt damping,   viscous damping, or both).
More precisely, let $N \geq 1$ be an integer, and let $\Omega\subset {\Bbb R}^N$ be a bounded Lipschitz domain, its boundary being denoted by $\partial\Omega$. The wave equations we study in this paper are of the following type
\begin{equation}\label{eq:System-1}
\begin{cases}
u_{tt} - \Delta u +
b_{1}(x) u_{t} -
{\rm div}\left(b_{2}(x)\nabla u_{t} \right) = 0  & \mbox{ in } (0, \infty) \times \Omega, \\
u(t,\sigma) = 0  & \mbox{ on } (0, \infty) \times \partial\Omega, \\
u(0,x)  = u_{0}(x) & \mbox{ in } \Omega \\
u_{t}(0,x)  = u_{1}(x) & \mbox{ in } \Omega.
\end{cases}
\end{equation}
In the above equation $\Delta$ is the Laplace operator on ${\Bbb R}^N$ and we denote $u_{t} := \partial_{t}u$ and $u_{tt} := \partial_{tt}u$, while $b_{1},b_{2} \in L^\infty(\Omega)$ are two nonnegative functions such that at least one of the following conditions 
\begin{equation}\label{eq:Cnd-b1}
\exists\,\epsilon_{1}>0 \;  \mbox{  and }  \;  \emptyset \neq \Omega_{1} \subset \Omega, \;  \mbox{ s.t.  }  \;  \Omega_{1}\; \mbox{ is open }\;  \mbox{  and }  \; b_{1} \geq \epsilon_{1}\; \mbox{on }\Omega_{1}
\end{equation}
or
\begin{equation}\label{eq:Cnd-b2}
\exists\,\epsilon_{2} > 0\;  \mbox{  and }  \; \emptyset \neq \Omega_{2} \subset \Omega, \;  \mbox{ s.t.  }  \;  \Omega_{2}\;\mbox{ is open }\;  \mbox{  and }  \; b_{2} \geq \epsilon_{2}\; \mbox{on }\Omega_{2}
\end{equation}
is satisfied. When $b_{1} \equiv 0$ and condition \eqref{eq:Cnd-b2} is satisfied, the wave equation described in \eqref{eq:System-1} corresponds to a wave equation with local viscoleastic damping on $\Omega_{1}$, that is a damping of Kelvin--Voigt's type (see, e.g., S.~Chen, K.~Liu \& Z.~Liu \cite{Chen-Liu-Liu}, K.~Liu \& Z.  Liu \cite{Liu-Liu}, M.~Renardy~\cite{Renardy}, and references therein). The case in which $b_{2} \equiv 0$, and \eqref{eq:Cnd-b1} is satisfied, corresponds to a damped wave equation where the damping, or friction, is activated on the subdomain $\Omega_{1}$, and is proportional to the velocity $u_{t}$ (see, e.g., C. Bardos, G.~Lebeau \& J.~Rauch \cite{Bardos-LR}, G.~Chen, S.A.~Fulling, F.J.~Narcowich \& S.~Sun \cite{Chen-Fulling}, and references therein).

The energy function associated to the system \eqref{eq:System-1} is
\begin{equation}\label{eq:energy}
E(t) =\frac{1}{2} \int_{\Omega}|u_t(t,x)|^2dx +
{1 \over 2 }\int_{\Omega}|\nabla u(t,x)|^2 \,dx.
\end{equation}
and it is dissipated according to the following relation:
\begin{equation}\label{eq:Energy-diss}
{{d}\over{dt}}E(t) = - \int_{\Omega}b_{1}(x)|u_{t}(t,x)|^2\, dx-\int_{\Omega} b_{2}(x) |\nabla u_t(t,x) |^2 dx.
\end{equation}
If $\Omega_{1} = \Omega$, that is a damping of viscous type exists on the whole domain, it is known that  the associated semigroup is exponentially stable (G.~Chen \& al \cite{Chen-Fulling}).
It is also known that the Kelvin-Voigt damping is stronger than the viscous damping, in the sense that if $\Omega_{2} = \Omega$, the damping for the wave equation not only induces exponential energy decay, but also restricts the spectrum of the associated semigroup generator to a sector in the left half plane, and the associated semigroup is analytic (see e.g.\ S.~Chen \& al \cite{Chen-Liu-Liu} and references therein).

When $b_{2} \equiv 0$ and the viscous damping is only present on a subdomain $\Omega_{1} \neq \Omega$, it is known that geometric optics conditions  guarantee the exact controllability, and
the exponential stability of the system (C.~Bardos \& al. \cite{Bardos-LR}).
However, when $b_{1} \equiv 0$, and $\Omega_{2} \neq \Omega$, even if $\Omega_{2}$ satisfies those geometric optics conditions, the  Kelvin-Voigt damping model does not necessarily have an exponential decay.  In fact, for the one dimensional case $N=1$, S.~Chen \& al. \cite{Chen-Liu-Liu} have proved that when $b_{2} := 1_{\Omega_{2}}$ (and $\Omega_{2} \neq \Omega$), the energy of the Kelvin--Voigt  system \eqref{eq:System-1} does not have an exponential decay.
A natural question is to study the decay properties of the wave equation with local viscoelastic damping, or when viscous damping is local and geometric optics conditions are not satisfied.
\medskip

Our aim is to show that, if one of the conditions \eqref{eq:Cnd-b1} or \eqref{eq:Cnd-b2} is satisfied then, as a matter of fact, the energy functional decreases to zero at least with a polynomial rate: more precisely, there exists a real number $ m > 0$ and a positive constant $c > 0$ depending only on $\Omega_{1},\Omega_{2},\Omega$ and on $b_{1},b_{2}$, such that
$$E(t) \leq {c \over (1 + t)^{2/m}}\, \left(\|\nabla u_{0}\|^2 + \|u_{1}\|^2 \right). $$
The positive number $m$ depends in an intricate way on the distribution of the eigenvalues $(\lambda_{k})_{k \geq 1}$ of the Laplacian with Dirichlet boundary conditions on $\partial\Omega$, and it depends also strongly on the concentration, or {\it localization\/}, properties of the corresponding eigenfunctions, and thus on the geometry of $\Omega$, and those of $\Omega_{1}, \Omega_{2}$. More precisely, let $(\lambda_{k})_{k \geq 1}$ be the sequence of eigenvalues given by
$$-\Delta \phi_{k,j} = \lambda_{k}\phi_{k,j}\quad\mbox{in }\,\Omega,\qquad \phi_{k,j}\in H^1_{0}(\Omega), \quad \int_{\Omega}\phi_{k,j}(x)\phi_{\ell,i}(x)dx = \delta_{k\ell}\delta_{ij}.$$
Here we make the convention that each eigenvalue has multiplicity $m_{k} \geq 1$ and that in the above relation $1 \leq j \leq m_{k}$ and $1\leq i \leq m_{\ell}$. As usual, the eigenvalues $\lambda_{k}$ are ordered in an increasing order: $0 < \lambda_{1} < \lambda_{2} < \cdots < \lambda_{k} < \lambda_{k+1}<\cdots$. We shall consider the cases where an exponent denoted by $\gamma_{1} \geq 0$ exists such that for some constant $c_{0} > 0$ one has
\begin{equation}\label{eq:Cnd-Gap-1}
\forall\, k \geq 2,\qquad \min\left({\lambda_{k} \over \lambda_{k-1}} -1,
1 - {\lambda_{k} \over \lambda_{k+1}}\right) \geq c_{0} \lambda_{k}^{-\gamma_{1}}.
\end{equation}
We shall need also the following assumption on the concentration properties of the eigenfunctions $\phi_{k,j}$: there exist a constant $c_{1} > 0$ and an exponent $\gamma_{0} \in {\Bbb R}$ such that if $\phi := \sum_{j=1}^{m_{k}}c_{j}\phi_{k,j}$ with $\sum_{j=1}^{m_{k}}|c_{j}|^2 = 1$, then
\begin{equation}\label{eq:Concentration-1}
\forall\, k \geq 2,\qquad \int_{\Omega_{1}}|\phi(x)|^2\,dx +
\int_{\Omega_{2}}|\nabla\phi(x)|^2\,dx \geq c_{1}\, \lambda_{k}^{-\gamma_{0}}.
\end{equation}
Then we show that if $m := 3 + 2\gamma_{0} + 4 \gamma_{1}$, the decay of the energy is at least of order $(1 + t)^{-2/m}$. The fact that the above conditions are satisfied for certain domains $\Omega$, and subdomains $\Omega_{1}, \Omega_{2}$, depends strongly on the geometry of these domains and will be investigated later in this paper, by giving examples in which these assumptions are satisfied.
\medskip

Before stating our first result, which will be in an abstract setting and will be applied to several examples later in this paper, let us introduce the following notations. We consider an infinite dimensional, complex, separable Hilbert space $H_{0}$, and a positive (in the sense of forms) self-adjoint operator $(L,D(L))$ acting on $H_{0}$, which has a compact resolvent (in particular $D(L)$ is dense in $H_{0}$ and $(L,D(L))$ is an unbounded operator). The dual of $H_{0}$ having been identified with $H_{0}$, we define the spaces $H_{1}$ and $H_{-1}$ by
\begin{equation}\label{eq:Def-H-1}
H_{1} := D(L^{1/2}) \qquad\mbox{and}\qquad H_{-1} := (H_{1})',
\end{equation}
the space $H_{1}$ being endowed with the norm $u\mapsto \|L^{1/2}u\|$. 
We shall denote also by $\langle \cdot,\cdot\rangle$ the duality between $H_{-1}$ and $H_{1}$, adopting the convention that if $f \in H_{0}$ and $\phi \in H_{1}$, then
$$\langle f, \phi \rangle = (\phi|f).$$
As we mentioned earlier, we adopt the convention that the spectrum of $L$ consists in a sequence of distinct eigenvalues $(\lambda_{k})_{k \geq 1}$, with the least eigenvalue $\lambda_{1} > 0$, numbered in an increasing order and $\lambda_{k} \to +\infty$ as $k\to\infty$, each eigenvalue $\lambda_{k}$ having multiplicity $m_{k} \geq 1$.

Next we consider an operator $B$ satisfying the following conditions:
\begin{equation}\label{eq:Cnd-Op-B-1}
\begin{cases}
B : H_{1} \into  H_{-1} \,\mbox{ is bounded,} \\
B^* = B,\; \mbox{i.e.} \quad \langle B\phi,\psi\rangle = \overline{\langle B\psi,\phi\rangle} \; \mbox{ for }\,\phi,\psi\in H_{1},\\
\forall\, \phi\in H_{1}, \qquad \langle B\phi, \phi \rangle \geq 0.
\end{cases}
\end{equation}
We will assume moreover that $B$ satisfies the non degeneracy condition
\begin{equation}\label{eq:Cnd-Op-B-2}
\forall k \geq 1, \quad
{1 \over \beta_{k}} := \min\left\{ \langle B\phi,\phi\rangle \; ; \;
\phi \in N(L - \lambda_{k}I),\; \|\phi\| = 1 \right\} > 0,
\end{equation}
and we study the abstract second order equation of the form
\begin{equation}\label{eq:System-Abs}
\begin{cases}
u_{tt} + L u + Bu_{t} = 0  & \mbox{ in } (0, \infty)  \\
u(t) \in D(L) &\\
u(0)  = u_{0} \in D(L) &\\
u_{t}(0) = u_{1} \in H_{1} .&
\end{cases}
\end{equation}
Our first result is:

\begin{theorem}\label{lem:Main-1} Assume that the operators $(L,D(L))$ and $B$ are as above, and let the eigenvalues $(\lambda_{k})_{k \geq 1}$ of $L$ satisfy 
\begin{equation}\label{eq:Poly-growth-eigen}
\lambda_{*} := \inf_{k \geq 1} {\lambda_{k} \over \lambda_{k+1}} > 0.
\end{equation}
Moreover assume that there exist a constant $c_{0} >0$ and two exponents $\gamma_{0} \in {\Bbb R}$ and $\gamma_{1} \geq 0$ such that, $\beta_{k}$ being defined by \eqref{eq:Cnd-Op-B-2}, for all inetgers $k \geq 1$ we have
\begin{eqnarray}
&\beta_{k} \leq c_{0} \, \lambda_{k}^{\gamma_{0}}\, , \label{eq:Cnd-beta-k} \\
& \displaystyle {\lambda_{k-1} \over \lambda_{k} - \lambda_{k-1}} + {\lambda_{k+1} \over \lambda_{k+1} - \lambda_{k}} \leq c_{0}\, \lambda_{k}^{\gamma_{1}}. \label{eq:Cnd-alpha-k}
\end{eqnarray}
Then, setting $m := 3 + 2\gamma_{0} + 4\gamma_{1}$, there exists a constant $c_{*} > 0$ such that for all $(u_{0},u_{1})\in D(L)\times H_{1}$, and for all $t > 0$, the energy of the solution to equation \eqref{eq:System-Abs} satisfies
\begin{equation}\label{eq:Decay-Abs}
(Lu(t)|u(t)) + \|u_{t}(t)\|^2 \leq c_{*} (1+t)^{-2/m}\left[ (Lu_{0}|u_{0}) + \|u_{1}\|^2\right]\, .
\end{equation}
\end{theorem}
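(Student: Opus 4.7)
The plan is to rewrite \eqref{eq:System-Abs} as a first-order Cauchy problem $\partial_t U + \mathcal{A}U = 0$ on the energy Hilbert space $\mathcal{H} := H_1\times H_0$ with
\[
\mathcal{A}\begin{pmatrix} u \\ v\end{pmatrix} := \begin{pmatrix} -v \\ Lu + Bv\end{pmatrix},
\]
for which conditions \eqref{eq:Cnd-Op-B-1} and the positivity of $L$ imply that $-\mathcal{A}$ generates a $C_0$-contraction semigroup on $\mathcal{H}$ whose norm is precisely $\|U(t)\|_{\mathcal{H}}^2 = (Lu(t)|u(t))+\|u_t(t)\|^2$. The target estimate \eqref{eq:Decay-Abs} is equivalent to $\|\exp(-t\mathcal{A})\mathcal{A}^{-1}\|_{\mathcal{H}} = O(t^{-2/m})$; by the Borichev--Tomilov characterization of polynomial decay on Hilbert spaces, this amounts to establishing (i) $i\mathbb{R} \subset \rho(\mathcal{A})$ together with (ii) the uniform resolvent estimate
\[
\|(i\tau I - \mathcal{A})^{-1}\|_{\mathcal{H}\to\mathcal{H}} \leq C\,(1+|\tau|)^{m/2} \qquad \text{for every } \tau \in \mathbb{R}.
\]

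Eliminating the second component of the resolvent equation via $v = f - i\tau u$, proving (ii) reduces to solving
\[
T(\tau)\, u := (L - \tau^2 I - i\tau B)\, u = \Phi
\]
in $H_{-1}$, where $\|\Phi\|_{H_{-1}} \leq C\,(1+|\tau|)(\|f\|_{H_1} + \|g\|_{H_0})$. Testing this identity against $u$ itself in the duality bracket produces the two fundamental relations
\[
\|L^{1/2}u\|^2 - \tau^2\|u\|^2 = \Re\,\langle \Phi, u\rangle, \qquad \tau\,\langle Bu,u\rangle = -\Im\,\langle \Phi,u\rangle,
\]
and one then expands $u$ in the orthonormal eigenbasis $\{\phi_{k,j}\}$ of $L$. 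Away from any spectral neighbourhood of the $\lambda_k$, the inversion of $T(\tau)$ is straightforward thanks to \eqref{eq:Poly-growth-eigen}; the whole difficulty concentrates at those indices $k$ for which $\tau^2$ is very close to an eigenvalue $\lambda_k$.

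The main obstacle is the near-resonant regime. Let $k=k(\tau)$ denote the eigenvalue index closest to $\tau^2$, and split $u = P_k u + u^\perp$ where $P_k$ is the orthogonal projection onto $N(L-\lambda_k I)$. On the singular component $P_k u$ the self-adjoint part $L-\tau^2 I$ is essentially inert, so the damping must carry the estimate: the non-degeneracy \eqref{eq:Cnd-Op-B-2} combined with \eqref{eq:Cnd-beta-k} provides coercivity of $B$ on $N(L-\lambda_k I)$ at the cost of the factor $\beta_k \lesssim \lambda_k^{\gamma_0}$, and the imaginary-part identity above then converts this, after a further loss of $|\tau|^{-1}$, into a bound of the form $\|P_k u\|^2 \lesssim \lambda_k^{\gamma_0}|\tau|^{-1}\|\Phi\|_{H_{-1}}\|u\|_{H_1}$. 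The orthogonal component $u^\perp$ is inverted eigenspace by eigenspace; the spectral gap assumption \eqref{eq:Cnd-alpha-k} ensures that $|\lambda_\ell - \tau^2|^{-1}$ is at worst of order $\lambda_k^{\gamma_1-1}$ for the nearest neighbours $\ell = k\pm 1$ and decays like $1/\lambda_\ell$ elsewhere, giving an acceptable control of $\|L^{1/2}u^\perp\|$. Combining the two pieces, absorbing the cross-term $\|u\|_{H_1}$ on the right via Young's inequality and using $\lambda_k \sim \tau^2$, one obtains a total loss of order $|\tau|^{m}$ on $\|u\|_{H_1}^2$, which yields (ii) after taking square roots. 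Assertion (i) follows from the same testing argument: any solution of $\mathcal{A}(u,v) = i\tau(u,v)$ must satisfy $\langle Bu, u\rangle = 0$, which by \eqref{eq:Cnd-Op-B-2} forces $u$ to vanish on every eigenspace of $L$, hence $u \equiv 0$. The Borichev--Tomilov theorem then delivers \eqref{eq:Decay-Abs}.
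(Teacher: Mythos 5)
Your overall strategy coincides with the paper's: rewrite \eqref{eq:System-Abs} as a first-order system, reduce the resolvent equation to $L_\tau u:=Lu-\tau^2u-{\rm i}\,\tau Bu=\Phi$, split $u$ into the resonant eigenspace component and its orthogonal complement, exploit the dissipation identity $|\tau|\,\langle Bu,u\rangle\le\|\Phi\|_{H_{-1}}\|u\|_{H_1}$ together with \eqref{eq:Cnd-Op-B-2}, \eqref{eq:Cnd-beta-k}, \eqref{eq:Cnd-alpha-k}, and invoke Theorem \ref{lem:Borichev-Tomilov}. The genuine gap is quantitative. Since the energy is the \emph{square} of the ${\Bbb H}$-norm, the decay \eqref{eq:Decay-Abs} delivered by the Borichev--Tomilov route (which controls $\|S(t)A^{-1}\|$, hence solutions with data in $D(A)$) requires $\|S(t)A^{-1}\|=O(t^{-1/m})$, i.e. the resolvent bound $\|R({\rm i}\,\tau,A)\|=O\left((1+|\tau|)^{m}\right)$ --- not $\|S(t)A^{-1}\|=O(t^{-2/m})$ and $\|R({\rm i}\,\tau,A)\|=O\left((1+|\tau|)^{m/2}\right)$ as you assert. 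This factor-of-two slip would be harmless if your estimates actually produced the stronger bound $(1+|\tau|)^{m/2}$, but they cannot: the problem is linear and every inequality in your scheme is homogeneous of degree one in the data (the imaginary-part identity gives $\langle Bu,u\rangle\le|\tau|^{-1}\|\Phi\|_{H_{-1}}\|u\|_{H_1}$, your bound on $\|P_ku\|^2$ carries the product $\|\Phi\|_{H_{-1}}\|u\|_{H_1}$, and the off-resonant estimates pick up Cauchy--Schwarz cross terms of the same form). After absorbing $\|u\|_{H_1}$ by Young's inequality such inequalities yield $\|u\|_{H_1}\le C(1+|\tau|)^{3+2\gamma_0+4\gamma_1}\|\Phi\|_{H_{-1}}$ --- the losses $\beta_k\lambda_k\sim|\tau|^{2+2\gamma_0}$, $(\alpha_{k-1}+\alpha_{k+1})^2\sim|\tau|^{4\gamma_1}$ and one extra power of $|\tau|$ multiply, exactly as in Proposition \ref{lem:Estim-1} --- and not the square root of that: from $\|u\|_{H_1}^2\lesssim|\tau|^{m}\|\Phi\|_{H_{-1}}\|u\|_{H_1}$ one gets exponent $m$, not $m/2$. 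So your step (ii) is not established; what your argument can establish is the exponent $m$, which is precisely what the theorem needs once the reduction is corrected.

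Two further points. First, even with the exponent corrected, your passage from the bound on $L_\tau^{-1}$ to the bound on $R({\rm i}\,\tau,A)$ loses powers of $|\tau|$: using only $\|\Phi\|_{H_{-1}}\le C(1+|\tau|)\left(\|f\|_{H_1}+\|g\|_{H_0}\right)$ and $v=-f-{\rm i}\,\tau u$ gives $\|R({\rm i}\,\tau,A)\|\lesssim(1+|\tau|)^{2}\|L_\tau^{-1}\|_{H_{-1}\to H_1}$, hence only $(1+|\tau|)^{m+2}$ and the weaker rate $(1+t)^{-2/(m+2)}$. The paper avoids this via the identity $L_\tau^{-1}(B-{\rm i}\,\tau I)=({\rm i}\,\tau)^{-1}\left(L_\tau^{-1}L-I\right)$ and the $H_0$-estimate for $L_\tau^{-1}$ (Lemma \ref{lem:Estim-1-H0} and Proposition \ref{lem:Resolvent-A-1}); you need these, or an equivalent refinement, to reach the stated exponent. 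Second, for (i) you only argue injectivity; since $B$ maps $H_1$ into $H_{-1}$, the resolvent of $A$ need not be compact (Kelvin--Voigt case), so ${\rm i}\,\tau\in\rho(A)$ also requires a surjectivity or closed-range argument as in Lemmas \ref{lem:A-invertible} and \ref{lem:Resolvent-A}. Moreover $\langle Bu,u\rangle=0$ does not by itself force the eigenprojections of $u$ to vanish: one must first conclude $Bu=0$ from \eqref{eq:CS-ineq}, hence $Lu=\tau^2u$, and only then apply \eqref{eq:Cnd-Op-B-2} to the resulting eigenvector.
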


Our approach consists in establishing, by quite elementary arguments, rather precise a priori estimates on the resolvent of the operator
\begin{equation}
u := (u_{0},u_{1}) \mapsto Au := \left(-u_{1}, L u_{0} + B u_{1} \right)
\end{equation}
on the imaginary axis ${\rm i}\,{\Bbb R}$ of the complex plane. Indeed the proof of Theorem \ref{lem:Main-1} is based on results characterizing the decay of the semigroup $\exp(-tA)$ in terms of bounds on the norm of the resolvent $\|(A - {\rm i}\,\omega)^{-1}\|$ as $|\omega| \to \infty$ (see J.~Pr\"uss \cite{PrussJ-1984}, W.~Arendt \& C.J.K.~Batty \cite{ArendtW-Batty-1988}, Z.~Liu \& B.P.~Rao \cite{Liu-Rao-2005},
 A.~B\'atkai, K.J.~Engel, J.~Pr\"uss and R.~Schnaubelt \cite{Batkai-Engel-Pruss-2006}, C.J.K.~Batty \& Th.~Duyckaerts \cite{BattyCJK-Duyckaerts-2008}). In this paper we use the following version of these results due to A.~Borichev \& Y.~Tomilov \cite{BorichevA-Tomilov-2010}:

\begin{theorem} \label{lem:Borichev-Tomilov}
Let $S(t) := \exp(-tA)$ be a $C_{0}$-semigroup on a Hilbert space ${\Bbb H}$ generated by the operator $A$. Assume that ${\rm i}\,{\Bbb R}$ is contained in $\rho(A)$, the resolvent set of $A$, and denote $R(\lambda,A) := (A - \lambda I)^{-1}$ for $\lambda \in \rho(A)$. Then for $m > 0$ fixed, one has:
\begin{equation}
\sup_{\omega \in {\Bbb R}}{\|R({\rm i}\,\omega,A)\| \over (1 + |\omega|)^{1/m}} < \infty \iff
\sup_{ t\geq 0} \, (1 + t)^{m}\|S(t)A^{-1}\| < \infty.
\end{equation}
\end{theorem}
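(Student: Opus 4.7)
The equivalence is a Hilbert-space refinement of the Batty--Duyckaerts characterization of polynomial stability, and I would follow the argument of Borichev and Tomilov. Both implications rest on the Laplace-transform identification $R(\lambda,-A)x = \int_0^\infty e^{-\lambda t}S(t)x\,dt$ for $\mathrm{Re}\,\lambda>0$, which transfers information between the semigroup and the resolvent, together with the resolvent identity $R(i\omega,A) = A^{-1} + i\omega R(i\omega,A)A^{-1}$ which, when iterated, trades negative powers of $A$ for negative powers of $i\omega$.

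For the easier implication $(\Leftarrow)$, the hypothesis $\|S(t)A^{-1}\|\le C(1+t)^{-m}$ together with $i\mathbb{R}\subset\rho(A)$ forces $(S(t))_{t\ge 0}$ to be uniformly bounded. Analytic continuation of the Laplace transform to the imaginary axis then yields, for $n$ large enough,
$$R(i\omega,A)A^{-n}x \;=\; (-1)^{n}\int_0^\infty e^{i\omega t}\,\frac{t^{n-1}}{(n-1)!}\,S(t)A^{-1}x\,dt,$$
which, combined with the decay hypothesis and successive applications of the resolvent identity, produces a pointwise bound of the form $\|R(i\omega,A)\|\le C(1+|\omega|)^{1/m}$ with the sharp exponent.

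For the hard direction $(\Rightarrow)$, iterating the resolvent identity turns $\|R(i\omega,A)\|\le C(1+|\omega|)^{1/m}$ into $\|R(i\omega,A)A^{-n}\|\lesssim(1+|\omega|)^{1/m-n}$, which is square-integrable on $\mathbb{R}$ once $n$ is large enough. The vector-valued Plancherel theorem on $L^2(\mathbb{R};\mathbb{H})$, applied to the formal inverse representation
$$S(t)A^{-n}x \;=\; \frac{1}{2\pi}\int_{-\infty}^{+\infty}e^{i\omega t}\,R(i\omega,A)A^{-n}x\,d\omega$$
(justified by a contour shift from $\mathrm{Re}\,\lambda=\varepsilon>0$ to $\mathrm{Re}\,\lambda=0$ once the integrand is integrable), produces an $L^2$-in-$t$ bound on $\|S(t)A^{-n}x\|$. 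A bootstrap step then converts this $L^2$ estimate for $S(t)A^{-n}$ into a pointwise estimate for $S(t)A^{-(n-1)}$, and iterating down to $n=1$ delivers the desired pointwise decay $\|S(t)A^{-1}\|\lesssim t^{-m}$.

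The main obstacle is this last bootstrap, which must extract pointwise decay with the \emph{sharp} exponent $m$ (rather than $m-\epsilon$) from $L^{2}$-in-time information. Borichev and Tomilov accomplish this by combining Parseval on fractional powers $A^{-s}$ with a careful interpolation inequality that exploits the Hilbert-space inner-product structure, turning an integrated estimate on $s\mapsto\|S(t)A^{-s}x\|$ into a pointwise-in-$s$ one. This is precisely where the Hilbert setting is essential: in general Banach spaces the same strategy yields only polynomial decay with a logarithmic loss.
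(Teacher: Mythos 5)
The paper does not prove this statement: Theorem \ref{lem:Borichev-Tomilov} is imported verbatim from Borichev and Tomilov \cite{BorichevA-Tomilov-2010} and used as a black box, so there is no in-paper argument to compare yours against. Judged on its own terms, your outline correctly names the standard ingredients (Laplace/Fourier transform of the orbit, the resolvent identity $R({\rm i}\,\omega,A)=A^{-1}+{\rm i}\,\omega R({\rm i}\,\omega,A)A^{-1}$ to trade powers of $A$ for powers of $\omega$, Plancherel in the Hilbert-space setting), but it is a summary of a strategy rather than a proof, and it has genuine gaps.

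Concretely: (1) the displayed Laplace-transform identity is incorrect as written --- differentiating $(\lambda+A)^{-1}x=\int_0^\infty e^{-\lambda t}S(t)x\,dt$ in $\lambda$ produces powers of the resolvent, $R({\rm i}\,\omega,A)^{n}$, not $R({\rm i}\,\omega,A)A^{-n}$; moreover for $0<m\le 1$ the orbit $t\mapsto S(t)A^{-1}x$ need not be integrable, so the continuation to the imaginary axis cannot proceed this way and one must instead use the finite-window integration-by-parts argument of Batty--Duyckaerts. (2) The claim that decay of $\|S(t)A^{-1}\|$ together with ${\rm i}\,{\Bbb R}\subset\rho(A)$ \emph{forces} uniform boundedness of the semigroup is unjustified: it only gives $\sup_t\|S(t)x\|<\infty$ for $x$ in the dense set $D(A)$, which does not imply $\sup_t\|S(t)\|<\infty$. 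Boundedness of the semigroup is a standing hypothesis in the Borichev--Tomilov theorem (automatic in this paper because $A$ is m-accretive, so $S$ is a contraction semigroup) and must be assumed, not derived. (3) Most importantly, the hard direction is not closed: the route you sketch (Plancherel giving $\int_0^\infty\|S(t)A^{-n}x\|^2\,dt\lesssim\|x\|^2$ for $n$ large, then a bootstrap) yields via monotonicity only a rate of order $t^{-1/2}$ for $S(t)A^{-n}$, and interpolation from there gives a strictly weaker exponent than $m$ for $S(t)A^{-1}$. The passage to the \emph{sharp} exponent is precisely the content of the theorem, and your proposal explicitly defers that step to Borichev and Tomilov. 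As it stands, the proposal is a citation with commentary --- which is, in fairness, exactly how the paper itself treats the statement.
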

\medskip

The remainder of this paper is organized as follows. In Section \ref{sec:Abs} we gather a few notations and preliminary results concerning equation \eqref{eq:System-1}, and we prove Theorem \ref{lem:Main-1} by establishing a priori estimates for the resolvent of the generator of the semigroup associated to \eqref{eq:System-1}, in order to use the above Theorem \ref{lem:Borichev-Tomilov}. In Section \ref{sec:Wave-dim-1} we apply our abstract result to various wave equations in dimension one, and in Section \ref{sec:Wave-dim-2} we give a few examples of damped wave equations in higher dimensions when $\Omega := (0,L_{1})\times\cdots(0,L_{N})$, with $N \geq 2$, which show that depending on algebraic properties of the numbers $L_{i}^2/L_{j}^2$ the decay rate of the energy may be different.

\section{An abstract result}\label{sec:Abs}

As stated in the introduction, in what follows $H_{0}$ is an infinite dimensional, separable, complex Hilbert space, whose scalar product and norm are denoted by $(\cdot|\cdot)$ and $\|\cdot\|$, on which we consider a positive, densely defined selfadjoint operator $(L(D(L)))$ acting on $H_{0}$.
With the definition \eqref{eq:Def-H-1} we have $H_{1}\subset H_{0} = (H_{0})' \subset H_{-1}$, with dense and compact embeddings, and $L$ can be considered as a selfadjoint isomorphism (even an isometry) between the Hilbert spaces $H_{1}$ and $H_{-1}$.
We denote ${\Bbb R}^* := {\Bbb R}\setminus \{0\},\; {\Bbb N}^* := {\Bbb N}\setminus \{0\}$.

By an abuse of notations, ${\Bbb X}$ being a Banach space, when there is no risk of confusion we may write $f := (f_1, f_2) \in {\Bbb X}$ to mean that both functions $f_{1}$ and $f_{2}$ belong to ${\Bbb X}$ (rather than writing $f \in {\Bbb X} \times {\Bbb X}$ or $f \in {\Bbb X}^2$). Thus we shall write also $(f|g) := (f_{1}|g_{1}) + (f_{2}|g_{2})$ for $f = (f_{1},f_{2}) \in H_{0} \times H_{0}$ and
$g = (g_{1},g_{2}) \in H_{0} \times H_{0}$. Analogously $\|f\|$ will stand for $\left(\|f_{1}\|^2 + \|f_{2}\|^2\right)^{1/2}$.
\medskip

Recall that we have denoted by $(\lambda_{k})_{k \geq 1}$ the increasing sequence of distinct eigenvalues of $L$, each eigenvalue $\lambda_{k}$ having multiplicity $m_{k} \geq 1$. We shall denote by ${\Bbb P}_{k}$ the orthogonal projection of $H_{0}$ on the eigenspace $N(L - \lambda_{k}I)$. We denote by
\begin{equation}\label{eq:Def-F-k}
{\Bbb F}_{k} := \bigoplus_{j \geq k + 1} N(L - \lambda_{j} I) \, ,
\end{equation}
and
\begin{equation}\label{eq:Def-E-k}
{\Bbb E}_{k} := \left(N(L - \lambda_{k}I) \oplus {\Bbb F}_{k}\right)^\perp = \bigoplus_{1 \leq j \leq k - 1} N(L - \lambda_{j} I) \, ,
\end{equation}
(note that ${\Bbb E}_{1} = \left\{0\right\}$). We recall that ${\Bbb E}_{k}$ and ${\Bbb F}_{k}$ are invariant under the action of $L$, and obviously under that of $L - \lambda_{k}I$. More precisely
$$L({\Bbb E}_{k}) = (L - \lambda_{k}I)({\Bbb E}_{k}) = {\Bbb E}_{k}, $$
and also
$$L\left(D(L) \cap {\Bbb F}_{k}\right) =
(L - \lambda_{k}I)\left(D(L) \cap {\Bbb F}_{k}\right) = {\Bbb F}_{k}.$$

Next we consider a bounded linear operator $B$ satisfying the conditions \eqref{eq:Cnd-Op-B-1},
and we recall that one has a Cauchy-Schwarz type inequality for $B$, more precisely
\begin{equation}\label{eq:CS-ineq}
\left|\langle B\phi,\psi \rangle \right| \leq  \langle B\phi,\phi \rangle^{1/2}\,
 \langle B\psi,\psi \rangle ^{1/2}, \qquad\forall\, \phi, \psi\,\in H_{1}.
\end{equation}
which implies in particular that if $\langle B\phi,\phi \rangle = 0$ then we have $B\phi = 0.$

We shall consider the abstract damped wave equation of the form \eqref{eq:System-Abs}, and we introduce the Hilbert space
\begin{equation}
{\Bbb H} := H_{1} \times H_{0},
\end{equation}
corresponding to the energy space associated to equation \eqref{eq:System-Abs}, whose elements will be denoted by $u := (u_{0},u_{1})$ and whose norm is given by
$$
 \|u\|_{{\Bbb H}}^2  =  \|L^{1/2} u_{0}\|^2 + \|u_{1}\|^2.
$$
In order to solve and study \eqref{eq:System-Abs}, we define an unbounded operator $(A, D(A))$ acting on ${\Bbb H}$ by setting
\begin{equation}\label{eq:Def-A-1}
Au := (-u_{1}, L u_{0} + Bu_{1}),
\end{equation}
for $u \in D(A)$ defined to be
\begin{equation}\label{eq:Def-Dom-A-1}
D(A) :=
\left\{u \in {\Bbb H} \; ; \; u_{1} \in H_{1},\; L u_{0} + Bu_{1} \in H_{0} \right\}.
\end{equation}
Since for $u =(u_{0},u_{1}) \in D(A)$ we have
$$(Au|u)_{{\Bbb H}} = \langle Bu_{1},u_{1} \rangle \geq 0,$$
one can easily see that the operator $A$ is $m$-accretive on ${\Bbb H}$, that is for any $\lambda > 0$ and any $f \in {\Bbb H}$ there exists a unique $u \in D(A)$ such that
$$\lambda Au + u = f,\qquad\mbox{and}\quad \|u\| \leq \|f\|.$$
Thus $D(A)$ is dense in ${\Bbb H}$, and $A$ is a closed operator generating  a $C_0$-semigroup acting on ${\Bbb H}$, denoted by $S(t) := \exp(-tA)$ (see for instance K.~Yosida \cite{YosidaK-book}, chapter IX).

Then, writing the system \eqref{eq:System-Abs} as a Cauchy problem in ${\Bbb H}$:
$${d U \over dt} + AU(t) = 0 \quad \mbox{for }\, t > 0, \quad
U(0) = U^{0} := (u_{0},u_{1}) \in {\Bbb H},$$
we have $U(t) =(U_0(t),\, U_1(t))= \exp(-tA)U^{0}$, and the solution of \eqref{eq:System-Abs} is given by $u(t) = U_{0}(t)$, the first component of $U(t)$.

In order to study the behavior of $u(t)$, or rather that of $U(t)$, as $t \to +\infty$, we are going to show that the resolvent set of $A$ contains the imaginary axis ${\rm i}\,{\Bbb R}$ of the complex plane and that, under appropriate assumptions on the operators $L$ and $B$, the norm  $\|(A - {\rm i}\,\omega I)^{-1}\|$ has a polynomial growth as $|\omega| \to \infty$.

\begin{lemma}
The adjoint of $(A,D(A))$ is given by the operator $(A^*,D(A^*))$ where
\begin{equation}\label{eq:Dom-Adj-A}
D(A^*) = \left\{v \in {\Bbb H} \; : \; v_{1} \in H_{1},\;
-L v_{0} + Bv_{1} \in H_{0} \right\},
\end{equation}
and for $v = (v_{0},v_{1}) \in D(A^*)$ we have
\begin{equation}\label{eq:Def-Adj-A}
A^*v = \left(v_{1}, -L v_{0} + Bv_{1} \right).
\end{equation}
\end{lemma}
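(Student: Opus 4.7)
The plan is to verify the identity $(Au|v)_{\mathbb H}=(u|A^*v)_{\mathbb H}$ for $u\in D(A)$ and $v$ in the candidate domain on the right-hand side of \eqref{eq:Dom-Adj-A}, and then to show the reverse inclusion by testing against two well-chosen families of elements of $D(A)$. The recurring ingredient is the pair of identities, valid for $u,v\in H_1$,
\begin{equation*}
(L^{1/2}u\,|\,L^{1/2}v)_{H_0}=\overline{\langle Lu,v\rangle}=\langle Lv,u\rangle,\qquad \overline{\langle Bu,v\rangle}=\langle Bv,u\rangle,
\end{equation*}
which follow, respectively, from the self-adjointness of $L^{1/2}$ on $H_0$ (extended to the isometric isomorphism $L:H_1\to H_{-1}$) and from the assumption $B^*=B$ together with the convention $\langle f,\phi\rangle=(\phi|f)$ when $f\in H_0$.

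For the inclusion $\supseteq$, I take $v=(v_0,v_1)$ with $v_1\in H_1$ and $-Lv_0+Bv_1\in H_0$, set $w:=(v_1,-Lv_0+Bv_1)$, and expand $(Au|v)_{\mathbb H}$ using $u\in D(A)$. Because $Lu_0+Bu_1\in H_0$ and $v_1\in H_1$, the second term can be split as $(Lu_0+Bu_1|v_1)_{H_0}=(L^{1/2}u_0|L^{1/2}v_1)+\overline{\langle Bu_1,v_1\rangle}$. Using the identities above, the cross-terms $\langle Bv_1,u_1\rangle$ cancel, leaving $(Au|v)_{\mathbb H}=(L^{1/2}u_0|L^{1/2}v_1)+(u_1\,|\,-Lv_0+Bv_1)_{H_0}=(u|w)_{\mathbb H}$, so $v\in D(A^*)$ and $A^*v=w$.

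For the reverse inclusion, I assume $v\in D(A^*)$ with $A^*v=(w_0,w_1)\in\mathbb H$ and test against two families. First, plugging $u=(u_0,0)$ with $u_0\in D(L)$ (which lies in $D(A)$ since $Lu_0\in H_0$) gives $(Lu_0|v_1)_{H_0}=(L^{1/2}u_0|L^{1/2}w_0)$ for every $u_0\in D(L)$. Continuity of the left-hand side in the norm $\|L^{1/2}u_0\|$ (provided by the right-hand side) forces $v_1\in D(L^{1/2})=H_1$; then density of $L^{1/2}(D(L))$ in $H_0$ yields $w_0=v_1$. Second, for an arbitrary $u_1\in H_1$, I choose $u_0:=-L^{-1}Bu_1\in H_1$, so that $Lu_0+Bu_1=0\in H_0$ and $u=(u_0,u_1)\in D(A)$ with $Au=(-u_1,0)$. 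Unwinding $(Au|v)_{\mathbb H}=(u|A^*v)_{\mathbb H}$ with $w_0=v_1$ just established, and applying the two base identities, this reduces to $\langle -Lv_0+Bv_1,u_1\rangle=\langle w_1,u_1\rangle$ for every $u_1\in H_1$. Since $w_1\in H_0\subset H_{-1}$ and the duality pairing separates points of $H_{-1}$, this forces $-Lv_0+Bv_1=w_1\in H_0$, concluding the reverse inclusion and identifying $A^*v$ with the formula \eqref{eq:Def-Adj-A}.

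The step I expect to require most care is the reverse inclusion, specifically the argument that $v_1\in H_1$ must hold a priori (not assumed): one must exploit that $u_0\mapsto (Lu_0|v_1)_{H_0}$ being continuous in $\|L^{1/2}u_0\|$ on $D(L)$ forces $v_1\in D(L^{1/2})$, which is the spectral-theoretic characterization of $H_1$ and the only nontrivial functional-analytic input. Beyond that, the proof is bookkeeping with the duality convention $\langle f,\phi\rangle=(\phi|f)$ between $H_{-1}$ and $H_1$, where conjugations must be tracked carefully when moving self-adjoint operators across the pairing.
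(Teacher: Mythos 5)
Your proof is correct and follows the same basic strategy as the paper's: identify $A^*$ by testing $(Au|v)_{\Bbb H}$ against well-chosen elements of $D(A)$ and keeping track of the duality convention $\langle f,\phi\rangle=(\phi|f)$. The one genuine difference is in the second test family for the inclusion $D(A^*)\subseteq\{v: v_1\in H_1,\ -Lv_0+Bv_1\in H_0\}$: the paper takes $u=(0,u_1)$, which belongs to $D(A)$ only when $Bu_1\in H_0$ (recall $B$ maps $H_1$ into $H_{-1}$ only), so to conclude that $u_1\mapsto (Au|v)_{\Bbb H}$ extends to a continuous functional on $H_0$ one implicitly needs the set $\{u_1\in H_1:\ Bu_1\in H_0\}$ to be dense in $H_0$, a point the paper does not address; your choice $u_0:=-L^{-1}Bu_1$, which forces $Lu_0+Bu_1=0\in H_0$, makes the test admissible for every $u_1\in H_1$ and so avoids that issue entirely. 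You are also more explicit on two points the paper compresses: you prove the easy inclusion $\supseteq$ by a direct computation (the paper leaves it as ``easy to conclude''), and you justify $v_1\in H_1$ via the adjoint/spectral characterization of $D(L^{1/2})$ from the bound $|(Lu_0|v_1)|\le c\,\|L^{1/2}u_0\|$ on $u_0\in D(L)$, where the paper simply asserts the equivalence. So: same method, but your execution is slightly more careful, and what it buys is independence from any density property of $\{u_1\in H_1: Bu_1\in H_0\}$.
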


\proof Since $D(A)$ is dense in ${\Bbb H}$, the adjoint of $A$ can be defined. Recall that $v =(v_{0},v_{1})\in D(A^*)$ means that there exists a constant $c > 0$ (depending on $v$) such that
$$\forall\, u =(u_{0},u_{1})\in D(A), \qquad |(Au|v)_{{\Bbb H}}| \leq c\, \|u\|_{{\Bbb H}}.$$
To determine the domain of $A^*$, let $v \in D(A^*)$ be given, and consider first an element $u = (u_{0},0) \in D(A)$. Thus $Au = (0,L u_{0}) \in {\Bbb H}$ and
$$|(Au|v)_{{\Bbb H}}| = |(L u_{0}|v_{1})| \leq c\, \|u\|_{{\Bbb H}} = c\, \|L^{1/2} u_{0}\|.$$
This means that the linear form $u_{0} \mapsto (L u_{0}|v_{1})$ extends to a continuous linear form on $H_{1}$, and this is equivalent to say that $v_{1} \in H_{1}$, and that for $u = (u_{0},0)\in {\Bbb H}$ we have
$$(Au|v)_{{\Bbb H}} = \langle L u_{0}, v_{1} \rangle = \overline{(v_{1}|u_{0})_{H_{1}}}\,.$$

Now take $u = (0,u_{1}) \in D(A)$. Since, according to \eqref{eq:Cnd-Op-B-1}, $B^* = B$, we have
$$(Au|v)_{{\Bbb H}} = (- u_{1}|v_{0})_{H_{1}} + \langle Bu_{1}, v_{1}\rangle =
- (L^{1/2}u_{1}|L^{1/2}v_{0}) + \overline{\langle Bv_{1},u_{1} \rangle},$$
and thus
$$(Au|v)_{{\Bbb H}} =  \overline{\langle -L v_{0} + Bv_{1},u_{1} \rangle}$$
and since $v\in D(A^*)$ means that the mapping $u_{1} \mapsto (Au|v)$ extends to a continuous linear form on $H_{0}$, we conclude that
$$-L v_{0} + Bv_{1} \in H_{0},$$
and
$$(Au|v)_{{\Bbb H}} =  \overline{(-L v_{0} + Bv_{1} | u_{1})} = (u_{1}|-L v_{0} + Bv_{1}).$$
From these observations it is easy to conclude that in fact
$$A^\ast v = (v_{1}, -L v_{0} + Bv_{1}),$$
and that the domain of $A^*$ is precisely given by \eqref{eq:Dom-Adj-A}.
\qed
\medskip

We shall use the following classical results of S.~Banach which characterizes operators having a closed range (see for instance K.Â Yosida \cite{YosidaK-book}, chapter VII, section~5):
\begin{theorem}\label{lem:Banach-closed}
Let $(A,D(A))$ be a densely defined operator acting on a Hilbert space ${\Bbb H}$. Then
$$R(A)\,\mbox{ closed } \iff R(A^*)\, \mbox{ closed },$$
and either of the above properties is equivalent to either of the following equivalent equalities
$$R(A) = N(A^*)^\perp \iff R(A^*) = N(A)^\perp.$$
Moreover when $N(A) = N(A^*) = \{0\}$, the range $R(A)$ is closed if and only if there exist two constants $c_{1}, c_{2} > 0$ such that
\begin{equation}\label{eq:Inverse-A}
\forall\, u \in D(A)\quad \|u\| \leq c_{1}\, \|Au\|, \qquad
\forall\, v \in D(A^*)\quad \|v\| \leq c_{2}\, \|A^*v\|.
\end{equation}
\end{theorem}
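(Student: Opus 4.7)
The plan is to establish the classical orthogonality identities between kernels and ranges, deduce the closed--range equivalences from them, and finally extract the quantitative lower bounds via the inverse mapping theorem. Throughout I will assume (implicit in the statement, since $A^*$ is being treated as well defined and the final inequalities are stability estimates) that $(A,D(A))$ is closed, so that $A^{**}=A$ and the closed graph theorem applies to bijections of Hilbert spaces.

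The first step is to prove the orthogonality identities
\[
(R(A))^\perp = N(A^*), \qquad (R(A^*))^\perp = N(A).
\]
The first is a direct unpacking of the definition of $A^*$: an element $y \in {\Bbb H}$ lies in $(R(A))^\perp$ iff $(Ax,y) = 0$ for every $x \in D(A)$, which forces the map $x \mapsto (Ax,y)$ to be continuous (being identically zero), hence $y \in D(A^*)$ with $A^*y = 0$. The second identity uses $A^{**}=A$. Taking orthogonal complements yields $\overline{R(A)} = N(A^*)^\perp$ and $\overline{R(A^*)} = N(A)^\perp$; consequently $R(A) = N(A^*)^\perp$ is equivalent to $R(A)$ being closed, and similarly for $A^*$. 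This already proves two of the three advertised equivalences.

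The core of the theorem is the equivalence ``$R(A)$ closed iff $R(A^*)$ closed''. I would follow the classical reduction to the injective case by passing to quotients: consider the injective operator $\tilde A : D(A)/N(A) \to \overline{R(A)}$ induced by $A$, and note that under the identification $(D(A)/N(A))^* \simeq \overline{R(A^*)}$ (which relies on the orthogonality identities above) the adjoint of $\tilde A$ is essentially $A^*$ restricted to $\overline{R(A^*)}$. If $R(A)$ is closed, then $\tilde A$ is a closed bijection between Hilbert spaces, hence by the open mapping theorem (applied via the closed graph theorem to $\tilde A^{-1}$) it has a bounded inverse; this yields a quantitative lower bound $\|\tilde x\|\leq c\,\|\tilde A\tilde x\|$. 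Dualizing this bound forces $\tilde A^*$ to be surjective with bounded inverse, and hence $R(A^*)$ to be closed. The symmetric argument handles the reverse implication. This dualization step is the main technical obstacle, since one must juggle the two unbounded domains $D(A),D(A^*)$ and the two (a priori non-closed) ranges simultaneously; I would carry it out by working with the graphs of $A$ and $A^*$ equipped with the Hilbert graph norms, where both the open mapping theorem and the duality between $A$ and $A^*$ are straightforward to apply.

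For the last assertion, assume $N(A) = N(A^*) = \{0\}$. The orthogonality identities then give $\overline{R(A)} = \overline{R(A^*)} = {\Bbb H}$. If $R(A)$ and $R(A^*)$ are closed they equal ${\Bbb H}$, so $A : D(A) \to {\Bbb H}$ and $A^* : D(A^*) \to {\Bbb H}$ are closed bijections; applying the inverse mapping theorem to each graph produces bounded inverses $A^{-1}$ and $(A^*)^{-1}$, which is precisely \eqref{eq:Inverse-A}. Conversely, suppose the two inequalities in \eqref{eq:Inverse-A} hold. If $Ax_n \to y$ in ${\Bbb H}$, then $\|x_n - x_m\| \leq c_1\|Ax_n - Ax_m\|$ shows $(x_n)$ is Cauchy, so $x_n \to x$, and closedness of $A$ gives $x \in D(A)$ with $Ax = y$; hence $R(A)$ is closed. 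The identical argument for $A^*$ closes the equivalence.
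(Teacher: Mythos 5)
The paper does not actually prove this statement: it is quoted as a classical theorem of S.~Banach, with a pointer to Yosida's book (Chapter VII, Section 5), so there is no internal proof to compare yours against. Your argument is the standard textbook proof and is essentially sound. Two remarks. First, you are right that closedness of $(A,D(A))$ must be read into the hypotheses: as printed the theorem only says ``densely defined'', but without closedness one has $A^{**}\neq A$ in general and the implication from \eqref{eq:Inverse-A} back to closedness of $R(A)$ fails; in the paper's applications $A$ is a semigroup generator and $L_{\omega}$ is bounded, so the assumption is harmless. Second, the middle step --- $R(A)$ closed implies $R(A^*)$ closed --- is the only place where your write-up is genuinely incomplete: the identification $(D(A)/N(A))^{*}\simeq \overline{R(A^*)}$ is only correct if the quotient carries the ${\Bbb H}$-norm rather than the graph norm, and ``dualizing the bound'' hides the real work. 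The cleanest way to close that gap is the usual one: from closedness of $R(A)$ and the open mapping theorem obtain $\|Au\|\geq c\,\mathrm{dist}(u,N(A))$ for $u\in D(A)$; then for $f\in N(A)^{\perp}$ the functional $Au\mapsto (u|f)$ is well defined and bounded on $R(A)$, extends to ${\Bbb H}$, and Riesz representation produces $v\in D(A^*)$ with $A^{*}v=f$, which proves $R(A^{*})=N(A)^{\perp}$ and in particular that $R(A^{*})$ is closed; the converse is symmetric via $A^{**}=A$. The orthogonality identities and the final equivalence with the two a priori estimates are correct as you wrote them.
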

\medskip

In the following lemma we show that $0 \in \rho(A)$, that is that the operator $A$ defined by \eqref{eq:Def-A-1} has a bounded inverse:
\begin{lemma}\label{lem:A-invertible}
Denote by $N(A)$ the kernel of the operator $A$ defined by \eqref{eq:Def-A-1}--\eqref{eq:Def-Dom-A-1}, and by $R(A)$ its range. Then we have $N(A) = \{0\} = N(A^*)$ and $R(A)$, as well as $R(A^*)$, are closed.
In particular $A : D(A) \longrightarrow {\Bbb H}$ is one-to-one and its inverse is continuous on ${\Bbb H}$.
\end{lemma}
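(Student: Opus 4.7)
The plan is to verify the four conclusions by directly inverting $A$; closedness of the ranges will then fall out of Theorem~\ref{lem:Banach-closed} combined with surjectivity of $A$.

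\medskip

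\textbf{Step 1 (Kernels).} If $u = (u_{0},u_{1}) \in D(A)$ satisfies $Au = 0$, then the first component gives $u_{1} = 0$, and the second component reduces to $L u_{0} = 0$. Because $\lambda_{1} > 0$, the operator $L$ is injective on $D(L)$, so $u_{0} = 0$. The identical argument applied to $A^{*}v = (v_{1}, -L v_{0} + B v_{1}) = 0$ (using the description \eqref{eq:Def-Adj-A} of $A^{*}$) yields $v_{1} = 0$ and then $L v_{0} = 0$, hence $v = 0$. Thus $N(A) = N(A^{*}) = \{0\}$.

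\medskip

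\textbf{Step 2 (Surjectivity of $A$).} Given $f = (f_{0},f_{1}) \in {\Bbb H}$, the equation $Au = f$ reads $u_{1} = -f_{0}$ together with $L u_{0} + B u_{1} = f_{1}$, i.e.\
\[
L u_{0} = f_{1} + B f_{0}\quad\text{in }H_{-1}.
\]
The first assignment produces $u_{1} \in H_{1}$ (since $f_{0} \in H_{1}$). For the second, observe that $f_{1} \in H_{0} \hookrightarrow H_{-1}$ and $B f_{0} \in H_{-1}$ by \eqref{eq:Cnd-Op-B-1}, so the right-hand side lies in $H_{-1}$. Since $L: H_{1} \longrightarrow H_{-1}$ is an isometric isomorphism (as recalled at the start of this section), there is a unique $u_{0} \in H_{1}$ solving it, and by construction $L u_{0} + B u_{1} = f_{1} \in H_{0}$, so $u \in D(A)$.

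\medskip

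\textbf{Step 3 (Continuity of $A^{-1}$ and closedness of the ranges).} Testing $L u_{0} = f_{1} + B f_{0}$ against $u_{0}$, and using the Poincar\'e-type inequality $\|u_{0}\| \leq \lambda_{1}^{-1/2}\|L^{1/2}u_{0}\|$ together with the boundedness of $B: H_{1} \to H_{-1}$, one obtains
\[
\|L^{1/2}u_{0}\|^{2} = \langle f_{1},u_{0}\rangle + \langle Bf_{0},u_{0}\rangle \leq \bigl(\lambda_{1}^{-1/2}\|f_{1}\| + \|B\|_{H_{1}\to H_{-1}}\|L^{1/2}f_{0}\|\bigr)\|L^{1/2}u_{0}\|,
\]
which together with $\|u_{1}\| = \|f_{0}\| \leq \lambda_{1}^{-1/2}\|L^{1/2}f_{0}\|$ yields an estimate of the form $\|u\|_{{\Bbb H}} \leq c\,\|f\|_{{\Bbb H}}$. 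Hence $R(A) = {\Bbb H}$, which is trivially closed, and Theorem \ref{lem:Banach-closed} then gives that $R(A^{*})$ is closed as well (in fact $R(A^{*}) = N(A)^{\perp} = {\Bbb H}$).

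\medskip

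There is no real conceptual difficulty here; the only point to be careful with is the bookkeeping between the three spaces $H_{-1}, H_{0}, H_{1}$ and the duality pairing $\langle\cdot,\cdot\rangle$ with the sign convention $(\phi|f) = \langle f,\phi\rangle$ fixed in the introduction. The single piece of structural information that does the work is that $L$ is an isomorphism $H_{1} \to H_{-1}$, which converts the resolvent equation at $\lambda = 0$ into an elliptic-type problem uniquely solvable in $H_{1}$.
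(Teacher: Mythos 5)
Your proof is correct, and while its computational heart is the same as the paper's, the logical route is noticeably different. The shared core is the reduction: set $u_{1} := -f_{0}$ and solve $L u_{0} = f_{1} + Bf_{0}$, using that $L$ is an isomorphism from $H_{1}$ onto $H_{-1}$ and that $B : H_{1} \into H_{-1}$ is bounded. The paper, however, only verifies that $R(A)$ is \emph{closed}, by taking a sequence $Au_{n} \to f$ in ${\Bbb H}$ and producing a preimage of the limit, and then leans on Banach's closed range theorem (Theorem \ref{lem:Banach-closed}) twice: once to get $R(A) = N(A^{*})^{\perp} = {\Bbb H}$, and once more, via \eqref{eq:Inverse-A}, to conclude that $A^{-1}$ is bounded — no explicit estimate is ever written. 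You instead solve $Au = f$ for \emph{every} $f \in {\Bbb H}$, so surjectivity (hence closedness of $R(A)$) is immediate, and you obtain the continuity of $A^{-1}$ from an explicit a priori bound $\|u\|_{{\Bbb H}} \leq c\, \|f\|_{{\Bbb H}}$ with $c$ controlled by $\lambda_{1}$ and $\|B\|_{H_{1}\to H_{-1}}$; Banach's theorem is then only used for the closedness of $R(A^{*})$, and even that could be dispensed with by running your Steps 2--3 on $A^{*}$, whose structure is identical. Your version is thus more constructive and quantitative (it even gives an explicit constant for $\|A^{-1}\|$), at the cost of a slightly longer computation, whereas the paper trades the estimate for the abstract closed-range machinery. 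One cosmetic remark: in Step 3 the displayed identity should carry moduli or real parts since the spaces are complex, e.g.\ $\|L^{1/2}u_{0}\|^{2} = |\langle f_{1} + Bf_{0}, u_{0}\rangle| \leq \cdots$; this does not affect the argument.
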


\proof It is clear that $N(A) = N(A^*) = \{0\}$. On the other hand, thanks to Banach's theorem \ref{lem:Banach-closed}, we have only to show that $R(A)$ is closed. For a sequence $u_{n} = (u_{0n},\, u_{1n}) \in D(A)$ such that $f_{n} = (f_{0n},\, f_{1n}):= Au_{n} \to f =  (f_{0},\, f_{1}) \in {\Bbb H}$, we have to show that there exists $u = (u_{0},\, u_{1})\in D(A)$ for which $f = Au$. Since $u_{1n} = - f_{0n} \to -f_{0}$ in $H_{1}$, setting $u_{1} := -f_{0}$, due to the fact that $B : H_{1} \into H_{-1}$ is continuous, we have that
$$L u_{0n} = f_{1n} - Bu_{1n} \to f_{1} - Bu_{1}\quad\mbox{in }\, H_{-1},  $$
and therefore, denoting by $u_{0} \in H_{1}$ the unique solution of
$$L u_{0} = f_{1} - Bu_{1},$$
we have that $u = (u_{0},u_{1}) \in D(A)$ and that $Au = f$. This proves that the range of $A$, as well as that of $A^*$, are closed. Thus we have $R(A) = N(A^*)^\perp$ and $R(A^*) = N(A)^\perp$. Since $N(A) = N(A^*) = \{0\}$, by property \eqref{eq:Inverse-A} of Theorem \ref{lem:Banach-closed}  there exist two constants $c_{1}, c_{2} > 0$ such that
$$\forall\, u \in D(A),\quad \|Au\| \leq c_{1}\, \|u\|, \qquad
\forall\, v \in D(A^*),\quad \|A^*v\| \leq c_{2}\, \|v\|.
$$
This means that $A^{-1} : {\Bbb H} \longrightarrow D(A)$ is continuous, and naturally the same is true of $(A^*)^{-1} : {\Bbb H} \longrightarrow D(A^*)$.
\qed

\medskip
Next we show that a certain perturbation of $L$, which appears in the study of the resolvent of $A$, is invertible.

\begin{proposition}\label{lem:Estim-1} {\bf (Main estimates)}.
Assume that $B$ satisfies \eqref{eq:Cnd-Op-B-1}, and that for any fixed $k \geq 1$ condition \eqref{eq:Cnd-Op-B-2} is satisfied. Let $\omega \in {\Bbb R}$, and for $j\geq 1$ and $\omega^2 \neq \lambda_{j}$ denote
\begin{equation}\label{eq:Def-alpha-k}
\alpha_{j}(\omega) := {\lambda_{j} \over |\omega^2 - \lambda_{j}|}.
\end{equation}
Then the operator $L_{\omega} : H_{1} \into H_{-1}$ defined by
\begin{equation}\label{eq:L-omega}
L_{\omega}u := Lu - {\rm i}\, \omega Bu -\omega^2 u.
\end{equation} 
has a bounded inverse, and
$$\|L_{\omega}^{-1}\|_{H_{-1} \to H_{1}} \leq c(\omega)$$
where  the constant $c(\omega)$ is given by 
\begin{equation}\label{eq:Def-c-omega}
c(\omega) := \left({\beta_{k}\lambda_{k} \over |\omega|} + (1 +\beta_{k}\lambda_{k}) (\alpha_{k-1}(\omega) + \alpha_{k+1}(\omega))^2(1 + |\omega|)\right)c^*\,,
\end{equation}
for $\omega$ such that $\lambda_{k-1} < \omega^2 < \lambda_{k+1}$, with $c^* := 16 (1+\|B\|)^2$, and $\|B\| := \|B\|_{H_{1}\to H_{-1}}$.
\end{proposition}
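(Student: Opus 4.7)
The plan is to prove an a priori estimate $\|u\|_{H_{1}} \leq c(\omega)\|f\|_{H_{-1}}$ for every solution $u \in H_{1}$ of $L_{\omega}u = f$, and to observe that the analogous estimate for $L_{\omega}^{*}$ follows at once since an easy calculation using $L^{*} = L$ and $B^{*} = B$ identifies $L_{\omega}^{*}$ with $L_{-\omega}$. Both estimates in particular show $N(L_{\omega}) = N(L_{\omega}^{*}) = \{0\}$, so Banach's closed range theorem (Theorem~\ref{lem:Banach-closed}) yields bijectivity of $L_{\omega} : H_{1} \to H_{-1}$, and the bound on $\|L_{\omega}^{-1}\|_{H_{-1}\to H_{1}}$ is precisely the a priori constant $c(\omega)$. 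Throughout I take $\omega \neq 0$.

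I test $L_{\omega}u = f$ against $u$ in the duality pairing. Since both $\langle Lu, u\rangle = \|u\|_{H_{1}}^{2}$ and $\omega^{2}\|u\|^{2}$ are real while $\langle Bu, u\rangle \geq 0$, taking imaginary parts gives the dissipation identity $|\omega|\langle Bu, u\rangle \leq \|f\|_{H_{-1}}\|u\|_{H_{1}}$. The crucial companion of this identity is a bound on $\|Bu\|_{H_{-1}}$: applying the form Cauchy--Schwarz inequality \eqref{eq:CS-ineq} together with the boundedness of $B$ to $|\langle Bu, v\rangle|$ for $v \in H_{1}$ of norm one gives
\begin{equation*}
\|Bu\|_{H_{-1}}^{2} \leq \|B\|\,\langle Bu, u\rangle.
\end{equation*}
This refinement is essential: a naive $\|Bu\|_{H_{-1}} \leq \|B\|\|u\|_{H_{1}}$ would leave a term $\|u\|_{H_{1}}^{2}$ on the right-hand side of the subsequent estimates with a coefficient of order $|\omega|^{2}$ that the later absorption cannot handle.

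With $k$ chosen so that $\lambda_{k-1} < \omega^{2} < \lambda_{k+1}$, I decompose $u = \phi + \psi$ with $\phi := {\Bbb P}_{k}u$ and $\psi := u - \phi \in {\Bbb E}_{k} \oplus {\Bbb F}_{k}$. Since the latter subspace is $L$-invariant, projecting the equation yields $(L - \omega^{2})\psi = (I - {\Bbb P}_{k})(f + {\rm i}\,\omega Bu)$, and a spectral decomposition in the eigenbasis of $L$ shows that on this subspace $(L-\omega^{2})^{-1}$ maps $H_{-1}$ into $H_{1}$ with norm at most $\alpha_{k-1}(\omega) + \alpha_{k+1}(\omega)$. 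Indeed, the function $\lambda \mapsto \lambda/|\lambda - \omega^{2}|$ is monotone on each side of $\omega^{2}$, so its maximum over $\{\lambda_{j}\}_{j \neq k}$ is attained at $\lambda_{k-1}$ or $\lambda_{k+1}$. Inserting the bound on $\|Bu\|_{H_{-1}}$ together with the dissipation identity controls $\|\psi\|_{H_{1}}^{2}$ by $2(\alpha_{k-1}(\omega)+\alpha_{k+1}(\omega))^{2}(\|f\|_{H_{-1}}^{2} + |\omega|\|B\|\|f\|_{H_{-1}}\|u\|_{H_{1}})$. For the eigenspace component $\phi$, the non-degeneracy condition \eqref{eq:Cnd-Op-B-2} gives $\|\phi\|^{2} \leq \beta_{k}\langle B\phi, \phi\rangle$, while writing $\phi = u - \psi$ and applying \eqref{eq:CS-ineq} once more yields $\langle B\phi, \phi\rangle \leq 2\langle Bu, u\rangle + 2\langle B\psi, \psi\rangle$, controlled in turn by the dissipation identity and by $\|B\|\|\psi\|_{H_{1}}^{2}$.

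Summing the bounds on $\|\phi\|_{H_{1}}^{2} = \lambda_{k}\|\phi\|^{2}$ and $\|\psi\|_{H_{1}}^{2}$ produces a quadratic inequality $\|u\|_{H_{1}}^{2} \leq C_{1}\|f\|_{H_{-1}}^{2} + C_{2}\|f\|_{H_{-1}}\|u\|_{H_{1}}$ whose constants $C_{1}, C_{2}$ match, up to harmless numerical factors absorbed in $c^{*}$, the two summands in \eqref{eq:Def-c-omega}: the $\beta_{k}\lambda_{k}/|\omega|$ piece arises from controlling $\phi$ through the dissipation, while $(1 + \beta_{k}\lambda_{k})(\alpha_{k-1}+\alpha_{k+1})^{2}(1+|\omega|)$ comes from the spectral estimate on $\psi$. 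An AM--GM absorption of the cross term then yields $\|u\|_{H_{1}} \leq c(\omega)\|f\|_{H_{-1}}$. The main difficulty, as stressed above, is the linearisation of $\|Bu\|_{H_{-1}}$ via the form Cauchy--Schwarz inequality; everything else is careful bookkeeping of spectral gaps.
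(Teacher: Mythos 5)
Your proof is correct, and it follows the same overall strategy as the paper (a dissipation identity from the imaginary part, a spectral decomposition adapted to the window $\lambda_{k-1}<\omega^2<\lambda_{k+1}$, the non-degeneracy condition \eqref{eq:Cnd-Op-B-2} for the resonant component, and a final absorption), but the execution differs in two genuine ways. First, the paper conjugates by $L^{1/2}$ and works with $\LL_{\omega}=I-\omega^{2}L^{-1}-{\rm i}\,\omega\BB$ on $H_{0}$, then splits the unknown into \emph{three} pieces $v+t\phi+z$ with $v\in{\Bbb E}_{k}$, $z\in{\Bbb F}_{k}$, and tests the equation against $v$, $z$ and $\phi$ separately; you stay in the $H_{1}$--$H_{-1}$ duality, split into only two pieces $\phi={\Bbb P}_{k}u$ and $\psi=u-\phi$, and replace the two testings against $v$ and $z$ by a single resolvent bound for $(L-\omega^{2})^{-1}$ on the non-resonant subspace, which is indeed $\max(\alpha_{k-1},\alpha_{k+1})\le\alpha_{k-1}+\alpha_{k+1}$ as a map $H_{-1}\to H_{1}$; this is a mild but clean streamlining (it requires, as you implicitly use, that ${\Bbb P}_{k}$ extends to a norm-one projection on $H_{-1}$ commuting with $L$, which is immediate for a spectral projection). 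Second, and more substantially, your treatment of the resonant component avoids returning to the equation: instead of the paper's imaginary-part estimate obtained by testing against $\phi$ (which uses $(\BB\phi|\phi)\ge 1/(\beta_{k}\lambda_{k})$), you bound $\|\phi\|^{2}\le\beta_{k}\langle B\phi,\phi\rangle$ directly and then control $\langle B\phi,\phi\rangle\le 2\langle Bu,u\rangle+2\langle B\psi,\psi\rangle$ via the form Cauchy--Schwarz inequality \eqref{eq:CS-ineq}; combined with your linearisation $\|Bu\|_{H_{-1}}^{2}\le\|B\|\langle Bu,u\rangle$ (which plays exactly the role of the paper's estimate $|(\BB f|v)|\le(\BB f|f)^{1/2}(\BB v|v)^{1/2}$), this reproduces both summands of \eqref{eq:Def-c-omega} with the same powers of $|\omega|$, $\beta_{k}\lambda_{k}$ and $\alpha_{k\pm1}$, and your quadratic-absorption step is the analogue of the paper's Young inequality. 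The only points treated more lightly than in the paper are the trivial cases ($\omega=0$, where $L_{\omega}=L$, and the small-frequency regime handled in the paper's Step 1), and the passage from the a priori bound for $L_{\omega}$ and $L_{\omega}^{*}=L_{-\omega}$ to surjectivity via the closed range theorem, which you state correctly and which matches the paper's use of Theorem \ref{lem:Banach-closed}; none of these is a gap.
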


\proof Note that according to \eqref{eq:Def-alpha-k}, the constants $\alpha_{k-1}(\omega)$ and $\alpha_{k+1}(\omega)$ are well-defined whenever $\lambda_{k-1} < \omega < \lambda_{k+1}$.

For $\omega \in {\Bbb R}$ fixed, and any given $g\in H_{-1}$ we have to show that there exists a unique $u_{0}\in H_{1}$  solution of
\begin{equation}\label{eq:u0}
L u_{0} - {\rm i}\, \omega\, Bu_{0} -\omega^2u_{0} = g\, ,
\end{equation}
and there exists a constant $c(\omega) >0$ such that
\begin{equation}
\label{eq:Estim-L-omega}
\|L^{1/2} u_{0}\| \leq c(\omega)\, \|g\|_{H_{-1}}\, .
\end{equation}
Note that $L_{\omega}$ is a bounded operator from $H_{1}$ into $H_{-1}$ and that $(L_{\omega})^* = L_{-\omega} $. First we show that $N(L_{\omega}) = \{0\}$. If $\omega = 0$, then we know that $L_{0} = L$ and by assumption $N(L)=\{0\}$. If $\omega \neq 0,$ and if $u \in H_{1}$ satisfies $L_{\omega} u = 0$, we have
$$-\omega \langle Bu, u \rangle = \Im\,\langle Lu - {\rm i}\,\omega Bu - \omega^2 u, u\rangle = 0.$$
Since $\omega \neq 0$, this yields $\langle Bu, u \rangle = 0$ and, as remarked above after the Cauchy--Schwarz inequality \eqref{eq:CS-ineq}, the latter implies that $Bu = 0$ and thus $Lu - \omega^2 u = 0$. If $u$ were not equal to zero, this would imply that $u \in D(L)$ and that $\omega^2$ is an eigenvalue of $L$, say $\omega^2 = \lambda_{k}$ for some integer $k \geq 1$, that is $u \in N(L- \lambda_{k}I) \setminus \{0\}$. However we have $\langle Bu, u \rangle = 0$ and this in contradiction with the assumption \eqref{eq:Cnd-Op-B-2}. Therefore $u = 0$ and $N(L_{\omega}) = \{0\}$.
\medskip

Next we show that $R(L_{\omega})$ is closed, that is, according to property \eqref{eq:Inverse-A} of Banach's theorem \ref{lem:Banach-closed}, there exists a constant $c(\omega) > 0$ such that \eqref{eq:Estim-L-omega} is satisfied.

To this end, $\omega \in {\Bbb R}$ being fixed, we define two bounded operators $\BB$ and $\LL_{\omega}$ acting in $H_{0}$ by
\begin{eqnarray}
&\BB &:= L^{-1/2}BL^{-1/2} \label{eq:Def-BB} \\
&\LL_{\omega} &:= I - \omega^2 L^{-1} - {\rm i}\, \omega \BB = 
I - {\rm i}\,\omega L^{-1/2}(B - {\rm i}\,\omega I)L^{-1/2} , \label{eq:Def-LL}
\end{eqnarray}
and we note that 
$$L_{\omega} = L^{1/2}\left(I - {\rm i}\,\omega L^{-1/2}(B - {\rm i}\,\omega I)L^{-1/2}\right)L^{1/2} = L^{1/2}\LL_{\omega}L^{1/2}.$$
Since $L^{1/2}$ is an isometry between $H_{1}$ and $H_{0}$, and also between $H_{0}$ and $H_{-1}$, in order to see that $L_{\omega}^{-1}$ is a bounded operator mapping $H_{-1}$ into $H_{1}$, with a norm estimated by a certain constant $c(\omega)$, it is sufficient to show that the operator $\LL_{\omega}$, as a mapping on $H_{0}$, has an inverse and that 
$c(\omega)$ being defined in \eqref{eq:Def-c-omega} we have
\begin{equation}\label{eq:Estim-LL-omega}
\|\LL_{\omega}^{-1}\| \leq c(\omega).
\end{equation}

We observe also that $\BB : H_{0} \into H_{0}$ is a bounded, selfadjoint and nonnegative operator and thus, as recalled in \eqref{eq:CS-ineq}, for any $f,g \in H_{0}$ we have the Cauchy-Schwarz inequality
\begin{equation}\label{eq:CS-BB}
|(\BB f|g)| \leq (\BB f|f)^{1/2} (\BB g|g)^{1/2}.
\end{equation}
Now consider $f,g \in H_{0}$ such that $\|g\| \leq 1$ and
\begin{equation}\label{eq:LL-f}
\LL_{\omega}f = f - \omega^2L^{-1}f -{\rm i}\,\omega \BB f = g.
\end{equation}
We split the proof into two steps, according to whether $\omega^2$ is smaller or larger than $\lambda_{1}/2$.
\medskip

\noindent{\bf Step 1.} Assume first that $\omega^2 \leq \lambda_{1}/2$. Using the fact that
$$(L^{-1}f|f) \leq {1 \over \lambda_{1}}\|f\|^2,$$
upon multiplying \eqref{eq:LL-f} by $f$, and then taking the real part of the resulting equality, one sees that
\begin{equation}\label{eq:Estim-u0-1}
\|f\| \leq {\lambda_{1} \over \lambda_{1} - \omega^2}.
\end{equation}
Thus if $\omega^2 < \lambda_{1}$ one has $\|\LL_{\omega}^{-1}\| \leq \lambda_{1}/(\lambda_{1} -\omega^2)$, and more precisely $\|\LL_{\omega}^{-1}\| \leq 2$ if $\omega^2 \leq \lambda_{1}/2$.
\medskip

\noindent{\bf Step 2.} Now assume that for some integer $k \geq 1$ we have
\begin{equation}\label{eq:omega2-k}
\lambda_{k-1} <  \omega^2 < \lambda_{k+1} .
\end{equation}
(If $k=1$ by convention we set $\lambda_{0} :=0$). Multiplying, in the sense of $H_{0}$, equation \eqref{eq:LL-f} by $f$ and taking the imaginary part of the result yields
\begin{equation}\label{eq:BB-f}
(\BB f|f) \leq |\omega|^{-1}\, \|f\|.
\end{equation}
This first estimate is indeed not sufficient to obtain a bound on $f$, since the operator $\BB$ may be neither strictly nor uniformly coercive. However, as we shall see in a moment, this crude estimate is a crucial ingredient to obtain our result. 
\medskip

We begin by decomposing $f$ into three parts as follows: there exist a unique $t \in {\Bbb C}$ and  $\phi \in N(L - \lambda_{k}I)$, with $\|\phi\| = 1$, such that
$$f = v + t \phi +  z,\quad\mbox{where }\, v \in {\Bbb E}_{k},\,
\mbox{ and }\, z \in {\Bbb F}_{k}\cap H_{1}.$$
(Recall that we have ${\Bbb E}_{1} = \{0\}$; also when $\lambda_{k}$ has multiplicity $m_{k} \geq 2$, then $\phi$ may depend also on $g$, but in any case its norm in  $H_{1}$ is $\sqrt{\lambda_{k}}$). With these notations, equation \eqref{eq:LL-f} reads
\begin{equation}\label{eq:f-Decomp}
v - \omega^{2}L^{-1}v + z - \omega^{2}L^{-1}z + {\lambda_{k} - \omega^2 \over \lambda_{k}} \,t\phi = {\rm i}\,\omega\,\BB f + g.
\end{equation}
When $k=1$, by convention we have $v=0$, while when $k \geq 2$ we may multiply the above equation by $-v$, and using the fact that for $v\in {\Bbb E}_{k}$ we have 
$$(L^{-1}v|v) \geq {1 \over \lambda_{k-1}}\, \|v\|^2,$$ 
we deduce that
$${\omega^2 - \lambda_{k-1} \over \lambda_{k-1}}\, \|v\|^2 \leq \|v\| + |\omega| \, |(\BB f| v)|.$$
Using \eqref{eq:CS-BB} and \eqref{eq:BB-f} we have 
$$|(\BB f| v)| \leq (\BB f|f)^{1/2} (\BB v|v)^{1/2} \leq |\omega|^{-1/2}\|f\|^{1/2}\cdot \|\BB\|^{1/2}\cdot \|v\|,$$
so that since $\|\BB\|\leq \|B\| := \|B\|_{H_{1} \to H_{-1}}$, we get finally
\begin{equation}\label{eq:Estim-v}
\|v\| \leq \alpha_{k-1}(\omega)\left(1 + |\omega|^{1/2}\|B\|^{1/2}\|f\|^{1/2}\right).
\end{equation}
Analogously, multiplying \eqref{eq:f-Decomp} by $z$ and using the fact that 
$$(L^{-1}z|z)\leq {1 \over \lambda_{k+1}}\, \|z\|^2,$$ 
we get
$${\lambda_{k+1} - \omega^2 \over \lambda_{k+1}}\, \|z\|^2 \leq \|z\| + |\omega| \, |(\BB f| z)|,$$
and, proceeding as above, we deduce that
\begin{equation}\label{eq:Estim-z}
\|z\| \leq \alpha_{k+1}(\omega)\left(1 + |\omega|^{1/2}\|B\|^{1/2}\|f\|^{1/2}\right).
\end{equation}
Writing \eqref{eq:f-Decomp} in the form
$$(I - \omega^{2}L^{-1})(v + z) + {\lambda_{k} - \omega^2 \over \lambda_{k}}\,t\,\phi -{\rm i}\,t\,\omega\,\BB \phi = g + {\rm i}\,\omega\,\BB (v+z) ,$$
we multiply this equation by $\phi$ and we take the imaginary part of the resulting equality to obtain 
\begin{eqnarray} 
&|t|\, |\omega| (\BB \phi|\phi) &\leq 1 +  |\omega|\cdot|(\BB \phi|v+z)| \nonumber\\
&&\leq
\left(1 + |\omega|(\BB\phi|\phi)^{1/2} \|\BB\|^{1/2}(\|v\| + \|z\|)\right).
\label{eq:Estim-t2-B}
\end{eqnarray}
(Here we have used the fact that $((I - \omega^{2}L^{-1})(v + z)|\phi) = 0$ since $v+z \in N(L - \lambda_{k} I)^\perp$).  Now we have 
$$(\BB \phi|\phi) = \langle BL^{-1/2}\phi,L^{-1/2}\phi\rangle = {1 \over \lambda_{k}}\langle B \phi, \phi\rangle \geq {1 \over \beta_{k}\lambda_{k}}\, ,$$
and thus the above estimate \eqref{eq:Estim-t2-B} yields finally
\begin{equation}\label{eq:Estim-t-phi}
|t| \leq {\lambda_{k} \beta_{k}\over |\omega|} + \|B\|^{1/2} (\beta_{k}\lambda_{k})^{1/2}(\|v\| + \|z\|).
\end{equation}
Using this together with \eqref{eq:Estim-v} and \eqref{eq:Estim-z}, we infer that
$$\|f\| \leq {\lambda_{k} \beta_{k}\over |\omega|} + (1 + (\|B\|\beta_{k}\lambda_{k})^{1/2})(\alpha_{k-1}(\omega) + \alpha_{k+1}(\omega))(1+(|\omega|\|B\|\|f\|)^{1/2}). $$
From this, upon using Young's inequality $\alpha\beta \leq \epsilon\alpha^2/2 + \epsilon^{-1}\beta^2/2$ on the right hand side, with $\alpha := \|f\|^{1/2}$ and $\beta$ the terms which are factor of $\|f\|^{1/2}$, it is not difficult to choose $\epsilon > 0$ appropriately and obtain \eqref{eq:Estim-LL-omega}, and thus the proof of Proposition \ref{lem:Estim-1} is complete.
\qed
\medskip

\begin{remark}\label{rem:Estim-12} When $B : H_{0} \into H_{0}$ is bounded, the estimate of Proposition \ref{lem:Estim-1} can be improved, but the improvement does not seem fundamental in an abstract result such as the one we present here. Instead, for instance when one is concerned with a wave equation where $B\partial_{t}u := 1_{\omega}\partial_{t}u$, in specific problems one may find better estimates using the local structure of the operator $B$. \qed
\end{remark}

In the next lemma we give a better estimate when, in equation \eqref{eq:u0}, the data $g$ belongs to $H_{0}$ or to $H_{1}$.

\begin{lemma}\label{lem:Estim-1-H0}
Assume that $B$ satisfies \eqref{eq:Cnd-Op-B-1} and \eqref{eq:Cnd-Op-B-2}, $\omega \in {\Bbb R}^*$ and $g\in H_{0}$ be given. Then, the operator $L_{\omega}$ being given by \eqref{eq:L-omega} and with the notations of Proposition \ref{lem:Estim-1}, the solution $u_{0}\in H_{1}$  of $L_{\omega}u_{0} = g$ satisfies
\begin{equation}\label{eq:Estim-H0}
\|u_{0}\| \leq \left({2c(\omega) \over \omega \sqrt{\lambda_{1}}} + {3 \over \omega^2}\right)\, \|g\|.
\end{equation}
Also, for any $\omega \in {\Bbb R}^*$ any $v\in H_{1}$ we have
\begin{equation}\label{eq:Estim-H1}
\|L_{\omega}^{-1}\left(B - {\rm i}\,\omega I \right)v\|_{H_{1}}  \leq  {1 + c(\omega ) \over |\omega|}\, \|v\|_{H_{1}}.
\end{equation}
\end{lemma}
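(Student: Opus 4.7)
The plan is to obtain \eqref{eq:Estim-H0} by combining the $H_{1}$-bound from Proposition \ref{lem:Estim-1} with an energy identity tested in $H_{0}$, and to obtain \eqref{eq:Estim-H1} via a simple substitution that reduces the problem once again to Proposition \ref{lem:Estim-1}.

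For \eqref{eq:Estim-H0}, the starting point is that $g\in H_{0}$ embeds continuously into $H_{-1}$ with $\|g\|_{H_{-1}}\leq \lambda_{1}^{-1/2}\|g\|$, so Proposition \ref{lem:Estim-1} already yields $\|L^{1/2}u_{0}\|\leq c(\omega)\lambda_{1}^{-1/2}\|g\|$. To promote this to a bound on $\|u_{0}\|$ in the weaker $H_{0}$-norm (with the crucial $1/|\omega|$ gain), I would take the duality pairing of $L_{\omega}u_{0}=g$ against $u_{0}$ and extract the real part. Since $\langle Bu_{0},u_{0}\rangle\in{\Bbb R}$, the imaginary dissipative term disappears and what remains is the identity
\[
\|L^{1/2}u_{0}\|^{2}-\omega^{2}\|u_{0}\|^{2}=\mathrm{Re}(g|u_{0}),
\]
so that $\omega^{2}\|u_{0}\|^{2}\leq \|L^{1/2}u_{0}\|^{2}+\|g\|\|u_{0}\|$. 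A Young inequality $\|g\|\|u_{0}\|\leq \frac{\omega^{2}}{2}\|u_{0}\|^{2}+\frac{1}{2\omega^{2}}\|g\|^{2}$ absorbs the cross term, and after inserting the bound on $\|L^{1/2}u_{0}\|^{2}$ and taking a square root one recovers \eqref{eq:Estim-H0}.

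For \eqref{eq:Estim-H1}, the key idea is the substitution
\[
w := u_{0} - \frac{{\rm i}}{\omega}\,v, \qquad u_{0} := L_{\omega}^{-1}(B-{\rm i}\omega I)v.
\]
A direct computation from the definition \eqref{eq:L-omega} gives
\[
L_{\omega}\!\left(\tfrac{{\rm i}}{\omega}v\right)=\tfrac{{\rm i}}{\omega}Lv+Bv-{\rm i}\omega v,
\]
and subtracting this from $L_{\omega}u_{0}=Bv-{\rm i}\omega v$ produces the much simpler equation $L_{\omega}w=-\frac{{\rm i}}{\omega}Lv$. Since $L:H_{1}\to H_{-1}$ is an isometry, $\|Lv\|_{H_{-1}}=\|v\|_{H_{1}}$, and Proposition \ref{lem:Estim-1} yields $\|w\|_{H_{1}}\leq c(\omega)|\omega|^{-1}\|v\|_{H_{1}}$. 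The triangle inequality $\|u_{0}\|_{H_{1}}\leq\|w\|_{H_{1}}+|\omega|^{-1}\|v\|_{H_{1}}$ then gives \eqref{eq:Estim-H1}.

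The only real obstacle is spotting the substitution $w=u_{0}-\frac{{\rm i}}{\omega}v$: applying Proposition \ref{lem:Estim-1} directly to the right-hand side $(B-{\rm i}\omega I)v\in H_{-1}$ would give a prefactor of order $c(\omega)\bigl(\|B\|+|\omega|/\lambda_{1}\bigr)$, missing the decisive $|\omega|^{-1}$ gain that makes \eqref{eq:Estim-H1} suitable for bounding the resolvent of $A$ on ${\rm i}{\Bbb R}$. Once this correction is introduced, both estimates reduce to routine applications of Proposition \ref{lem:Estim-1} together with the isometry property of $L^{1/2}$.
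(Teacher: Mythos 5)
Your proposal is correct and follows essentially the same route as the paper: estimate \eqref{eq:Estim-H0} by pairing $L_{\omega}u_{0}=g$ with $u_{0}$, using the $H_{1}$-bound of Proposition \ref{lem:Estim-1} together with $\lambda_{1}\|g\|_{H_{-1}}^{2}\leq\|g\|^{2}$ (your taking only the real part, versus the paper's use of the imaginary part to control the $B$-term, is an immaterial variation), and estimate \eqref{eq:Estim-H1} from the identity $L_{\omega}^{-1}(B-{\rm i}\omega I)=({\rm i}\omega)^{-1}(L_{\omega}^{-1}L-I)$, which your substitution $w=u_{0}-\tfrac{{\rm i}}{\omega}v$ reproduces exactly, combined with the isometry $\|Lv\|_{H_{-1}}=\|v\|_{H_{1}}$.
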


\proof When $g \in H_{0}$, computing $\langle L_{\omega}u_{0},u_{0}\rangle = (u_{0}|g)$ and taking the imaginary part yields
\begin{equation}\label{eq:B-u0-H0}
|\omega|\,\langle Bu_{0},u_{0} \rangle \leq \|g\|\, \|u_{0}\|.
\end{equation}
Then, using Proposition \ref{lem:Estim-1}, we have
\begin{eqnarray*}
\omega^2\|u_{0}\|^2 &= \|L^{1/2}u_{0}\|^2 -{\rm i}\,\omega\langle Bu_{0},u_{0} \rangle - \langle g,u_{0}\rangle \\
&\leq c(\omega)^2\|g\|_{H_{-1}}^2 + 2 \|g\|\|u_{0}\|.
\end{eqnarray*}
From this, and the fact that $\lambda_{1}\|g\|_{H_{-1}}^2 \leq \|g\|^2$ one easily conclude that
$$\omega^2 \|u_{0}\|^2 \leq \left({2c(\omega)^2 \over \lambda_{1}} + {8 \over \omega^2}\right)\|g\|^2.$$
In order to see that \eqref{eq:Estim-H1} holds, it is sufficient to observe that
$$ L_{\omega}^{-1} \left(B - {\rm i}\,\omega I \right) = ({\rm i}\,\omega)^{-1}L_{\omega}^{-1}(L-L_{\omega}) = ({\rm i}\,\omega)^{-1}\left(L_{\omega}^{-1}L- I\right)\, ,$$
and using once more Proposition \ref{lem:Estim-1}, the proof of the Lemma is complete.
\qed

\medskip

We can now prove that ${\rm i}\,{\Bbb R} \subset \rho(A)$, the resolvent set of $A$.

\begin{lemma}\label{lem:Resolvent-A}
Assume that the operator $B$ satisfies  conditions  \eqref{eq:Cnd-Op-B-1} and \eqref{eq:Cnd-Op-B-2}. Then ${\rm i}\,{\Bbb R} \subset \rho(A)$.
\end{lemma}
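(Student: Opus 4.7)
The plan is to reduce the resolvent equation $(A - \mathrm{i}\omega I)u = f$ to a scalar equation of the form $L_{\omega}u_{0} = g$ and invoke Proposition \ref{lem:Estim-1}, which does all the heavy lifting. For the special case $\omega = 0$ the claim is already contained in Lemma \ref{lem:A-invertible}, so one may assume $\omega \in \mathbb{R}^{*}$.

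Given $f = (f_{0},f_{1}) \in \mathbb{H} = H_{1}\times H_{0}$ and a candidate $u = (u_{0},u_{1})$, the equation $(A - \mathrm{i}\omega I)u = f$ is equivalent to the system
\begin{equation*}
-u_{1} - \mathrm{i}\omega u_{0} = f_{0},\qquad L u_{0} + B u_{1} - \mathrm{i}\omega u_{1} = f_{1}.
\end{equation*}
The first equation forces $u_{1} := -f_{0} - \mathrm{i}\omega u_{0}$, and substituting into the second yields precisely
\begin{equation*}
L_{\omega}u_{0} = L u_{0} - \mathrm{i}\omega B u_{0} - \omega^{2} u_{0} = f_{1} + (B - \mathrm{i}\omega I)f_{0}.
\end{equation*}
The right-hand side lies in $H_{-1}$: indeed $f_{1} \in H_{0} \subset H_{-1}$, while $f_{0} \in H_{1}$ gives $Bf_{0} \in H_{-1}$ and $\mathrm{i}\omega f_{0} \in H_{1}\subset H_{-1}$. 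By Proposition \ref{lem:Estim-1} the operator $L_{\omega}\colon H_{1}\to H_{-1}$ is an isomorphism, so there exists a unique $u_{0}\in H_{1}$ solving this equation, with $\|u_{0}\|_{H_{1}} \leq c(\omega)\,\|f_{1} + (B - \mathrm{i}\omega I)f_{0}\|_{H_{-1}}$.

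The next step is to confirm that the resulting pair $(u_{0},u_{1})$ belongs to $D(A)$. Since $f_{0},u_{0}\in H_{1}$, we have $u_{1} = -f_{0}-\mathrm{i}\omega u_{0}\in H_{1}$, which is the first condition in \eqref{eq:Def-Dom-A-1}. For the second, from the identity $L u_{0} + B u_{1} = f_{1} + \mathrm{i}\omega u_{1}$ and the fact that the right-hand side is in $H_{0}$, we obtain $L u_{0} + B u_{1}\in H_{0}$. Hence $u\in D(A)$ and $(A-\mathrm{i}\omega I)u = f$. Uniqueness follows at once: if $(A-\mathrm{i}\omega I)u = 0$ then $L_{\omega}u_{0}=0$, and since $N(L_{\omega})=\{0\}$ (established at the beginning of the proof of Proposition \ref{lem:Estim-1}), we get $u_{0}=0$ and then $u_{1}=0$.

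It remains to bound $(A-\mathrm{i}\omega I)^{-1}$ in the operator norm on $\mathbb{H}$. The $H_{1}$-norm of $u_{0}$ is controlled by $c(\omega)(\|f_{1}\|_{H_{-1}} + (\|B\| + |\omega|)\|f_{0}\|_{H_{1}})$, and then $\|u_{1}\|_{H_{0}}\leq \|f_{0}\|_{H_{0}} + |\omega|\lambda_{1}^{-1/2}\|u_{0}\|_{H_{1}}$. These together give a finite bound depending on $\omega$, so $\mathrm{i}\omega \in \rho(A)$, completing the proof. The only genuine delicate point, and it has already been dispatched in Proposition \ref{lem:Estim-1}, is the invertibility of $L_{\omega}$; here the remaining work is essentially bookkeeping about which functional space each term lives in and checking that the candidate $u$ actually meets the membership requirements of $D(A)$.
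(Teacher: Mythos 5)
Your proof is correct, and it rests on the same reduction as the paper's: writing $(A-{\rm i}\,\omega I)u=f$ componentwise, eliminating $u_{1}=-f_{0}-{\rm i}\,\omega u_{0}$, and arriving at $L_{\omega}u_{0}=f_{1}+(B-{\rm i}\,\omega I)f_{0}$, with Proposition \ref{lem:Estim-1} as the essential input. The logical packaging, however, is genuinely different. The paper follows the template of Lemma \ref{lem:A-invertible}: it shows $N(A-{\rm i}\,\omega I)=N(A^{*}+{\rm i}\,\omega I)=\{0\}$, proves that $R(A-{\rm i}\,\omega I)$ is closed by a sequence argument (again via the bounded invertibility of $L_{\omega}$), and then concludes through Banach's closed range theorem (Theorem \ref{lem:Banach-closed}) that $R(A-{\rm i}\,\omega I)=N(A^{*}+{\rm i}\,\omega I)^{\perp}={\Bbb H}$ with bounded inverse. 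You instead prove bijectivity directly: for every $f\in{\Bbb H}$ you construct $u_{0}=L_{\omega}^{-1}\left(f_{1}+(B-{\rm i}\,\omega I)f_{0}\right)$, verify that the resulting pair lies in $D(A)$ as defined in \eqref{eq:Def-Dom-A-1}, obtain injectivity from $N(L_{\omega})=\{0\}$, and write down an explicit bound for the inverse. This avoids the adjoint and the closed-range detour altogether, and it anticipates the resolvent estimate the paper only derives afterwards in Proposition \ref{lem:Resolvent-A-1} (your formula for $u_{0}$ is exactly the one used there). The small prices are the bookkeeping you did carry out (the $D(A)$ membership, and embedding constants such as $\|v\|\leq\lambda_{1}^{-1/2}\|v\|_{H_{1}}$, where your factor $\|B\|+|\omega|$ is correct up to such constants); note also that once bijectivity of the closed operator $A-{\rm i}\,\omega I$ is established, boundedness of the inverse would follow from the closed graph theorem, so your final estimate, while useful, is not strictly required for the statement.
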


\proof It is clear that we may assume $|\omega| > 0$, since the case $\omega = 0$ is already treated by Lemma \ref{lem:A-invertible}.

In order to see that $\lambda = {\rm i}\,\omega $  belongs to $\rho(A)$ for any $\omega\in{\Bbb R}^*$, we begin by showing that
$$N(A -\lambda I) = N(A^* - \overline{\lambda}I) = \{0\}.$$
Indeed if $u\in D(A)$ and $Au - {\rm i}\, \omega\, u = 0$, then we have $u_{1} = -{\rm i \omega}\, u_{0}$ and
$$L u_{0} -{\rm i}\,\omega\, Bu_{0} -\omega^2 u_{0} = 0\, ,$$
and by Proposition \ref{lem:Estim-1} we know that $u_{0}=0$, and thus the $N(A-{\rm  i}\omega I)=\{0\}$.
In the same way, one may see that $N(A^* + {\rm i}\omega I) = \{0\}$.
\medskip

Next we show that both $R(A - {\rm i}\omega I)$ and $R(A^* +{\rm i}\omega I)$ are closed. Since it is clearly sufficient to prove the former property, let a sequence $(u_{n})_{n \geq 1} = \{(u_{0n}, u_{1n})\}_{n \geq 1}$ in $D(A)$ be so that
$$f_{n}=(f_{0n}, f_{1n}) := Au_{n} - {\rm i}\,\omega\, u_{n}\to f = (f_{0}, f_{1})  \quad\mbox{in }\, {\Bbb H}.$$
In particular we have
$$-u_{1n} - {\rm i}\,\omega\, u_{0n} = f_{0n} \to f_{0}\quad\mbox{in }\, H_{1}.$$
 Reporting the expression of $u_{1n} = - f_{0n} - {\rm i}\,\omega\, u_{0n}$ into the second component of $Au_{n}$, upon setting
$$g_{n} := f_{1n} + Bf_{0n} - {\rm i}\,\omega\, f_{0n}\, ,$$
and $g := f_{1} + Bf_{0} - {\rm i}\,\omega\, f_{0}$,
we have clearly $g_{n} \to g$ in $H_{-1}$ and
\begin{equation}\label{eq:u0n}
L u_{0n} - {\rm i}\,\omega\, B u_{0n} - \omega^2 u_{0n} = g_{n}.
\end{equation}
Using Proposition \ref{lem:Estim-1}, we know that $L  - {\rm i}\,\omega\, B   - \omega^2  I$ has a bounded inverse, and thus $u_{0n}\to u_{0}$  in $H_{1}$, where $u_{0}$ is the unique solution of
$$L u_{0} - {\rm i}\,\omega\, B u_{0} - \omega^2 u_{0} = g.$$
It is clear that this shows that $u_{n}\to u :=(u_{0},u_{1})$, where $u_{1}=- {\rm i}\,\omega\, u_{0} - f_{0}$. Thus $R(A - {\rm i}\, \omega\, I)$ is closed, in fact $R(A - {\rm i}\, \omega\, I) = {\Bbb H}$ and $(A - {\rm i}\, \omega\, I)^{-1}$ is bounded. \qed

\begin{proposition}\label{lem:Resolvent-A-1}
Assume that the operator $B$ satisfies  conditions  \eqref{eq:Cnd-Op-B-1} and \eqref{eq:Cnd-Op-B-2}. Then there exists a constant $c_{*}> 0$ such that for all $\omega \in {\Bbb R}$ we have
\begin{equation}\label{eq:Estim-Resol}
\|R({\rm i}\,\omega,A)\|\le c_{*}\, c(\omega),
\end{equation}
where
$c(\omega)$ is defined in \eqref{eq:Def-c-omega}.
\end{proposition}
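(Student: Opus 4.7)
Fix $\omega \in \mathbb{R}$ and $f = (f_0, f_1) \in \mathbb{H}$; we want to solve $(A - \mathrm{i}\omega I) u = f$ and estimate $\|u\|_{\mathbb{H}}$. Writing $u = (u_0, u_1)$, the system unpacks to $-u_1 - \mathrm{i}\omega u_0 = f_0$ and $L u_0 + B u_1 - \mathrm{i}\omega u_1 = f_1$. Substituting $u_1 = -\mathrm{i}\omega u_0 - f_0$ into the second equation reduces the problem to
\[
L_{\omega} u_0 = f_1 + (B - \mathrm{i}\omega I) f_0 \quad \text{in } H_{-1}, \qquad u_1 = -\mathrm{i}\omega u_0 - f_0,
\]
with $L_\omega$ as in \eqref{eq:L-omega}. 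Existence and uniqueness of $u_0$ follow from Lemma \ref{lem:Resolvent-A} (or directly from Proposition \ref{lem:Estim-1}).

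The plan is to bound $\|L^{1/2} u_0\|$ and $\|u_1\|$ separately, splitting $u_0 = L_\omega^{-1} f_1 + L_\omega^{-1}(B - \mathrm{i}\omega I) f_0$. For the $H_1$-norm of $u_0$: Proposition \ref{lem:Estim-1} applied to $f_1 \in H_0 \subset H_{-1}$ yields $\|L_\omega^{-1} f_1\|_{H_1} \leq c(\omega) \|f_1\|_{H_{-1}} \leq \lambda_1^{-1/2} c(\omega) \|f_1\|$, while estimate \eqref{eq:Estim-H1} of Lemma \ref{lem:Estim-1-H0} gives $\|L_\omega^{-1}(B - \mathrm{i}\omega I) f_0\|_{H_1} \leq (1 + c(\omega))/|\omega| \cdot \|f_0\|_{H_1}$. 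Adding these contributions yields $\|L^{1/2} u_0\| \leq K_1 c(\omega) \|f\|_{\mathbb{H}}$ for a universal constant $K_1$ whenever $|\omega|$ is bounded away from $0$.

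For $\|u_1\| \leq |\omega|\|u_0\| + \|f_0\| \leq |\omega|\|u_0\| + \lambda_1^{-1/2} \|f_0\|_{H_1}$, one needs to control $|\omega| \|u_0\|$ in $H_0$. Estimate \eqref{eq:Estim-H0} of Lemma \ref{lem:Estim-1-H0} directly gives $|\omega|\|L_\omega^{-1} f_1\| \leq (2 c(\omega)/\sqrt{\lambda_1} + 3/|\omega|) \|f_1\|$, whereas the embedding $H_1 \hookrightarrow H_0$ combined with \eqref{eq:Estim-H1} bounds $|\omega|\|L_\omega^{-1}(B - \mathrm{i}\omega I) f_0\| \leq \lambda_1^{-1/2} (1 + c(\omega)) \|f_0\|_{H_1}$. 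Together these yield $\|u_1\| \leq K_2 c(\omega) \|f\|_{\mathbb{H}}$ for $|\omega| \geq 1$, say.

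Finally, the regime of small $|\omega|$ must be handled by a soft continuity argument: by Lemma \ref{lem:Resolvent-A} the map $\omega \mapsto R(\mathrm{i}\omega, A)$ is continuous on $\mathbb{R}$, hence uniformly bounded on $[-1, 1]$, and $c(\omega)$ is bounded below by a positive constant on this compact set (directly from \eqref{eq:Def-c-omega}), so the bound $\|R(\mathrm{i}\omega, A)\| \leq c_* c(\omega)$ on $|\omega| \leq 1$ follows by adjusting $c_*$. Gluing the two regimes produces the desired estimate \eqref{eq:Estim-Resol}. The main obstacle is essentially bookkeeping: one must verify that each of the several elementary factors appearing (powers of $\lambda_1$, the $1/|\omega|$ from Lemma \ref{lem:Estim-1-H0}, the operator norm of $B$) can be absorbed into a single constant multiple of $c(\omega)$ uniformly in $\omega$, for which splitting off small $|\omega|$ is essential since the bound \eqref{eq:Estim-H1} otherwise diverges there.
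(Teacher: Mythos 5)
Your proposal is correct and follows essentially the same route as the paper: the same reduction $u_0 = L_\omega^{-1}(B-\mathrm{i}\omega I)f_0 + L_\omega^{-1}f_1$, $u_1 = -f_0 - \mathrm{i}\omega u_0$, with Proposition \ref{lem:Estim-1} and the two estimates of Lemma \ref{lem:Estim-1-H0} supplying the bounds. Your explicit treatment of the regime $|\omega|\leq 1$ by continuity of the resolvent together with the lower bound $c(\omega)\geq c^*$ (which holds since $\alpha_{k+1}(\omega)>1$) is a small tidy addition to an argument the paper leaves implicit by restricting to $|\omega|\geq 1$.
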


\proof
By Lemma \ref{lem:Resolvent-A} we know that the imaginary axis of the complex plane is contained in the resolvent set of the operator $A$.  For $f=(f_{0},f_{1}) \in {\Bbb H}$, the equation $Au - {\rm i}\,\omega u = f$ can be written as
\begin{equation*} 
\begin{cases}
- u_{1} - {\rm i}\, \omega u_{0} = f_{0} \in H_{1}, \\
L u_{0} + B u_{1} - {\rm i} \,\omega u_{1} = f_{1}\in H_{0}.
\end{cases}
\end{equation*}
Consequently, it follows that
\begin{eqnarray*}
& u_{0} &= L_{\omega}^{-1}\left(B - {\rm i}\, \omega I\right)f_{0} + L_{\omega}^{-1}f_{1} \, , \\
& u_{1} &= -f_{0} - {\rm i}\,\omega u_{0} .
\end{eqnarray*}
By Proposition \ref{lem:Estim-1} and the estimate \eqref{eq:Estim-H1} of Lemma \ref{lem:Estim-1-H0} we have
\begin{equation*}
\begin{array}{rl}
\|u_{0}\|_{H_{1}} &\leq \|L_{\omega}^{-1}\left(B - {\rm i}\, \omega I\right)f_{0}\|_{H_{1}} + \|L_{\omega}^{-1}f_{1}\|_{H_{1}} \\
&\leq \displaystyle {1 + c(\omega ) \over |\omega|}\|f_{0}\|_{H_{1}} + c(\omega)\|f_{1}\|_{H_{-1}}
\\
& \leq c_{*}\, c(\omega)\,\|f\|_{{\Bbb H}}\, ,
\end{array}
\end{equation*}
for some constant $c_{*} >0$ independent of $\omega$ and $f$.

On the other hand, using \eqref{eq:Estim-H0} we have, again for some constant $c_{*}$ independent of $f_{1}$ and $|\omega| \geq 1$ 
$$|\omega|\, \|L_{\omega}^{-1}f_{1}\| \leq c_{*}\,c(\omega)\, \|f_{1}\|,$$
and thus, since $u_{1} = -f_{0} - {\rm i}\,\omega u_{0}$,
\begin{equation*}
\begin{array}{rl}
\|u_{1}\| &\leq \|f_{0}\| + |\omega|\, \|L_{\omega}^{-1}\left(B - {\rm i}\, \omega I\right)f_{0}\| + |\omega|\, \|L_{\omega}^{-1}f_{1}\| \\
& \leq c_{*}\, c(\omega)\,\|f\|_{{\Bbb H}}\, ,
\end{array}
\end{equation*}
for some appropriate constant $c_{*}$ independent of $\omega$ and $f$.
\qed
\medskip

We are now in a position to prove our main abstract result.

\noindent{\bf Proof of Theorem \ref{lem:Main-1}. }
Take $\omega \in {\Bbb R}$. In order to prove our claim, using Theorem \ref{lem:Borichev-Tomilov}, it is enough to show that the constant $c(\omega)$ which appears in \eqref{eq:Estim-Resol} has a growth rate of at most $(1 + |\omega|^m)$, where $m$ is given by \eqref{eq:Decay-Abs}.  It is clear that it is sufficient to prove the estimate on $c(\omega)$ when $\omega^2 \geq \lambda_{1}/2$. Therefore, assuming that such is the case, there is an integer $k\geq 1$ such that
$$\lambda_{k-1} < {\lambda_{k-1} + \lambda_{k} \over 2} \leq \omega^2 \leq {\lambda_{k} + \lambda_{k+1} \over 2} < \lambda_{k+1}.$$
Thus, with the notations of Proposition \ref{lem:Estim-1}, we have
\begin{equation}
\alpha_{k-1}(\omega) + \alpha_{k+1}(\omega) \leq {2\lambda_{k-1} \over \lambda_{k} - \lambda_{k-1}}
+ {2\lambda_{k+1} \over \lambda_{k+1} - \lambda_{k}} \leq 2\, c_{0}\, \lambda_{k}^{\gamma_{1}}.
\end{equation}
Now, thanks to the assumption \eqref{eq:Poly-growth-eigen} we have $\lambda_{k+1}\leq \lambda_{k}/\lambda_{*}$ and, when $k\geq 2$, we have also $\lambda_{k-1} \geq \lambda_{*}\lambda_{k}$. From this we may conclude that
$${1 \over 2}(1 + \lambda_{*})\lambda_{k} \leq \omega^2 \leq {1 +\lambda_{*} \over 2\lambda_{*}}\lambda_{k}.$$
Using the expression of $c(\omega)$ given by \eqref{eq:Def-c-omega}, one may find a constant $c_{0}^* > 0$, depending only $\lambda_{*},c_{0}$ and on $\lambda_{1}, c^*$, so that for all $\omega$ with $\omega^2 \geq \lambda_{1}/2$ we have
$$c(\omega) \leq c_{0}^*\,\left(|\omega|^{1+2\gamma_{0}} + (1 + |\omega|^{2(1+\gamma_{0})})\,|\omega|^{4\gamma_{1}}\,(1 + |\omega|) \right).$$
From this, setting $m := 3 + 2\gamma_{0} + 4\gamma_{1}$, it is not difficult to see that one has  $c(\omega)\leq c_{1}^*(1+|\omega|^m)$, at the expense of choosing another constant $c_{1}^*$, and the proof of our Theorem is complete.
\qed
\medskip

In the following sections we give a few  examples of damped wave equations which can be treated according to Theorem \ref{lem:Main-1}.

\section{Wave equations in dimension one}\label{sec:Wave-dim-1}

In this section we give a few applications of Theorem \ref{lem:Main-1} to the case of a class of wave equations, in dimension one, that is a system corresponding to the vibrations of a string. The treatment of such a problem is easier in one dimension than in higher dimensions, due to the fact that on the one hand the multiplicity of each eigenvalue is one, the distance between consecutive eigenvalues is large, and on the other hand the eigenfunctions are explicitely known in some cases, and have appropriate asymptotic behaviour when they are not explicitely known. 

More precisely, without loss of generality, we may assume that $\Omega = (0,\pi)$ and, 
with the notations of the previous section, we set $H_{0} := L^2(0,\pi)$, the scalar product of $f,g \in L^2(0,\pi) = L^2((0,\pi),{\Bbb C})$ being denoted by
$$(f|g) := \int_{0}^{\pi}f(x)\, \overline{g(x)}\,dx,$$
and the associated norm by $\|\cdot\|$. Let $a \in L^\infty(0,\pi)$ be a  positive  function such that for a certain $\alpha_{0} > 0$, we have $a(x) \geq \alpha_{0}$ a.e. in  $(0,\pi)$. Then, two nonnegative functions $b_{1},b_{2} \in L^\infty(0,\pi)$ being given, the system 
\begin{equation}\label{eq:System-21}
\begin{cases}
\partial_{tt}u - \partial_{x}(a\partial_{x} u) + b_{1}\partial_{t}u - \partial_{x}(b_{2}\partial_{x} \partial_{t}u)= 0  & \mbox{ in } (0, \infty) \times (0,\pi), \\
u(t,0) = u(t,\pi) = 0  & \mbox{ on } (0, \infty), \\
u(0,x)  = u_{0}(x) & \mbox{ in } (0,\pi) \\
u_{t}(0,x)  = u_{1}(x) & \mbox{ in } (0,\pi),
\end{cases}
\end{equation}
is a special case of the system \eqref{eq:System-Abs}. We are going to verify that under certain circumstances, we can apply Theorem \ref{lem:Main-1} and obtain a polynomial decay for the energy associated to equation \eqref{eq:System-21}.

First we consider the operator $(L,D(L))$ defined by
\begin{align}
& Lu := - (a(\cdot) u')' \label{eq:Def-L-wave-1} \\
& D(L) := \left\{u \in H^1_{0}(0,\pi) \; ; \; Lu \in L^2(0,\pi)\right\},\label{eq:Def-D-L-wave-1}
\end{align}
which is a selfadjoint, positive operator with a compact resolvent and one has $H_{1} := D(L^{1/2}) = H^1_{0}(0,\pi)$. We shall endow $H^1_{0}(0,\pi)$ with the scalar product
$$(u|v)_{H^1_{0}} := \int_{0}^\pi u'(x) \cdot \overline{v'(x)}\, dx,$$
and its associated norm $u \mapsto \|u'\|$ (the resulting topology is equivalent to that resulting from the equivalent Hilbertian norm $u\mapsto \|a^{1/2} u'\|$).

For the operator $B$, assuming that the functions $b_{1},b_{2}$ are such that at least one of the conditions \eqref{eq:Cnd-b1} or \eqref{eq:Cnd-b2} is satisfied, we define
\begin{equation}\label{eq:Def-Op-B-1}
B\phi := b_{1}\phi - (b_{2} \phi')'.
\end{equation}
It is easy to verify that the operator $B$ is bounded and selfadjoint from $H_{0}^1(0,\pi)$ into $H^{-1}(0,\pi)$ and that it satisfies conditions \eqref{eq:Cnd-Op-B-1}.

Assume also that $\Omega_{1}$ and $\Omega_{2}$ are given by
\begin{equation}\label{eq:ell-12}
\Omega_{1} = (\ell_{1},\ell_{1}+\delta_{1}),\qquad
\Omega_{2} := (\ell_{2},\ell_{2} + \delta_{2})
\end{equation}  
with $0\leq \ell_{1} < \ell_{1} + \delta_{1} \le \pi$ and  $0\leq \ell_{2} < \ell_{2} + \delta_{2} \leq \pi$. Then we have the following result:

\begin{proposition}\label{lem:String}
Assume that $N = 1$ and let the domains $\Omega_{1},\Omega_{2}$  be as in \eqref{eq:ell-12} with $\delta_{2} > 0$. Let the function $a$ in \eqref{eq:System-21} be of class $C^2([0,\pi])$ and, for $j=1$ or $j=2$, the functions $b_{j} \in L^\infty(0,\pi)$ be such that $b_{j} \geq \epsilon_{j} \geq 0$ on $\Omega_{j}$, where $\epsilon_{j}$ is a constant. Then, if $\epsilon_{2} > 0$, there exists a constant $c_{*} > 0$ such that the energy of the solution of \eqref{eq:System-21} satisfies
\begin{equation}\label{eq:Decay-N-1}
\|\partial_{x}u(t,\cdot)\|^2 + \|\partial_{t}u(t,\cdot)\|^2 \leq c_{*}\, (1+t)^{-2/3}\left[
\|\partial_{x}u_{0}\|^2 + \|u_{1}\|^2\right].
\end{equation}
Also, if $b_{2} \equiv 0$ and $\epsilon_{1} > 0$, there exists a constant $c_{*} > 0$ such that
\begin{equation}\label{eq:Decay-N-1-bis}
\|\partial_{x}u(t,\cdot)\|^2 + \|\partial_{t}u(t,\cdot)\|^2 \leq c_{*}\, (1+t)^{-1/2}\left[
\|\partial_{x}u_{0}\|^2 + \|u_{1}\|^2\right].
\end{equation}
\end{proposition}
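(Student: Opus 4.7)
The plan is to recast \eqref{eq:System-21} as an instance of the abstract equation \eqref{eq:System-Abs} with $L$ and $B$ as in \eqref{eq:Def-L-wave-1}--\eqref{eq:Def-Op-B-1}, verify the structural hypotheses of Theorem~\ref{lem:Main-1}, and invoke it (with a refinement for the purely viscous case). The main step is to read off the correct values of the exponents $\gamma_{0}$ and $\gamma_{1}$ from the one-dimensional Sturm--Liouville theory attached to $L$.

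First I would pin down the spectral asymptotics of $L=-\partial_x(a\partial_x\cdot)$ on $H^1_0(0,\pi)$. Since $a\in C^2([0,\pi])$ is bounded away from $0$, the Liouville change of variable $y:=\int_0^x a(\xi)^{-1/2}d\xi$ conjugates $L$ to an operator of the form $-\partial_{yy}+q(y)$ on $[0,C]$ with $C:=\int_0^\pi a^{-1/2}\,d\xi$. All eigenvalues are then simple, and classical asymptotics give
$$\lambda_{k}=\frac{k^2\pi^2}{C^2}+O(1),\qquad \lambda_{k+1}-\lambda_{k}\sim \frac{2\pi\sqrt{\lambda_{k}}}{C}.$$
Hence \eqref{eq:Poly-growth-eigen} holds and \eqref{eq:Cnd-alpha-k} is satisfied with $\gamma_{1}=1/2$. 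Moreover, the Liouville--Green WKB expansion yields
$$\phi_{k}(x)=\sqrt{\tfrac{2}{C}}\,a(x)^{-1/4}\sin\!\bigl(\sqrt{\lambda_{k}}\,y(x)\bigr)+O(k^{-1}),$$
uniformly on $[0,\pi]$, with leading-order derivative $\phi_{k}'(x)\sim\sqrt{2\lambda_{k}/C}\,a(x)^{-3/4}\cos(\sqrt{\lambda_{k}}\,y(x))$.

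For the Kelvin--Voigt case ($\epsilon_{2}>0$), bound $\langle B\phi_{k},\phi_{k}\rangle\ge \epsilon_{2}\int_{\Omega_{2}}|\phi_{k}'|^2\,dx$. Using the WKB asymptotics and the Riemann--Lebesgue lemma one obtains $\int_{\Omega_{2}}|\phi_{k}'|^2\,dx\ge c\,\lambda_{k}$ for some $c>0$ and all $k$ large, so $\beta_{k}\le C\lambda_{k}^{-1}$, i.e.\ one takes $\gamma_{0}=-1$. Theorem~\ref{lem:Main-1} then gives $m=3+2(-1)+4(1/2)=3$, hence \eqref{eq:Decay-N-1}.

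For the purely viscous case ($b_{2}\equiv 0$, $\epsilon_{1}>0$), the same WKB computation gives $\int_{\Omega_{1}}|\phi_{k}|^2\,dx\ge c>0$, so $\beta_{k}$ is bounded and $\gamma_{0}=0$; a direct application of Theorem~\ref{lem:Main-1} would yield only $m=5$ and decay $(1+t)^{-2/5}$, weaker than the claimed \eqref{eq:Decay-N-1-bis}. To recover $m=4$, one has to exploit, as signalled in Remark~\ref{rem:Estim-12}, that $B=b_{1}I$ is bounded from $H_{0}=L^2(0,\pi)$ into itself. Concretely, I would rerun the estimates of Proposition~\ref{lem:Estim-1} with this extra information: the Cauchy--Schwarz bound $|(\BB f|v)|\le(\BB f|f)^{1/2}\|\BB\|^{1/2}\|v\|$ can be replaced by the direct $L^2$-bound $|(\BB f|v)|\le \|b_{1}\|_\infty\lambda_{1}^{-1}\|f\|\,\|v\|$, removing the parasitic $|\omega|^{1/2}$ loss coming from \eqref{eq:BB-f}. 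Combined with the sharp local identity $|\omega|\,\epsilon_{1}\int_{\Omega_{1}}|u_{0}|^2\le \|g\|\|u_{0}\|$ and a standard one-dimensional multiplier argument to propagate the local bound to the whole of $(0,\pi)$, this yields $c(\omega)\lesssim (1+|\omega|)^4$, whence Theorem~\ref{lem:Borichev-Tomilov} delivers \eqref{eq:Decay-N-1-bis}. The main obstacle is exactly this refinement: Theorem~\ref{lem:Main-1} as stated is suboptimal when the damping is of order zero, and one must carefully redo the resolvent estimate of Proposition~\ref{lem:Estim-1} exploiting the $L^2$-boundedness of $B$ in order to recover the correct power of $|\omega|$; the spectral asymptotics and the Kelvin--Voigt computation are by comparison standard.
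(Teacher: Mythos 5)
Your treatment of the Kelvin--Voigt case is correct and is essentially the paper's own argument: the paper computes $\lambda_k=k^2$, $\phi_k=\sqrt{2/\pi}\sin(kx)$ explicitly when $a\equiv 1$, obtains $\gamma_1=1/2$ from the spectral gap and $\gamma_0=-1$ from $\int_{\Omega_2}|\phi_k'|^2\,dx\geq c\,\lambda_k$, and for general $a\in C^2$ invokes the Liouville-transformation asymptotics \eqref{eq:Expan-1}--\eqref{eq:Expan-3}, exactly the WKB expansions you use; both routes give $m=3$ and \eqref{eq:Decay-N-1}.

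The second half is where the problem lies. You are right that with $\gamma_0=0$, $\gamma_1=1/2$ a direct application of Theorem \ref{lem:Main-1} gives only $m=3+2\gamma_0+4\gamma_1=5$, i.e.\ the rate $(1+t)^{-2/5}$; in fact you have put your finger on an inconsistency in the paper itself, whose proof of \eqref{eq:Decay-N-1-bis} concludes with ``$m=3+2\gamma_0+2\gamma_1=4$'', in conflict with the exponent $3+2\gamma_0+4\gamma_1$ stated in Theorem \ref{lem:Main-1}, and offers no refinement of Proposition \ref{lem:Estim-1}. However, your proposed repair is not a proof. The substitute bound $|(\BB f|v)|\le \|b_1\|_\infty\lambda_1^{-1}\|f\|\,\|v\|$ is weaker, not stronger, than the step it is meant to replace: inserted into the inequality preceding \eqref{eq:Estim-v} it produces a term of order $|\omega|\,\alpha_{k-1}(\omega)\,\|f\|$ on the right-hand side, whose coefficient grows like $\omega^{2}$, so the self-consistent estimate for $\|f\|$ no longer closes at all; the Cauchy--Schwarz step combined with the dissipation bound \eqref{eq:BB-f} is precisely what limits the loss to $|\omega|^{1/2}\|f\|^{1/2}$. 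The actual work --- the ``standard one-dimensional multiplier argument'' that would propagate the local information $|\omega|\,\epsilon_1\int_{\Omega_1}|u_0|^2\le\|g\|\,\|u_0\|$ to all of $(0,\pi)$ and yield $c(\omega)\lesssim(1+|\omega|)^4$ (indeed, for viscous damping on a subinterval in one dimension one expects a uniform resolvent bound and exponential decay) --- is left entirely unexecuted, and it lies outside the abstract framework of Section \ref{sec:Abs}. As written, your argument establishes \eqref{eq:Decay-N-1} and the weaker rate $(1+t)^{-2/5}$ in the purely viscous case, but not \eqref{eq:Decay-N-1-bis}.
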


\proof Consider first the case $a(x) \equiv 1$. Then, for all integers $k \geq 1$
$$\lambda_k = k^2, \quad\mbox{and}\quad \phi_k =  \sqrt{2/\pi}\sin(kx). $$
One sees immediately that for some constant $c_{*} > 0$ independent of $k$ we have
$$
{\lambda_{k-1} \over \lambda_{k} - \lambda_{k-1}} + {\lambda_{k+1} \over \lambda_{k+1} - \lambda_{k}} \leq k \leq c_{*}\, \lambda_{k}^{1/2} ,$$ 
and thus, with the notations of Theorem \ref{lem:Main-1}, we can take $\gamma_1 =1/2$.

On the other hand, when $\epsilon_{2} > 0$, one checks easily that for some constant $c$ independent of $k$ we have
\begin{equation}
\langle B\phi_k,\phi_k\rangle  \geq 
\epsilon_{2} \int_{\ell_{2}}^{\ell_{2}+ \delta_{2}} |\phi_{k}'(x)|^2dx = {2\epsilon_{2} k^2 \over \pi}
 \int_{\ell_{2}}^{\ell_{2} + \delta_{2}}\!\!\! \cos^2(kx)\,dx \geq c\, k^2.
\end{equation} 
Therefore for some other constant $c_{*}$ independent of $k$, we have $\beta_k \leq c_{*} \lambda_{k}^{-1}$, and thus we can take $\gamma_{0} := -1$.

Finally we have $m  = 3 + 2\gamma_{0} + 4\gamma_{1} = 3$ and, according to Theorem \ref{lem:Borichev-Tomilov}, the semigroup decays polynomially with rate $1/3$, that is the decay estimate for the energy is given by \eqref{eq:Decay-N-1}.
\medskip

When $b_{2} \equiv 0$ and $\epsilon_{1} > 0$, then the only damping comes from the term involving $b_{1}$ and in this case
\begin{equation}
\langle B\phi_k,\phi_k\rangle  \geq 
\epsilon_{1} \int_{\ell_{1}}^{\ell_{1}+ \delta_{1}} |\phi_{k}(x)|^2dx = {2\epsilon_{1} \over \pi}
 \int_{\ell_{1}}^{\ell_{1} + \delta_{1}}\!\!\! \sin^2(kx)\,dx \geq c.
\end{equation} 
Thus $\beta_{k} \leq c_{*}$, and we can take $\gamma_{0} := 0$. From this we infer that $m = 3 + 2\gamma_{0} + 2\gamma_{1} = 4$, which means that \eqref{eq:Decay-N-1-bis} holds.
\medskip

When $a$ is not identically equal to $1$, it is known that there exist two positive constants $C_{1}, C_{2}$ and a sequence of real numbers $(c_{k})_{k \geq 1}$, satisfying $\sum_{k\geq 1}|c_{k}|^2 < \infty$, such that as $k \to \infty$ the eigenvalues $\lambda_{k}$ and eigenfunctions $\phi_{k}$ satisfy, uniformly in $x$, 
\begin{align}
&\lambda_{k} = \ell^2 k^2 + C_{1} + c_{k}, \label{eq:Expan-1}\\
&\phi_{k}(x) = C_{2}\, a(x)^{-1/4} \sin(k\xi(x)) + O(k^{-1}), \label{eq:Expan-2}\\
&\phi_{k}'(x) = C_{2}\,a(x)^{-3/4}\, k\, \cos(k\xi(x)) + O(1),\label{eq:Expan-3}
\end{align}
where
$$\ell := \int_{0}^\pi a(y)^{-1/2}\,dy, \quad \xi(x):= {\pi \over \ell}\int_{0}^x a(y)^{-1/2}\,dy,\quad\mbox{and}\quad
\sum_{k\geq 1}|c_{k}|^2 <\infty .$$
These formulas are obtained through the Liouville transformation, and we do not give the details of their computations, since we can refer to J. P\"oschel \& E. Trubowitz \cite{Trubowitz}, or A.~Kirsch \cite[Chapter 4]{Kirsch}. Indeed in the latter reference, in Theorem 4.11, the result is stated for the Dirichlet eigenvalue problem $-\phi'' + q \phi = \lambda \phi$, but one may show that after an appropriate change of variable and unknown function, described in the introduction of chapter 4, on pages 121--122 of this reference, one can prove the formulas given above, which are of interest in our case.

Now, according to the definition of $x\mapsto \xi(x)$, making a change of variable in the first integral below, one has 
$$\int_{\ell_{2}}^{\ell_{2}+\delta_{2}}a(x)^{-3/2}\cos^2(k\xi(x))\,dx = {\ell \over \pi}\int_{\xi(\ell_{2})}^{\xi(\ell_{2}+\delta_{2})} a(x(\xi))^{-1}\cos^2(k\xi)\,d\xi,$$
so that on a close examination of the asymptotic expansions \eqref{eq:Expan-1}--\eqref{eq:Expan-3},  one is convinced that the same values for the exponents $\gamma_{0}$ and $\gamma_{1}$ of Theorem \ref{lem:Main-1} can be obtained, and the proof of the Proposition is complete.
\qed

\begin{remark}
When $a\equiv 1$, a great number of results exist in the literature. In particular, assuming that $b_{1} \equiv 0$ and $b_{2} := 1_{\Omega_{2}}$, Z.~Liu and B.P.~Rao \cite{Liu-Rao-2005}, M.~Alves \& al.~\cite{Alves-Rivera} have shown that the semigroup has a decay rate of $(1+t)^{-2}$, thus the energy decays with the rate $(1+t)^{-4}$, and that this decay rate is optimal. However the cases in which $a\not\equiv 1$, or $b_{1} \geq \epsilon_{1} >0$ on $\Omega_{0}$, are not covered by these authors, while the method we present here can handle such cases, at the cost of not establishing an optimal decay in simpler cases. \qed
\end{remark}

\begin{remark}\label{rem:SL-general}
As a matter of fact the same decay rate of the energy, with the same exponent number $m = 3$, holds for a wave equation of the form
$$\rho(x)\partial_{tt}u - \partial_{x}(a(x)\partial_{x}u) + q(x)u + b_{0}(x)\partial_{t} u - \partial_{x}(b_{1}(x)\partial_{xt}u) = 0.$$
In such a case, the operator $L$ will be given by
\begin{equation}\label{eq:SL-general}
Lu := -\rho(x)^{-1}(a(x)u')' + \rho(x)^{-1}q(x)u,
\end{equation}
where $\rho$ and $a$ belong to $C^2([0,\pi])$ and $\min(\rho(x),a(x)) \geq \epsilon_{0} > 0$, while the potential $q\in C([0,\pi])$ is such that the least eigenvalue $\lambda_{1}$ of the problem
$$-(a(x)\phi')' + q\phi = \lambda \rho(x)\phi, \qquad \phi(0) = \phi(\pi) = 0,$$
verifies $\lambda_{1} > 0$ (in fact any other boundary conditions, such as Neumann, or Fourier conditions, ensuring that the first eigenvalue $\lambda_{1} > 0$, can be handled, with the same decay rate for the corresponding wave equation). Indeed, such an operator $L$ is selfadjoint in the weighted Lebesgue space $L^2(0,\pi, \rho(x)dx)$, and it is known that (see for instance A.~Kirsch \cite{Kirsch}, as cited above) an expansion of the form \eqref{eq:Expan-1}--\eqref{eq:Expan-3} holds in this case for the eigenvalues and eigenfunctions of $L$, with the only difference that in \eqref{eq:Expan-2} the function $a(x)^{-1/4}$  should be replaced by $a(x)^{-1/4}\rho(x)^{-1/4}$, and in \eqref{eq:Expan-3}  the function $a(x)^{-3/4}$  should be replaced by $a(x)^{-3/4}\rho(x)^{-1/4}$.
\qed
\end{remark}
\medskip

\begin{remark}\label{rem:a-discont}
It is noteworthy to observe that the assumption $a\in C^2([0,\pi])$ of Proposition \ref{lem:String}, as well as the condition $\rho\in C^2([0,\pi])$ in Remark \ref{rem:SL-general}, are needed in order to apply the general result which ensures the precise asymptotics \eqref{eq:Expan-1}--\eqref{eq:Expan-3}.
We are not aware of any result analogous to the precise expansion properties \eqref{eq:Expan-1}--\eqref{eq:Expan-3} in the general case where $a,\rho$ are only in $L^\infty(0,\pi)$.

However, in some cases in which the coefficients $a$ and $\rho$ are not smooth, it is nevertheless possible to show that the behaviour of the eigenvalues $\lambda_{k}$ and eigenfunctions $\phi_{k}$ resembles those of the Laplace operator with Dirichlet boundary conditions on $(0,\pi)$.
Such an example may be given by coefficients having a finite number of discontinuites, such as step functions, for which explicit calculation of $\lambda_{k}$ and $\phi_{k}$ is possible. For instance consider $\rho(x) \equiv 1$ and $a\in L^\infty(0,\pi)$ the piecewise constant function given by
$$a(x) := 1_{(0,\pi/2)}(x) + 4\times 1_{(\pi/2,\pi)}(x),$$
where for a set $A$ the function $1_{A}$ denotes the characteristic function of $A$.
Then a simple, but perhaps somewhat dull, if not tedious, calculation shows that the eigenvalues and eigenfunctions solutions to
$$-(a(x)\phi_{k}'(x))' = \lambda_{k} \phi_{k}(x), \qquad \phi_{k}(0) = \phi_{k}(\pi) = 0,$$
are given by 
$$\left\{(\lambda_{k},\phi_{k}) \; ; \; k \geq 1\right\} = \left\{(\mu_{1,m},\phi_{1,m}) \; ; \; m \in {\Bbb N}^*\right\} \cup
\left\{(\mu_{2,n},\phi_{2,n}) \; ; \; n \in {\Bbb Z}\right\},
 $$
where the sequences
$(\mu_{1,m},\phi_{1,m})_{m \geq 1}$ and $(\mu_{2,n},\phi_{2,n})_{n \in {\Bbb Z}}$ are defined as follows. For $m \geq 1$ integer, $\mu_{1,m} := 16m^2$ and (up to a multiplicative normalizing constant independent of $m$)
$$
\phi_{1,m}(x) = \sin(4mx)1_{(0,\pi/2)}(x) + 2\times (-1)^{m}\sin(2mx)\, 1_{(\pi/2,\pi)}(x). $$
Also, for $n\in {\Bbb Z}$, the sequence $\lambda_{2,n}$ is given by
$$\mu_{2,n} := 16 \left(n + {\arctan(\sqrt{2}) \over \pi}\right)^2,$$
and (again up to a multiplicative normalizing constant independent of $n$)
$$
\phi_{2,n}(x) := \sin(\sqrt{\mu_{2,n}}\,x)\, 1_{(0,\pi/2)} + 
(-1)^n\,{2\sqrt{3} \over 3}  \, \sin\left({\sqrt{\mu_{2,n}}\,(\pi -x) \over 2}\right)1_{(\pi/2,\pi)} .
$$
Now it is clear that proceeding as in the proof of Proposition \ref{lem:String}, when $\epsilon_{2} > 0$, one can infer that the exponent $m$ in Theorem \ref{lem:Main-1} can be taken as $m=3$, so that the decay rate of the energy is at least $(1+t)^{-2/3}$. Analogously when $b_{2} \equiv 0$ and $\epsilon_{1} > 0$, then one can take $m = 4$ and the energy decays at least with the rate $(1+t)^{-1/2}$.
\qed
\end{remark}

\section{Wave equations in higher dimensions}\label{sec:Wave-dim-2}

Our next example of a damped wave equation for which a decay rate of the energy can be proven using Theorem~\ref{lem:Main-1}, with an explicitly computed rate of decay, 
for the solution of 
\begin{equation}\label{eq:System-22}
\begin{cases}
\partial_{tt}u - \Delta u + b_{1}\partial_{t}u - {\rm div}(b_{2}\nabla \partial_{t}u)= 0  & \mbox{ in } (0, \infty) \times \Omega, \\
u(t,\sigma)  = 0  & \mbox{ on } (0, \infty)\times \partial\Omega, \\
u(0,x)  = u_{0}(x) & \mbox{ in } \Omega \\
u_{t}(0,x)  = u_{1}(x) & \mbox{ in } \Omega,
\end{cases}
\end{equation}
where $b_{j} \in L^\infty(\Omega)$ are nonnegative functions.

As we shall se below, in order to find the adequate exponents $\gamma_{0}$ and $\gamma_{1}$ which are used in Theorem \ref{lem:Main-1}, one has to carry out a precise analysis of the behaviour of the eigenvalues and eigenfunctions of the underlying operator. As far as the Laplace operator is concerned, in a few cases one can perform this analysis, but even in those cases one sees that the exponents $\gamma_{0}$ and $\gamma_{1}$ depend in avery subtle way on the domain $\Omega$.

The following lemma takes of the condition \eqref{eq:Poly-growth-eigen} in the cases we study in this section.

\begin{lemma}\label{lem:Lim-1}
Let $\Omega := (0,L_{1})\times\cdots\times(0,L_{N}) \subset {\Bbb R}^N$ where $L_{j} > 0$ for $1\leq j \leq N$. Denoting by $(\lambda_{k})_{k\geq 1}$ the eigenvalues of the Laplacian operator with Dirichlet boundary conditions on $\Omega$, we have
$$\lim_{k\to\infty} {\lambda_{k+1} \over \lambda_{k}} = 1.$$
\end{lemma}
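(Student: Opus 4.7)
My plan is to exhibit an explicit cofinal subsequence of eigenvalues whose consecutive ratios tend to $1$, and then use a simple sandwiching argument to transfer this property to the full enumeration.

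First, separating variables in the Dirichlet problem on $\Omega$, the spectrum of $-\Delta$ is the set
$$\Lambda := \left\{\sum_{j=1}^{N} a_{j}\, n_{j}^{2} \; ; \; (n_{1},\ldots,n_{N})\in ({\Bbb N}^{*})^{N}\right\}, \qquad a_{j}:=\left(\pi/L_{j}\right)^{2},$$
and $(\lambda_{k})_{k\geq 1}$ is the strictly increasing enumeration of the distinct elements of $\Lambda$. Setting $c := \sum_{j=2}^{N} a_{j}$ (with the convention $c=0$ when $N=1$), I would introduce the cofinal subfamily
$$\mu_{n} := a_{1} n^{2} + c \in \Lambda, \qquad n \in {\Bbb N}^{*},$$
obtained by freezing $n_{2}=\cdots=n_{N}=1$ and letting $n_{1}=n$ vary. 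The sequence $(\mu_{n})_{n\geq 1}$ is strictly increasing, tends to $+\infty$, and a direct computation gives
$$\frac{\mu_{n+1}}{\mu_{n}} \;=\; 1 + \frac{a_{1}(2n+1)}{a_{1}n^{2} + c} \;\longrightarrow\; 1 \qquad \mbox{as }\; n\to\infty.$$

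Now comes the sandwiching step. Fix $k\geq 1$ and let $n = n(k)$ be the unique integer such that $\mu_{n} \leq \lambda_{k} < \mu_{n+1}$; such an $n$ exists because $\mu_{1} \leq \lambda_{1} \leq \lambda_{k}$ and $\mu_{n}\to\infty$. Since $\mu_{n+1}\in\Lambda$ is itself an eigenvalue strictly greater than $\lambda_{k}$, and since $\lambda_{k+1}$ is by definition the smallest element of $\Lambda$ strictly greater than $\lambda_{k}$, we get
$$1 \;\leq\; \frac{\lambda_{k+1}}{\lambda_{k}} \;\leq\; \frac{\mu_{n+1}}{\lambda_{k}} \;\leq\; \frac{\mu_{n+1}}{\mu_{n}}.$$
As $k\to\infty$, the inequality $\mu_{n(k)} \leq \lambda_{k}$ forces $n(k)\to\infty$, so the right-hand side tends to $1$, and the conclusion $\lambda_{k+1}/\lambda_{k}\to 1$ follows.

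There is no serious obstacle here: once one observes that one can construct a subsequence of eigenvalues by varying a single quantum number along a quadratic progression, the ratio of consecutive terms of that progression automatically tends to $1$, and the density argument above immediately transfers this to the full sequence. The argument makes no use of any arithmetic property of the ratios $L_{i}^{2}/L_{j}^{2}$; in particular it applies uniformly to both the rational and irrational cases discussed in Section~\ref{sec:Wave-dim-2}, as the finer distinctions between those cases will only play a role when one tries to quantify $\gamma_{0}$ and $\gamma_{1}$.
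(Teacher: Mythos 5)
Your proof is correct, and it takes a somewhat different route from the paper's. The paper works directly with the multi-index representation of $\lambda_{k}$ itself: writing $\lambda_{k_{j}} = \sum_{i} n_{ji}^{2}\pi^{2}/L_{i}^{2}$, it increments every coordinate $n_{ji}\mapsto n_{ji}+1$ to produce another eigenvalue $\lambda_{\ell_{j}}\geq \lambda_{k_{j}+1}$, bounds $\lambda_{k_{j}+1}/\lambda_{k_{j}}$ by the ratio of the shifted to the unshifted sum, and then sends this ratio to $1$ using the observation that $\max_{i} n_{ji}\to\infty$. You instead fix once and for all the cofinal quadratic progression $\mu_{n}=a_{1}n^{2}+c$ (all transverse quantum numbers frozen at $1$) and sandwich: $\lambda_{k+1}\leq\mu_{n(k)+1}$ and $\lambda_{k}\geq\mu_{n(k)}$ give $\lambda_{k+1}/\lambda_{k}\leq\mu_{n(k)+1}/\mu_{n(k)}\to 1$. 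What your version buys is that you never need any information about how $\lambda_{k}$ itself is represented (in particular you bypass the paper's step that the coordinates of its multi-index are unbounded); the only input is that the gaps $\mu_{n+1}-\mu_{n}=a_{1}(2n+1)$ are of lower order than $\mu_{n}$. One sentence to tighten: it is not the inequality $\mu_{n(k)}\leq\lambda_{k}$ by itself that forces $n(k)\to\infty$, but the other half of the defining property, $\lambda_{k}<\mu_{n(k)+1}$ (equivalently the maximality of $n(k)$), combined with $\lambda_{k}\to\infty$; as written the sentence points to the wrong inequality, though the fix is immediate. Like the paper, you rightly avoid invoking Weyl's law, which is not directly applicable here because the enumeration is by distinct eigenvalues with unknown multiplicities.
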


\begin{proof} 
The eigenvalues of the operator $L$ defined by $Lu:= -\Delta u$ with 
$$D(L) := \left\{u\in H^1_{0}(\Omega) \; ; \; \Delta u \in L^2(\Omega)\right\},$$ 
are given by
$${\widetilde \lambda}_{n} := {n_{1}^2 \pi^2 \over L_{1}^2} + \cdots + {n_{N}^2 \pi^2 \over L_{N}^2},\qquad\mbox{for }\, n = (n_{1},\ldots,n_{N})\in ({\Bbb N}^*)^N.$$
As before we denote by $(\lambda_{k})_{k\geq 1}$ the sequence of eigenvalues obtained upon reordering this family $({\widetilde \lambda}_{n})_{n}$, with the convention that each eigenvalue has multiplicity $m_{k} \geq 1$ and $\lambda_{k} < \lambda_{k+1}$.

If all the eigenvalues $\lambda_{k}$ were simple, we could use Weyl's formula, asserting that $\lambda_{k} \sim c_{*} k^{2/N}$ as $k\to \infty$ (see W.~Arendt \& al. \cite{ArendtW-Nittka-2009}, H. Weyl \cite{Weyl-1,Weyl-2}) where $c_{*}>0$ is a constant depending only on $L_{1},\ldots,L_{N}$. However, here we have made the convention that $\lambda_{k} < \lambda_{k+1}$, each eigenvalue $\lambda_{k}$ having multiplicity $m_{k} \geq 1$, and thus if one has no information on $m_{k}$, one cannot use Weyl's formula.

However in general the eigenvalues are not simple, implying that we cannot use directly Weyl's formula. Nevertheless the proof of the Lemma can be done in an elementary way: consider a sequence of integers $k_{j} \to +\infty$ as $j\to+\infty$. Then there exists a sequence of $N$-tuples of integers $n_{j} \in ({\Bbb N}*)^N$ such that 
$$\lambda_{k_{j}} = {n_{j1}^2 \pi^2 \over L_{1}^2} + \cdots + {n_{jN}^2 \pi^2 \over L_{N}^2}.$$
It is clear that necessarily there exists $\ell_{j} > k_{j}$ such that
$$\lambda_{\ell_{j}} = {(n_{j1} + 1)^2 \pi^2 \over L_{1}^2} + \cdots + {(n_{jN} + 1)^2 \pi^2 \over L_{N}^2},$$
and thus $\lambda_{k_{j}+1} \leq \lambda_{\ell_{j}}$. Thus we have
$$1 \leq {\lambda_{k_{j} + 1} \over \lambda_{k_{j}}} \leq 
\left(
{(n_{j1} + 1)^2\over L_{1}^2} + \cdots + 
{(n_{jN} + 1)^2 \over L_{N}^2}
\right)
\left(
{n_{j1}^2 \over L_{1}^2} + \cdots + 
{n_{jN}^2 \over L_{N}^2}\right)^{-1}.
$$
Since $k_{j} \to \infty$, we have $\max\left\{n_{ji}\; ; \; 1\leq i \leq N\right\} \to \infty$, and thus
$$\lim_{j\to\infty} 
\left(
{(n_{j1} + 1)^2\over L_{1}^2} + \cdots + 
{(n_{jN} + 1)^2 \over L_{N}^2}
\right)
\left(
{n_{j1}^2 \over L_{1}^2} + \cdots + 
{n_{jN}^2 \over L_{N}^2}\right)^{-1} = 1,$$
and the proof of the Lemma is complete.\qed
\end{proof}
\medskip
To illustrate how Theorem \ref{lem:Main-1} can be used, first we investigate the case of dimension $N = 2$ with a choice of the domains $\Omega,\Omega_{1}$ and $\Omega_{2}$ as follows
\begin{equation}\label{eq:Def-Omega-j}
\begin{cases}
\Omega := (0,\pi)\times(0,\pi),\\
\Omega_{1} := (\ell_{1},\ell_{1} + \delta_{1})\times (0,\pi), \\
\Omega_{2} := (\ell_{2},\ell_{2} + \delta_{2})\times (0,\pi),
\end{cases}
\end{equation}
where, for $j=1$ and $j=2$, it is assumed that $0\leq \ell_{j} < \ell_{j} + \delta_{j} \leq \pi$.

As an inspection of the proof of the following proposition shows, the exact same result holds when  one of the sets $\Omega_{1}$ or $\Omega_{2}$ is a horizontal strip, and also for any dimension $N \geq 2$ with $\Omega := (0,\pi)^N$ while the {\it damping subdomains\/} $\Omega_{1},\Omega_{2}$ are narrow strips of the above type, parallel to one of the axis and touching the boundary of $\Omega$. As we have mentioned before, one can also consider the case of an operator such as $Lu := -\Delta u$ with  boundary conditions which ensure that $L$ is self-adjoint and its least eigenvalue is positive (for instance mixed Neumann and Dirichlet boundary conditions, or of Fourier type, also called Robin type boundary condition). However, for the sake of clarity of exposition, we present the result, and its proof, only for the case $N=2$ and Dirichlet boundary conditions.

Then we can state the following:

\begin{proposition}\label{lem:Membrane-square}
Assume that $N = 2$ and the domains $\Omega,\Omega_{1},\Omega_{2}$  are as in \eqref{eq:Def-Omega-j}. For $j=1$ or $j=2$, let the  functions $b_{j} \in L^\infty(\Omega)$ be such that $b_{j} \geq \epsilon_{j} \geq 0$ on $\Omega_{j}$, where $\epsilon_{j}$ is a constant. Then, when $\epsilon_{2} > 0$, there exists a constant $c_{*} > 0$ such that the energy of the solution of \eqref{eq:System-22} satisfies
\begin{equation}\label{eq:Decay-N-2}
\|\nabla u(t,\cdot)\|^2 + \|\partial_{t}u(t,\cdot)\|^2 \leq c_{*}\, (1+t)^{-2/5}\left[
\|\nabla u_{0}\|^2 + \|u_{1}\|^2\right].
\end{equation}
When $\epsilon_{1} > 0$ and $b_{2} \equiv 0$, one has
\begin{equation}\label{eq:Decay-N-22}
\|\nabla u(t,\cdot)\|^2 + \|\partial_{t}u(t,\cdot)\|^2 \leq c_{*}\, (1+t)^{-2/7}\left[
\|\nabla u_{0}\|^2 + \|u_{1}\|^2\right].
\end{equation}

\end{proposition}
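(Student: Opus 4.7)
The plan is to realize this proposition as a direct application of Theorem \ref{lem:Main-1}, by computing the exponents $\gamma_0$ and $\gamma_1$ for the Dirichlet Laplacian on the square $\Omega = (0,\pi)^2$. Here the eigenvalues of $L = -\Delta$ are the integers of the form $\lambda = n^2 + m^2$ with $n,m \geq 1$, and the corresponding orthonormal eigenfunctions are $\phi_{n,m}(x,y) = (2/\pi)\sin(nx)\sin(my)$. Condition \eqref{eq:Poly-growth-eigen} is provided by Lemma \ref{lem:Lim-1}, so only \eqref{eq:Cnd-beta-k} and \eqref{eq:Cnd-alpha-k} remain to be checked.

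For the spectral-gap condition \eqref{eq:Cnd-alpha-k}, I would exploit that the distinct $\lambda_k$ are positive integers, hence $\lambda_k - \lambda_{k-1} \geq 1$ and $\lambda_{k+1} - \lambda_k \geq 1$. Combined with Lemma \ref{lem:Lim-1}, which gives $\lambda_{k+1} \leq C\lambda_k$ for some constant $C$, this yields
$$\frac{\lambda_{k-1}}{\lambda_k - \lambda_{k-1}} + \frac{\lambda_{k+1}}{\lambda_{k+1} - \lambda_k} \leq \lambda_{k-1} + \lambda_{k+1} \leq 2C\,\lambda_k,$$
so we may take $\gamma_1 = 1$. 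This is the easy part.

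For the concentration condition \eqref{eq:Cnd-beta-k}, fix $k$ and an arbitrary element $\phi = \sum_{n^2+m^2 = \lambda_k} c_{n,m}\phi_{n,m}$ of the eigenspace with $\sum|c_{n,m}|^2 = 1$. The key observation is that because $\Omega_1$ and $\Omega_2$ are \emph{vertical} strips spanning the full segment $(0,\pi)$ in the $y$-variable, the orthogonality of $\{\sin(my)\}_{m\geq 1}$ and $\{\cos(my)\}_{m\geq 1}$ on $(0,\pi)$ kills all cross-terms in $\int_{\Omega_j}|\phi|^2\,dxdy$ and $\int_{\Omega_j}|\nabla\phi|^2\,dxdy$ corresponding to distinct values of $m$; and when two terms share the same $m$, the constraint $n^2 + m^2 = \lambda_k$ forces them to share $n$ as well. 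Hence both quantities split as $\sum |c_{n,m}|^2$ times an integral involving only the single eigenfunction $\phi_{n,m}$, reducing the problem to a uniform lower bound on one eigenfunction.

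For the Kelvin--Voigt case ($\epsilon_2 > 0$), a direct computation gives
$$\int_{\Omega_2}|\nabla\phi_{n,m}|^2\,dxdy = \frac{2n^2}{\pi}\int_{\ell_2}^{\ell_2+\delta_2}\!\!\cos^2(nx)\,dx + \frac{2m^2}{\pi}\int_{\ell_2}^{\ell_2+\delta_2}\!\!\sin^2(nx)\,dx.$$
I would split into two cases: for $n \geq 2/\delta_2$, both trigonometric integrals exceed $\delta_2/4$, yielding a lower bound $c(n^2+m^2) = c\lambda_k$; for the finitely many smaller values of $n$, the integral of $\sin^2(nx)$ is a fixed positive constant depending only on $n$ (not vanishing since $\sin(nx)$ is nonzero on any subinterval of positive measure), and the factor $m^2$ still provides the bound $\geq c\lambda_k$. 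Taking the minimum over these finitely many values gives the uniform bound $\langle B\phi,\phi\rangle \geq \epsilon_2 c\lambda_k$, so $\beta_k \leq C\lambda_k^{-1}$ and $\gamma_0 = -1$. Plugging into Theorem \ref{lem:Main-1} yields $m = 3 + 2(-1) + 4(1) = 5$, which gives \eqref{eq:Decay-N-2}. For the viscous case ($b_2 \equiv 0$, $\epsilon_1 > 0$), the analogous computation reduces to $\int_{\Omega_1}|\phi_{n,m}|^2\,dxdy = (2/\pi)\int_{\ell_1}^{\ell_1+\delta_1}\sin^2(nx)\,dx$, which by the same small/large $n$ dichotomy is bounded below by a positive constant independent of $(n,m)$; thus $\beta_k \leq C$, $\gamma_0 = 0$, and $m = 7$, proving \eqref{eq:Decay-N-22}. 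The only delicate point is the small-$n$ bookkeeping, handled easily by finiteness.
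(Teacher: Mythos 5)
Your proposal is correct and follows essentially the same route as the paper: verify the hypotheses of Theorem \ref{lem:Main-1} with $\gamma_{1}=1$ from the integrality of the eigenvalues (plus Lemma \ref{lem:Lim-1}), and $\gamma_{0}=-1$ (resp.\ $\gamma_{0}=0$) by using the vanishing of the cross-terms, via orthogonality in the $x_{2}$-variable together with the fact that $n\neq m$ in $J_{k}$ forces $n_{2}\neq m_{2}$, and a uniform lower bound on the diagonal terms. Your small-$n$/large-$n$ dichotomy just makes explicit the uniform lower bound on $\int_{\ell_{j}}^{\ell_{j}+\delta_{j}}\cos^{2}(n_{1}x_{1})\,dx_{1}$ and $\int_{\ell_{j}}^{\ell_{j}+\delta_{j}}\sin^{2}(n_{1}x_{1})\,dx_{1}$ that the paper asserts without detail.
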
 

\proof Setting $Lu:= -\Delta u$ with 
$$D(L) := \left\{u\in H^1_{0}(\Omega) \; ; \; \Delta u \in L^2(\Omega)\right\},$$ 
the eigenvalues and eigenfunctions of the operator $L$ are given by
\begin{equation}\label{eq:Eigen-square}
\widetilde{\lambda}_{n} := n_{1}^2 + n_{2}^2, \quad
\phi_{n}(x) := {2 \over \pi}\sin(n_{1}x_{1})\sin(n_{2}x_{2}), \quad
\mbox{for }\, n \in {\Bbb N}^* \times {\Bbb N}^*.
\end{equation}
Rearranging these eigenvalues $\widetilde{\lambda}_{n}$ in an increasing order, we denote them by $(\lambda_{k})_{k\geq 1}$, the multiplicity of each $\lambda_{k}$ being 
\begin{equation}\label{eq:Def-J-k}
m_{k} :={\rm card}(J_{k}), \qquad\mbox{where }\; J_{k} := \left\{n \in {\Bbb N}^*\times {\Bbb N}^* \; ; \; n_{1}^2 + n_{2}^2 = \lambda_{k}\right\}.
\end{equation}
To begin with the verification of the conditions of Theorem \ref{lem:Main-1}, we recall that Lemma \ref{lem:Lim-1} ensures that we have $\lim_{k\to \infty} \lambda_{k}/\lambda_{k+1} = 1$, and thus condition \eqref{eq:Poly-growth-eigen} is satisfied.

Observe also that each $\lambda_{k}$ being an integer, we have $\lambda_{k+1} - \lambda_{k} \geq 1$, and thus there exists a constant $c_{*} > 0$ such that for all $k \geq 1$ we have
$${\lambda_{k-1} \over \lambda_{k} - \lambda_{k-1}} + {\lambda_{k+1} \over \lambda_{k+1} - \lambda_{k}} \leq c_{*} \, \lambda_{k},$$
and therefore condition \eqref{eq:Cnd-alpha-k} is also satisfied with $\gamma_{1} = 1$.
\medskip

When $\epsilon_{2} > 0$, in order to verify condition \eqref{eq:Cnd-beta-k}, it is sufficient to show that there exist $\gamma_{0} \in {\Bbb R}$ and some constant $c_{*} > 0$, such that for any $k \geq 1$ and any $\phi \in N(L-\lambda_{k}I)$ with $\|\phi\| = 1$ we have
\begin{equation}\label{eq:beta-2}
\int_{\Omega_{2}}|\nabla\phi(x)|^2dx \geq c_{*} \lambda_{k}^{-\gamma_{0}}.
\end{equation}
Since the family $(\phi_{n})_{n\in J_{k}}$ is a Hilbert basis of the finite dimensional space $N(L- \lambda_{k}I)$, we have
\begin{equation}\label{eq:phi-k}
\phi \in N(L-\lambda_{k}I),\; \|\phi\| = 1 
\iff 
\phi = \sum_{n\in J_{k}}c_{n}\phi_{n}\mbox{ with } \sum_{n\in J_{k}} |c_{n}|^2 = 1.
\end{equation}
Thus we have
\begin{align}\label{eq:grad-sum}
\int_{\Omega_{2}}\!\!|\nabla\phi(x)|^2dx &= \sum_{n\in J_{k}}|c_{n}|^2\int_{\Omega_{2}}\!\!|\nabla\phi_{n}(x)|^2dx \nonumber\\
& \qquad + \sum_{n,m \in J_{k}\atop n\neq m}c_{n}\overline{c_{m}}\int_{\Omega_{2}}\!\! \nabla\phi_{n}(x)\cdot\nabla\phi_{m}(x)\, dx. 
\end{align}
Now it is clear that we have
$$\int_{\Omega_{2}}|\partial_{1}\phi_{n}(x)|^2dx = {4n_{1}^2 \over \pi^2} \int_{0}^\pi \int_{\ell_{2}}^{\ell_{2}+\delta_{2}}\cos^2(n_{1}x_{1})\,\sin^2(n_{2}x_{2})\,dx_{1}dx_{2},$$
which yields 
$$\int_{\Omega_{2}}|\partial_{1}\phi_{n}(x)|^2dx = {2n_{1}^2 \over \pi} \int_{\ell_{2}}^{\ell_{2}+\delta_{2}}\cos^2(n_{1}x_{1})\, dx_{1} .$$
Analogously, we have
$$\int_{\Omega_{2}}|\partial_{2}\phi_{n}(x)|^2dx = {2n_{2}^2 \over \pi} \int_{\ell_{2}}^{\ell_{2}+\delta_{2}}\sin^2(n_{1}x_{1})\, dx_{1} ,$$
and thus one can find a constant $c_{*} > 0$ such that for all $k \geq 1$ and all $n \in J_{k}$, we have
\begin{equation}\label{eq:grad-nn}
\int_{\Omega_{2}}|\nabla\phi_{n}(x)|^2dx \geq c_{*}\,\delta_{2}\,(n_{1}^2 + n_{2}^2) = c_{*}\,\delta_{2}\,\lambda_{k}.
\end{equation}
Regarding the second sum in \eqref{eq:grad-sum}, taking $n,m\in J_{k}$ and $n \neq m$, we observe that since $n_{1}^2 + n_{2}^2 = m_{1}^2 + m_{2}^2$, we have necessarily $n_{2}\neq m_{2}$ and therefore,
\begin{align*}
&\int_{\Omega_{2}}\partial_{1}\phi_{n}(x)\partial_{1}\phi_{m}(x)\,dx =& \\
&{4n_{1}m_{1} \over \pi^2}\int_{0}^\pi\int_{\ell_{2}}^{\ell_{2}+\delta_{2}}\cos(n_{1}x_{1})\cos(m_{1}x_{1})\, dx_{1}\sin(n_{2}x_{2})\sin(m_{2}x_{2})\, dx_{2} = 0.
\end{align*}
In the same manner one may see that
\begin{align*}
&\int_{\Omega_{2}}\partial_{2}\phi_{n}(x)\partial_{2}\phi_{m}(x)\,dx =& \\
&{4n_{2}m_{2} \over \pi^2}\int_{0}^\pi\int_{\ell_{2}}^{\ell_{2}+\delta_{2}}\sin(n_{1}x_{1})\sin(m_{1}x_{1})\, dx_{1}\cos(n_{2}x_{2})\cos(m_{2}x_{2})\, dx_{2} = 0.
\end{align*}
Finally one sees that for all $n,m\in J_{k}$ such that $n\neq m$ we have
$$\int_{\Omega_{2}}\nabla\phi_{n}(x)\cdot\nabla\phi_{m}(x)\, dx = 0,$$
so that reporting this and \eqref{eq:grad-nn} into \eqref{eq:grad-sum} we have, for all $\phi\in N(L- \lambda_{k}I)$ with $\|\phi\| = 1$,
$$\int_{\Omega_{2}}|\nabla\phi(x)|^2dx \geq c_{*}\, \delta_{1}\,\lambda_{k},$$
which means that, when $\epsilon_{2} > 0$, the inequality \eqref{eq:beta-2}, and thus \eqref{eq:Cnd-beta-k}, is satisfied with $\gamma_{0} = -1$.

Therefore, when $\epsilon_{2} > 0$ we have $m := 3 + 2\gamma_{0} + 4\gamma_{1} = 5$, and \eqref{eq:Decay-N-2} holds.

\medskip
When $b_{2} \equiv 0$ and $\epsilon_{1} > 0$, proceeding as above, one checks easily that using \eqref{eq:phi-k} there exists a constant $c_{*} > 0$ such that for $\phi \in N(L - \lambda_{k}I)$ and $\|\phi\| = 1$ we have
$$\int_{\Omega_{1}}|\phi(x)|^2\,dx \geq c_{*}.$$
Thus we may take $\gamma_{0} = 0$, so that $m = 3 + 2\gamma_{0} + 4\gamma_{1} = 7$, yielding \eqref{eq:Decay-N-22}, and the proof of our claim is complete.
\qed
\medskip

As a matter of fact, one sees that in order to establish a decay result for other domains $\Omega \subset {\Bbb R}^N$ and $N \geq 2$, there are two issues which should  be inspected carefully: the first one is an estimate of $(\lambda_{k+1} - \lambda_{k})$ from below (comparing it with a power of $\lambda_{k}$) and this is related to the concentration properties of the eigenvalues as $k\to\infty$. The second issue is to obtain an estimate of the local norm of an eigenfunction $\phi$ on $\Omega_{1}$, or that of $\nabla\phi$ on $\Omega_{2}$, and this is related to the concentration properties of the eigenfunctions of the Laplacian. 

Regarding the first issue we shall use the following lemma for the special case of two dimenional rectangles.

\begin{lemma}\label{lem:Gap-1}
Let $\xi > 0$ be a real number and for $n\in ({\Bbb N}^*)^2$ denote $\mu_{n}(\xi) := n_{1}^2 + \xi n_{2}^2$ and
$$\delta(\xi) := \inf\left\{|\mu_{n} - \mu_{m}| \; ; \; n,m \in ({\Bbb N}^*)^2,\; \mu_{n}\neq\mu_{m}\right\}.$$
Then we have $\delta(\xi) \geq 1/q$ if $\xi = p/q$ where $p,q \geq 1$ are integers and mutually prime, while $\delta(\xi) = 0$ if $\xi$ is irrational.
\end{lemma}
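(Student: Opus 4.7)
The plan is to treat the two cases by quite different tools. The rational bound $\delta(\xi)\geq 1/q$ is essentially a clearing-of-denominators, while the irrational statement $\delta(\xi)=0$ rests on an equidistribution argument producing nonzero values of $\mu_{n}(\xi)-\mu_{m}(\xi)$ of arbitrarily small modulus.

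For the rational case, write $\xi=p/q$ with $p,q\geq 1$ coprime. For any $n,m\in({\Bbb N}^*)^2$,
$$q\bigl(\mu_{n}(\xi)-\mu_{m}(\xi)\bigr)=q(n_{1}^2-m_{1}^2)+p(n_{2}^2-m_{2}^2)\in{\Bbb Z}.$$
If $\mu_{n}\neq\mu_{m}$ this is a nonzero integer, hence of absolute value at least $1$, giving $|\mu_{n}-\mu_{m}|\geq 1/q$ and $\delta(\xi)\geq 1/q$.

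For irrational $\xi$, my plan is to construct, for every $\epsilon>0$, a pair $(n,m)\in ({\Bbb N}^*)^2\times({\Bbb N}^*)^2$ with $0<|\mu_{n}(\xi)-\mu_{m}(\xi)|<\epsilon$. I would take consecutive integer pairs of opposite orderings, say $n_{1}=m_{1}+1$ and $n_{2}=m_{2}-1$ with $m_{1}\geq 1$ and $m_{2}\geq 2$. Setting $u:=2m_{1}+1$ and $v:=2m_{2}-1$, both odd and $\geq 3$, one checks
$$\mu_{n}(\xi)-\mu_{m}(\xi)=u-\xi v,$$
a quantity that is automatically nonzero for any positive integers $u,v$ because $\xi$ is irrational. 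The problem thus reduces to producing odd positive integers $u,v\geq 3$ with $|\xi v-u|$ arbitrarily small.

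This reduction is achieved by proving that $\{(2k+1)\xi \bmod 2\}_{k\geq 1}$ is equidistributed in $[0,2)$. By Weyl's criterion, for any integer $\ell\neq 0$,
$$\frac{1}{N}\sum_{k=1}^{N}e^{\pi {\rm i}\, \ell (2k+1)\xi}=\frac{e^{\pi {\rm i}\,\ell\xi}}{N}\sum_{k=1}^{N}e^{2\pi {\rm i}\,\ell k\xi}\longrightarrow 0\quad\text{as }N\to\infty,$$
since $\ell\xi\notin{\Bbb Z}$. Consequently, for any $\epsilon>0$ there are arbitrarily large odd integers $v=2k+1$ for which $v\xi$ lies within $\epsilon$ of some odd integer $u$, which is necessarily $\geq 3$ once $v$ is large enough. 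This yields the desired approximation and hence the pair $(n,m)$. The only obstacle, and it is a mild one, is the parity restriction on $u$ and $v$ inherited from the differences of consecutive integer squares; this is precisely what the equidistribution modulo $2$, rather than the more classical equidistribution of $\{k\xi\}$ modulo $1$, is designed to overcome, since continued-fraction convergents $p_{n}/q_{n}$ of $\xi$ can have arbitrary parities.
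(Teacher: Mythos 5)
Your proof is correct, and the rational case coincides with the paper's argument; the irrational case, however, takes a genuinely different route. You perturb a single unit in each coordinate ($n_{1}=m_{1}+1$, $n_{2}=m_{2}-1$), so the difference $\mu_{n}-\mu_{m}=u-\xi v$ has \emph{odd} coefficients $u=2m_{1}+1$, $v=2m_{2}-1$, and you must then produce odd $u,v$ with $|\xi v-u|$ small; this parity constraint forces you to invoke Weyl's criterion to get equidistribution of $(2k+1)\xi$ modulo $2$, which you do correctly (the geometric-sum estimate is valid since $\ell\xi\notin{\Bbb Z}$, and taking $v$ large guarantees $u\geq 3$, while irrationality of $\xi$ makes $u-\xi v$ automatically nonzero). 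The paper instead perturbs \emph{both} indices symmetrically, taking $n=(2k+1,2j-1)$ and $m=(2k-1,2j+1)$, so that $\mu_{n}-\mu_{m}=8(k-j\xi)$; the coefficients are then arbitrary positive integers (up to the factor $8$), no parity issue arises, and the only input needed is the elementary density of ${\Bbb Z}+\xi{\Bbb Z}$ in ${\Bbb R}$, together with the observation that a small positive element $k'+j'\xi$ must have $k'j'<0$. In short, your construction buys nothing extra but costs a heavier tool (equidistribution rather than mere density of a subgroup of ${\Bbb R}$); conversely, it is a nice illustration that even with the parity obstruction the result still follows. Note also that you do not need full equidistribution, only density of $\{(2k+1)\xi \bmod 2\}$, so your argument could be lightened, though as written it is complete and correct.
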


\proof If $\xi = p/q$ for two mutually prime integers $p,q\geq 1$, then we have
$$|\mu_{n} - \mu_{m}| = {1 \over q}\left|q(n_{1}^2 - m_{1}^2) + p(n_{2}^2 - m_{2}^2)\right| \geq {1 \over q}\, ,$$
because $q(n_{1}^2 - m_{1}^2) + p(n_{2}^2 - m_{2}^2) \in {\Bbb Z}^*$, and any non zero integer has an absolute value greater or equal to $1$. 

If $\xi \notin {\Bbb Q}$, then the subgroup ${\Bbb Z} + \xi{\Bbb Z}$ is dense in ${\Bbb R}$ and for any $\epsilon > 0$, with $\epsilon < \min(1,\xi)$, there exist two integers $k',j' \in {\Bbb Z}$ such that $0 < k' + j'\xi < \epsilon/8$. One easily sees that necessarily we must have $k'j' < 0$, and thus, without loss of generality, we may assume that we are given two integers $k,j \geq 1$ such that
$$0 < k - j\xi < {\epsilon \over 8}.$$
(This corresponds to the case $k' > 0$ and $j'<0$; when $k' < 0$ and $j' > 0$ one can adapt the argument which follows). Choosing now
$$n := (2k+1,2j-1), \qquad m := (2k-1,2j+1),$$
one verifies that $\mu_{n}(\xi) - \mu_{m}(\xi) = 8k -8j\xi \in (0,\epsilon)$, and thus $\delta(\xi) \leq \epsilon$. We conclude that as a matter of fact we have $\delta(\xi) = 0$. \qed
\medskip

For the case of a rectangle $\Omega = (0,L_{1})\times (0,L_{2})$ we recalled previously that the eigenvalues of the Laplace operator with Dirichlet boundray conditions are given by
$${n_{1}^2 \pi^2\over L_{1}^2} + {n_{2}^2 \pi^2\over L_{2}^2},$$
with $(n_{1},n_{2}) \in {\Bbb N}^*\times{\Bbb N}^*$. Using the above lemma we conclude that when $\Omega$ is such a rectangle and $L_{1}^2/L_{2}^2 \in {\Bbb Q}$, we can take again the exponent $\gamma_{1} = 1$ appearing in \eqref{eq:Cnd-alpha-k}, yielding the same decay estimate for the energy, provided that $\Omega_{1}$ and $\Omega_{2}$ are strips of the form $(\ell_{j},\ell_{j}+\delta_{j})\times(0,L_{2})$ with $0\leq \ell_{j} < \ell_{j} + \delta_{j} \leq L_{j}$ (in which case the exponent $\gamma_{0}$ in \eqref{eq:Cnd-beta-k} is $-1$ when $\epsilon_{2} > 0$, or $0$ when $b_{2} \equiv 0$ and $\epsilon_{1} > 0$).
Thus we can state the following:
\begin{corollary}\label{lem:Decay-N-2-gen}
Assume that $\Omega = (0,L_{1})\times (0,L_{2})$ and $(L_{1}/L_{2})^2 \in {\Bbb Q}$. then the gap between the eigenvalues is bounded below, more precisely, 
$$\lambda_{k+1} - \lambda_{k} \geq {\pi^2 \over L_{1}^2\, q}, \qquad\mbox{if }\, {L_{1}^2 \over L_{2}^2} = {p \over q},$$
with $p$ and $q$ mutually prime. Moreover, if $\Omega_{j} := (\ell_{j},\ell_{j}+\delta_{j})\times(0,L_{2})$ with $0\leq \ell_{j} < \ell_{j} + \delta_{j} \leq L_{1}$, the results of Proposition \ref{lem:Membrane-square} are valid for the solution of \eqref{eq:System-22} on $\Omega$.
\end{corollary}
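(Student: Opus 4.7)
The plan is to proceed in three stages: first establish the quantitative gap estimate from Lemma~\ref{lem:Gap-1}, then verify the hypotheses of Theorem~\ref{lem:Main-1} with the same exponents $\gamma_0, \gamma_1$ obtained in Proposition~\ref{lem:Membrane-square}, and finally read off the decay rates. For the gap, I would factor each eigenvalue as
$$\lambda_n = \frac{\pi^2}{L_1^2}\left(n_1^2 + \frac{L_1^2}{L_2^2} n_2^2\right) = \frac{\pi^2}{L_1^2}\,\mu_n(\xi),$$
where $\xi := L_1^2/L_2^2 = p/q$ with $p, q$ coprime. Lemma~\ref{lem:Gap-1} yields $\delta(\xi) \geq 1/q$, which after rescaling gives exactly $\lambda_{k+1} - \lambda_k \geq \pi^2/(L_1^2 q)$.

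Next, I would verify the hypotheses of Theorem~\ref{lem:Main-1}. Condition \eqref{eq:Poly-growth-eigen} is immediate from Lemma~\ref{lem:Lim-1}. Since the gap $\lambda_{k+1}-\lambda_k$ is bounded below by a strictly positive constant independent of $k$, each summand in \eqref{eq:Cnd-alpha-k} is at most $C\lambda_k$, yielding $\gamma_1 = 1$. For condition \eqref{eq:Cnd-beta-k}, I would mimic verbatim the argument from Proposition~\ref{lem:Membrane-square}: the eigenfunctions on the rectangle are products
$$\phi_n(x) = \frac{2}{\sqrt{L_1 L_2}} \sin\!\left(\tfrac{n_1 \pi x_1}{L_1}\right) \sin\!\left(\tfrac{n_2 \pi x_2}{L_2}\right),$$
so for $\phi = \sum_{n\in J_k} c_n \phi_n$ with $\sum|c_n|^2 = 1$, the integral $\int_{\Omega_2}|\nabla\phi|^2$ (or $\int_{\Omega_1}|\phi|^2$) splits into diagonal terms and cross terms. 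The diagonal terms are bounded below by $c\,\delta_2\,\lambda_k$ in the Kelvin--Voigt case, or by $c\,\delta_1$ in the viscous case, exactly as before.

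The main obstacle, and the place where the strip geometry is essential, is showing that the cross terms vanish: for $n, m \in J_k$ with $n \neq m$, one must have $n_2 \neq m_2$ (since $n_1^2 + (L_1/L_2)^2 n_2^2 = m_1^2 + (L_1/L_2)^2 m_2^2$ together with $n_2 = m_2$ would force $n_1 = m_1$, a contradiction). Because $\Omega_j = (\ell_j, \ell_j+\delta_j) \times (0, L_2)$ spans the full height in $x_2$, the $x_2$-integrals of both $\sin(n_2\pi x_2/L_2)\sin(m_2\pi x_2/L_2)$ and $\cos(n_2\pi x_2/L_2)\cos(m_2\pi x_2/L_2)$ over $(0, L_2)$ vanish by orthogonality, killing every cross term. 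This yields $\gamma_0 = -1$ when $\epsilon_2 > 0$ and $\gamma_0 = 0$ when $b_2 \equiv 0$ and $\epsilon_1 > 0$, so $m = 3 + 2\gamma_0 + 4\gamma_1$ equals $5$ or $7$ respectively. Invoking Theorem~\ref{lem:Main-1} then gives the decay rates $(1+t)^{-2/5}$ and $(1+t)^{-2/7}$ announced in Proposition~\ref{lem:Membrane-square}.
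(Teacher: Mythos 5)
Your proposal is correct and follows essentially the same route as the paper: the paper likewise obtains the gap bound by rescaling Lemma \ref{lem:Gap-1} (writing $\lambda = (\pi^2/L_1^2)\,\mu_n(\xi)$ with $\xi = p/q$), takes $\gamma_1 = 1$ from the uniform gap together with Lemma \ref{lem:Lim-1}, and reuses the strip/orthogonality argument of Proposition \ref{lem:Membrane-square} to get $\gamma_0 = -1$ (Kelvin--Voigt case) or $\gamma_0 = 0$ (viscous case), hence $m = 5$ or $7$. Your write-up in fact supplies more detail (notably the observation that $n\neq m$ in $J_k$ forces $n_2\neq m_2$, so the cross terms vanish because the strips span the full height in $x_2$) than the paper's brief remarks preceding the corollary.
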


\begin{remark}
We should point out that when $L_{1}^2/L_{2}^2 \in {\Bbb Q}$, we take the domains $\Omega_{j}$ to be a strip which {\it touches\/} the boundary of $\partial\Omega$, in order to give a lower bound for 
$$\int_{\Omega_{2}}|\nabla\phi(x)|^2\,dx \qquad\mbox{or} \quad \int_{\Omega_{1}}|\phi(x)|^2dx,$$
for all $\phi\in N(L-\lambda_{k}I)$ with $\|\phi\| = 1$. Indeed, if $\Omega_{0}\subset\subset \Omega$ is an open subset, and $\lambda_{k}$ is not a simple eigenvalue, then, as far as we know, it is an open problem to give a lower bound in terms of $\lambda_{k}$ for
$$\int_{\Omega_{0}}|\nabla\phi(x)|^2\,dx \qquad\mbox{or} \quad \int_{\Omega_{0}}|\phi(x)|^2dx,$$
for all $\phi\in N(L-\lambda_{k}I)$ with $\|\phi\| = 1$ (however cf.\ D. S. Grebenkov \& B. T. Nguyen \cite{Grebenkov-Nguyen}, sections 6 and 7).
\qed
\end{remark}
\medskip

Actually one can easily generalize the above Lemma \ref{lem:Gap-1} so that it can be applied to the study of the gap between eigenvalues of the Laplacian on a domain $\Omega \in {\Bbb R}^N$ with $N \geq 3$, which is a product of $N$ intervals. 
The proof of the following statement is straightforward and can be omitted here (with the notations of the corollary, take $(n_{1},n_{2})\neq (m_{1},m_{2})$ and $n_{j} = m_{j}$ for $3 \leq j \leq N$, then apply Lemma \ref{lem:Gap-1}).

\begin{lemma}\label{lem:Gap-2}
Let $N \geq 3$ be an integer and $\xi_{j} > 0$ for $2 \leq j \leq N$. For $n \in ({\Bbb N}^*)^N$ denote $\mu_{n}(\xi) := n_{1}^2 + \sum_{j=2}^N \xi_{j} n_{j}^2$, and 
$$\delta(\xi) := \inf\left\{|\mu_{n} - \mu_{m}| \; ; \; n,m \in ({\Bbb N}^*)^N,\; \mu_{n}\neq\mu_{m}\right\}.$$
Then if there exists $j$ such that $\xi_{j} \notin {\Bbb Q}$ we have $\delta(\xi) = 0$, while if for all $j$ we have $\xi_{j} = p_{j}/q_{j}$ for two mutually prime integers $p_{j},q_{j} \geq 1$, we have
$$\delta(\xi) \geq {1 \over q},$$
where $q$ is the least common multiple of $q_{2},\ldots,q_{N}$. 
\end{lemma}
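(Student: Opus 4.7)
The plan is to address the two cases separately, mimicking the arguments in Lemma \ref{lem:Gap-1} and, in the irrational case, reducing directly to that lemma as the authors suggest.

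\textbf{Rational case.} Suppose all $\xi_j = p_j/q_j$ with $p_j, q_j \geq 1$ mutually prime, and set $q := \mathrm{lcm}(q_2,\ldots,q_N)$. For any $n, m \in ({\Bbb N}^*)^N$ one has
\[
q(\mu_n(\xi) - \mu_m(\xi)) = q(n_1^2 - m_1^2) + \sum_{j=2}^N \frac{q\, p_j}{q_j}(n_j^2 - m_j^2) \in {\Bbb Z},
\]
since each $q/q_j$ is a positive integer. If $\mu_n(\xi) \neq \mu_m(\xi)$, this integer is non-zero, hence $|q(\mu_n(\xi) - \mu_m(\xi))| \geq 1$, and therefore $\delta(\xi) \geq 1/q$.

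\textbf{Irrational case.} Suppose $\xi_{j_0} \notin {\Bbb Q}$ for some index $j_0 \in \{2,\ldots,N\}$. Fix arbitrary values $a_i \in {\Bbb N}^*$ for each $i \in \{2,\ldots,N\}\setminus\{j_0\}$, and restrict the search for extremal pairs $n, m$ to those vectors satisfying $n_i = m_i = a_i$ for all such $i$. The contributions from the frozen coordinates cancel in $\mu_n - \mu_m$, so
\[
\mu_n(\xi) - \mu_m(\xi) = (n_1^2 - m_1^2) + \xi_{j_0}(n_{j_0}^2 - m_{j_0}^2).
\]
This is exactly the two-variable quantity governed by Lemma \ref{lem:Gap-1} with parameter $\xi := \xi_{j_0}$. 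Since $\xi_{j_0}$ is irrational, that lemma produces, for any prescribed $\epsilon > 0$, pairs $(n_1, n_{j_0}) \neq (m_1, m_{j_0})$ in $({\Bbb N}^*)^2$ for which the above expression is non-zero and has absolute value less than $\epsilon$. Extending these by the fixed values $a_i$ yields $n, m \in ({\Bbb N}^*)^N$ with $\mu_n(\xi) \neq \mu_m(\xi)$ and $|\mu_n(\xi) - \mu_m(\xi)| < \epsilon$; letting $\epsilon \to 0$ gives $\delta(\xi) = 0$.

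I do not expect any substantive obstacle here: both parts reduce to straightforward arithmetic, and the authors' hint already identifies the correct reduction in the irrational case. The only mild point requiring care is to check that the vectors constructed in that case genuinely lie in $({\Bbb N}^*)^N$ and do realize \emph{distinct} values of $\mu$; both properties follow directly from the construction together with the conclusion of Lemma \ref{lem:Gap-1}.
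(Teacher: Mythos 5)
Your proposal is correct and follows essentially the same route the paper indicates: the rational case is the direct lcm arithmetic, and the irrational case freezes all coordinates except the first and the one carrying the irrational ratio, reducing to Lemma \ref{lem:Gap-1} exactly as the authors' parenthetical hint suggests (your version merely states the reduction for a general index $j_{0}$ rather than $j=2$). No gaps.
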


As a consequence, the result of Corollary \ref{lem:Decay-N-2-gen} is valid in any dimension $N \geq 2$. More precisely, for instance, we can state the following:

\begin{corollary} When 
the Kelvin--Voigt damping region $\Omega_{2}$ is a strip of the form $(\ell_{1},\ell_{1}+\delta_{1})\times(0,L_{2})\times\cdots\times(0,L_{N})$ with $0\leq \ell_{1} < \ell_{1} + \delta_{1} \leq L_{1}$ and $b_{2} \geq \epsilon_{2} >0$ on $\Omega_{2}$, 
 the rate of decay of the energy for the wave equation is also $(1+t)^{-2/5}$, provided that for all $1 \leq i<j \leq N$ the ratios $(L_{i}/L_{j})^2 \in {\Bbb Q}$.
\end{corollary}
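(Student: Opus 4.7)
The plan is to apply the abstract Theorem \ref{lem:Main-1}, which requires verifying three assumptions: the eigenvalue ratio condition \eqref{eq:Poly-growth-eigen}, the spectral gap bound \eqref{eq:Cnd-alpha-k} with exponent $\gamma_{1}$, and the concentration bound \eqref{eq:Cnd-beta-k} with exponent $\gamma_{0}$. The first condition is a direct consequence of Lemma \ref{lem:Lim-1} since $\Omega$ is a product of intervals. For the second condition, setting $\xi_{j} := (L_{1}/L_{j})^2$ for $2 \leq j \leq N$, the hypothesis that all ratios $(L_{i}/L_{j})^2$ are rational makes each $\xi_{j}$ rational, so Lemma \ref{lem:Gap-2} yields a uniform positive lower bound $\delta(\xi) \geq 1/q$ where $q$ is the least common multiple of the denominators. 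Translating back via $\lambda_{k} = (\pi^2/L_{1}^2)\mu_{n}(\xi)$, we obtain $\lambda_{k+1} - \lambda_{k} \geq \pi^2/(L_{1}^2 q)$, which immediately gives a constant $c_{*}$ such that $\lambda_{k-1}/(\lambda_{k}-\lambda_{k-1}) + \lambda_{k+1}/(\lambda_{k+1} - \lambda_{k}) \leq c_{*}\lambda_{k}$, so one may take $\gamma_{1} = 1$.

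The main work is to establish the concentration estimate
$$\int_{\Omega_{2}}|\nabla\phi(x)|^2\,dx \geq c_{*}\,\lambda_{k}$$
for every $\phi$ of unit norm in $N(L - \lambda_{k}I)$, so that $\langle B\phi,\phi\rangle \geq \epsilon_{2} c_{*}\lambda_{k}$ and $\gamma_{0} = -1$ is admissible. The eigenfunctions of the Dirichlet Laplacian on $\Omega$ form a Hilbert basis $\phi_{n}(x) = C\prod_{i=1}^N \sin(n_{i}\pi x_{i}/L_{i})$ indexed by $n \in ({\Bbb N}^*)^N$, and writing $\phi = \sum_{n \in J_{k}}c_{n}\phi_{n}$ with $\sum |c_{n}|^2 = 1$, I would first verify that all off-diagonal cross-terms $\int_{\Omega_{2}}\nabla\phi_{n}\cdot\nabla\phi_{m}\,dx$ with $n\neq m$ in $J_{k}$ vanish, exactly as in the proof of Proposition \ref{lem:Membrane-square}: since $\Omega_{2}$ extends across the full intervals $(0,L_{i})$ for $i \geq 2$, the integration in each of these variables produces Kronecker deltas forcing $n_{i} = m_{i}$ for all $i \geq 2$; combined with $n,m \in J_{k}$, this would force $n_{1} = m_{1}$ as well, contradicting $n \neq m$.

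It then suffices to show $\int_{\Omega_{2}}|\nabla\phi_{n}|^2\,dx \geq c \lambda_{k}$ for each single eigenfunction, and an explicit computation gives
$$\int_{\Omega_{2}}|\nabla\phi_{n}|^2dx = \frac{2\pi^2}{L_{1}}\left[\frac{n_{1}^2}{L_{1}^2} I_{c}(n_{1}) + \left(\frac{\lambda_{k}}{\pi^2}-\frac{n_{1}^2}{L_{1}^2}\right) I_{s}(n_{1})\right],$$
with $I_{c}(n_{1}) := \int_{\ell_{1}}^{\ell_{1}+\delta_{1}}\cos^2(n_{1}\pi x_{1}/L_{1})dx_{1}$ and $I_{s}(n_{1})$ the analogous integral with $\sin^2$. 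Here is the delicate step: one must obtain a lower bound independent of $n$. I would split into cases. For $n_{1} \geq 2L_{1}/(\pi\delta_{1})$, the elementary identity $\int \cos^2 = \delta_{1}/2 + O(L_{1}/n_{1})$ (and similarly for $\sin^2$) ensures $I_{c}(n_{1}),\, I_{s}(n_{1}) \geq \delta_{1}/4$, so the bracketed quantity is $\geq (\delta_{1}/4)(\lambda_{k}/\pi^2)$. For the finitely many remaining values $n_{1} \leq 2L_{1}/(\pi\delta_{1})$, the term $n_{1}^2/L_{1}^2$ is bounded, so for $\lambda_{k}$ sufficiently large the coefficient of $I_{s}(n_{1})$ is at least $\lambda_{k}/(2\pi^2)$, while $I_{s}(n_{1})$ achieves a strictly positive minimum over this finite index set (noting $n_{1} \geq 1$ prevents the sine from vanishing identically on $(\ell_{1},\ell_{1}+\delta_{1})$). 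In both cases $\int_{\Omega_{2}}|\nabla\phi_{n}|^2\,dx \geq c \lambda_{k}$.

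Gathering the exponents $\gamma_{0} = -1$ and $\gamma_{1} = 1$, Theorem \ref{lem:Main-1} yields $m = 3 + 2\gamma_{0} + 4\gamma_{1} = 5$, producing the announced energy decay rate $(1+t)^{-2/5}$. The main obstacle I anticipate is the case analysis controlling $I_{c},I_{s}$ for small $n_{1}$ uniformly in the remaining components $n_{2},\ldots,n_{N}$; this is handled because the small-$n_{1}$ regime automatically forces large $\sum_{j \geq 2} n_{j}^2/L_{j}^2$ whenever $\lambda_{k}$ is large, so the $I_{s}$ contribution dominates.
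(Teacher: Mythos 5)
Your proof is correct and follows exactly the route the paper intends: the paper states this corollary without a detailed proof, as a consequence of Lemma \ref{lem:Gap-2} (giving $\gamma_{1}=1$ for the gap condition \eqref{eq:Cnd-alpha-k}) combined with the argument of Proposition \ref{lem:Membrane-square} (giving $\gamma_{0}=-1$ via the vanishing of the cross-terms over the strip and the diagonal estimate $\int_{\Omega_{2}}|\nabla\phi_{n}|^2dx \geq c\,\lambda_{k}$), which is precisely what you carry out, so that $m = 3 + 2\gamma_{0} + 4\gamma_{1} = 5$. Your explicit case analysis on $I_{c}(n_{1})$ and $I_{s}(n_{1})$ simply supplies the uniformity-in-$n$ details that the paper leaves implicit.
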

\medskip

Next we consider the case of a domain $\Omega := (0,L_{1})\times (0,L_{2})$ such that if $\xi := L_{1}^2/L_{2}^2 \notin {\Bbb Q}$. In this case the exponent $\gamma_{1}$ in \eqref{eq:Cnd-alpha-k} cannot be taken equal to $1$, and a further analysis is necessary. As a matter of fact, Lemma \ref{lem:Gap-1} shows that $\inf_{k \geq 1}(\lambda_{k+1} - \lambda_{k}) = 0$, and therefore the best we can hope for is to find an estimate of the type
$$\lambda_{k+1} - \lambda_{k} \geq c_{0}\, \lambda_{k}^{-\tau} \, ,$$
for some $c_{0} > 0$ and $\tau > 0$ independent of $k$. To this end, we recall that the degree of an algebraic number $\xi$ is the minimal degree of all polynomials $P$ with integer coefficients such that $P(\xi) = 0$.  The following result of 
K. F.~Roth~\cite{Roth-1955} (see Y. Bugeaud \cite{Bugeaud}, chapter 2, Theorem 2.1, page 28) states how well, or rather how badly, as algebraic number of degree greater or equal to two can be approximated by rational numbers:

\begin{theorem}\label{lem:Roth}
{\bf (Roth's Theorem)} Let $\xi > 0$ be an algebraic number of degree greater or equal to two. Then for any $\epsilon > 0$ there exists a positive constant $c(\xi,\epsilon) > 0$ such that for any rational number $p/q$ with $q \geq 1$ one has
\begin{equation}\label{eq:Roth}
\left| \xi - {p \over q}\right| > {c(\xi,\epsilon) \over q^{2+\epsilon}}\, .
\end{equation}
\end{theorem}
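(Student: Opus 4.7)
The plan is to follow the classical Thue--Siegel--Roth strategy via a proof by contradiction that goes through the construction of an auxiliary polynomial in many variables. Suppose no constant $c(\xi,\epsilon)$ works for the given $\epsilon > 0$; then there exist infinitely many rationals $p/q$ (with $\gcd(p,q) = 1$ and $q \geq 1$) for which $|\xi - p/q| \leq q^{-(2+\epsilon)}$. By extraction, one picks a sequence $(p_n/q_n)_{n \geq 1}$ of such approximations with $q_n$ growing arbitrarily fast, so that for any prescribed integer $M$ we may require $q_{n+1} \geq q_n^M$. The whole argument consists in choosing parameters $m \in {\Bbb N}^*$ and a multi-degree $(r_1,\ldots,r_m) \in ({\Bbb N}^*)^m$, and then using an integer polynomial in $m$ variables to derive a contradiction.

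The first main step is the construction of an auxiliary polynomial $P(X_1,\ldots,X_m) \in {\Bbb Z}[X_1,\ldots,X_m]$ of multi-degree at most $(r_1,\ldots,r_m)$, where the $r_j$'s are chosen so that $r_j \log q_j$ is roughly independent of $j$. A Siegel-lemma pigeonhole argument on the lattice of integer polynomials of bounded multi-degree furnishes a non-zero such $P$, of modest height compared to that of $\xi$, whose \emph{index} at the diagonal point $(\xi,\ldots,\xi)$ relative to the weights $(r_1,\ldots,r_m)$ is at least $m/2 - O(\sqrt{m})$; equivalently, a large proportion of weighted partial derivatives of $P$ vanish at $(\xi,\ldots,\xi)$.

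The second step is to transfer this information to the rational point. Taylor-expanding the relevant derivatives of $P$ around $(\xi,\ldots,\xi)$ and inserting $|\xi - p_j/q_j| \leq q_j^{-(2+\epsilon)}$, one finds that many weighted derivatives of $P$ at $(p_1/q_1,\ldots,p_m/q_m)$ are very small. After clearing denominators by multiplying with $\prod_j q_j^{r_j}$, one obtains integers whose absolute value is strictly less than $1$, and hence zero. Consequently the index of $P$ at the rational point $(p_1/q_1,\ldots,p_m/q_m)$ is also forced to be of order $m/2 - O(\sqrt{m})$, provided $\epsilon > 0$ and $m$ have been chosen suitably.

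The final and decisive ingredient is \emph{Roth's lemma}, which gives an essentially sharp upper bound of order $O(\sqrt{m})$ for the index of any non-zero integer polynomial at a rational point $(p_1/q_1,\ldots,p_m/q_m)$ in terms of its height and multi-degrees, under the rapid growth condition $q_{j+1} \geq q_j^M$. Comparing this with the lower bound $m/2 - O(\sqrt{m})$ obtained above yields a contradiction once $m$ is sufficiently large depending on $\epsilon$. The genuine obstacle, and the real depth of the theorem, is the proof of Roth's lemma itself: it is established by a delicate induction on the number of variables, descending from $m$ variables to fewer via generalized Wronskian determinants and a careful analysis of the factorization of polynomials at rational points. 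Everything else in the scheme is, by comparison, bookkeeping; this step is where Roth's theorem acquires its genuinely non-effective character and its remarkable strength.
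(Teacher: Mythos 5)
You should first note that the paper itself does not prove this statement: it is Roth's theorem, quoted as a known result from the literature (Roth, 1955; see Bugeaud, Chapter 2, Theorem 2.1), and it is used in the paper only as a black box to obtain Lemma \ref{lem:Estim-Roth}. So there is no in-paper proof to compare yours against; the only question is whether your text constitutes a proof on its own.

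It does not, although it is an accurate summary of the standard Thue--Siegel--Roth strategy. What you have written is a roadmap in which every load-bearing step is asserted rather than established. Concretely: (i) the construction of the auxiliary polynomial requires a counting argument (Siegel's lemma) in which the number of vanishing conditions imposed at $(\xi,\ldots,\xi)$ --- which depends on the degree of $\xi$ and on the proportion of weighted derivatives forced to vanish --- must be checked to be strictly smaller than the number of available integer coefficients, and the lower bound $m/2 - O(\sqrt{m})$ for the index rests on a combinatorial concentration estimate for weighted sums that you do not state or prove; (ii) the transfer of the index to the rational point requires verifying a chain of explicit inequalities relating $\epsilon$, $m$, the heights $q_j$, the multi-degrees $r_j$ (chosen with $r_j\log q_j$ nearly constant), and the height of $P$, and it is precisely here that the hypothesis $|\xi - p_j/q_j|\le q_j^{-(2+\epsilon)}$ with exponent $2+\epsilon$ (rather than $2$) is consumed --- none of this bookkeeping is carried out; (iii) Roth's lemma, which you yourself identify as the genuine depth of the theorem, is only named, with no indication of the Wronskian induction beyond a phrase. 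Since the entire mathematical content of the theorem lives in these three steps, the proposal cannot be accepted as a proof; it is a correct description of where a proof can be found, which is essentially what the paper does by citation.
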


We use this result in order to give an estimate of $\lambda_{k+1} - \lambda_{k}$ from below when $\xi = L_{1}^2/L_{2}^2$ is an algebraic number.

\begin{lemma}\label{lem:Estim-Roth}
Assume that $\Omega = (0,L_{1})\times (0,L_{2})$ and that $\xi := L_{1}^2/L_{2}^2$ is an algebraic number of degree greater or equal to two. Then for any $\epsilon > 0$, there exists a constant $c_{0}(\xi,\epsilon)$ such that
\begin{equation}\label{eq:Gap-epsilon}
\lambda_{k+1} - \lambda_{k} \geq \min\left(1, c_{0}(\xi,\epsilon)\, \lambda_{k}^{-1-\epsilon}\right).
\end{equation}
\end{lemma}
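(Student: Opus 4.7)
The plan is to reduce \eqref{eq:Gap-epsilon} to a question about how well $\xi$ can be approximated by rationals and then invoke Roth's Theorem \ref{lem:Roth}. After factoring out $\pi^2/L_1^2$, the Dirichlet eigenvalues on $\Omega$ take the form $\mu_n := n_1^2 + \xi n_2^2$ for $n = (n_1, n_2) \in ({\Bbb N}^*)^2$, so I would write $\lambda_k = (\pi^2/L_1^2)\, \widetilde{\lambda}_k$, where $\widetilde{\lambda}_k$ denotes the $k$-th distinct value among the $\mu_n$. Choose any representatives $n^{(k)}, n^{(k+1)}$ with $\widetilde{\lambda}_k = \mu_{n^{(k)}}$ and $\widetilde{\lambda}_{k+1} = \mu_{n^{(k+1)}}$, and put
$$a := (n^{(k+1)}_1)^2 - (n^{(k)}_1)^2 \in {\Bbb Z}, \qquad b := (n^{(k+1)}_2)^2 - (n^{(k)}_2)^2 \in {\Bbb Z},$$
so that $\widetilde{\lambda}_{k+1} - \widetilde{\lambda}_k = a + \xi b$.

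The argument then splits into two cases. If $b = 0$, then $a \neq 0$ (the two values being distinct), hence $|a + \xi b| = |a| \geq 1$; transferred back via the factor $\pi^2/L_1^2$, this accounts for the first term in the $\min$ appearing in \eqref{eq:Gap-epsilon}. If $b \neq 0$, write $-a/b = p/q$ in lowest terms with $q \geq 1$; since $q$ divides $|b|$ we have $q \leq |b|$. As $\xi$ is algebraic of degree $\geq 2$, Theorem \ref{lem:Roth} provides, for any fixed $\epsilon > 0$, the estimate $|\xi - p/q| \geq c(\xi, \epsilon)/q^{2+\epsilon}$, and so
$$|a + \xi b| \;=\; |b| \cdot \left|\xi - \frac{p}{q}\right| \;\geq\; \frac{c(\xi, \epsilon)\, |b|}{q^{2+\epsilon}} \;\geq\; \frac{c(\xi, \epsilon)}{|b|^{1+\epsilon}},$$
where the last inequality uses $q \leq |b|$.

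It remains to control $|b|$ by $\lambda_k$. From the elementary inequality $\mu_n \geq \xi n_2^2$, both $(n^{(k)}_2)^2$ and $(n^{(k+1)}_2)^2$ are bounded by $\widetilde{\lambda}_{k+1}/\xi$, whence $|b| \leq 2\widetilde{\lambda}_{k+1}/\xi$. By Lemma \ref{lem:Lim-1}, $\widetilde{\lambda}_{k+1}/\widetilde{\lambda}_k \to 1$, so $\widetilde{\lambda}_{k+1} \leq C\widetilde{\lambda}_k$ with $C = C(\xi)$ uniform in $k$ (after adjusting finitely many initial indices by possibly shrinking the final constant). Combining these ingredients in the case $b \neq 0$ gives $\widetilde{\lambda}_{k+1} - \widetilde{\lambda}_k \geq c'(\xi, \epsilon)\, \widetilde{\lambda}_k^{-1-\epsilon}$, and rescaling by $\pi^2/L_1^2$ yields \eqref{eq:Gap-epsilon}. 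The key delicate point is that the exponents must line up exactly: Roth's bound $q^{-2-\epsilon}$ converts, through $q \leq |b|$ and $|b| \lesssim \lambda_k$, into precisely $\lambda_k^{-1-\epsilon}$, with no room to spare. This is why a sharper statement than Liouville's theorem (which would only give an exponent depending on the degree of $\xi$) is genuinely needed, and it is exactly this Roth-exponent that will propagate into $\gamma_1$ and hence into the decay rate of the energy.
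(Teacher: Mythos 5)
Your proof is correct and follows essentially the same route as the paper's: the same dichotomy ($n_2^{(k+1)2}=n_2^{(k)2}$ giving a gap $\geq 1$, versus applying Roth's Theorem \ref{lem:Roth} to $|a+\xi b|$ with the denominator controlled by $|b|\lesssim \widetilde\lambda_{k+1}/\xi$), followed by $\lambda_{k+1}\leq C\lambda_k$ from Lemma \ref{lem:Lim-1}. The only difference is your explicit reduction of $-a/b$ to lowest terms before invoking Roth, a minor technical refinement of the same argument.
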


\begin{proof}
Let $k \geq 1$ be fixed, and for $m\in  {\Bbb N}^*\times{\Bbb N}^*$ let us denote $\mu_{m} := m_{1}^2 + \xi m_{2}^2$. There exist $m,n\in {\Bbb N}^*\times{\Bbb N}^*$ with $m\neq n$ such that
$$\lambda_{k} = {m_{1}^2\pi^2 \over L_{1}^2} + {m_{2}^2 \pi^2\over L_{2}^2} = {\pi^2 \over L_{1}^2}\,\mu_{m}, \qquad
\lambda_{k+1} = {n_{1}^2\pi^2 \over L_{1}^2} + {n_{2}^2 \pi^2\over L_{2}^2}
= {\pi^2 \over L_{1}^2}\,\mu_{n}.$$
Then we can write
$$\lambda_{k+1} - \lambda_{k} = {\pi^2 \over L_{1}^2}\, (\mu_{n} - \mu_{m}) .$$
If $n_{2} = m_{2}$, we have clearly $\mu_{n} - \mu_{m} = n_{1}^2 - m_{1}^2 \geq 1$, since $n_{1}^2 - m_{1}^2 \in {\Bbb N}^*$. If $m_{2} \neq n_{2}$, using Roth's Theorem \ref{lem:Roth}, by \eqref{eq:Roth} we have, for any $\epsilon >0$,
$$\mu_{n} - \mu_{m} = \left|\left(n_{2}^2 - m_{2}^2\right)\xi + n_{1}^2 - m_{1}^2 \right| \geq {c(\xi,\epsilon) \over |n_{2}^2 - m_{2}^2|^{1+\epsilon}}.$$
It is easily seen that
$$|n_{2}^2 - m_{2}^2| \leq \max(m_{2}^2,n_{2}^2) \leq {1 \over \xi}\max(\mu_{m},\mu_{n}) = {1 \over \xi}\,\mu_{n},$$
and this yields 
$$\mu_{n} - \mu_{m}  \geq {\xi^{1+\epsilon}\, c(\xi,\epsilon) \over \mu_{n}^{1+\epsilon}} = {\xi^{1+\epsilon}\pi^{2(1+\epsilon)} \over L_{1}^{2(1+\epsilon)}}\cdot 
{c(\xi,\epsilon) \over \lambda_{k+1}^{1+\epsilon}}.$$
From this, and the fact that for some constant $c_{*} > 0$ depending only on $L_{1},L_{2}$ we have $\lambda_{k+1} \leq c_{*}\, \lambda_{k}$ (cf Lemma \ref{lem:Lim-1}), one is convinced that \eqref{eq:Gap-epsilon} holds. \qed

\end{proof}

To finish this paper,  we state the following decay estimate when $\xi := L_{1}^2/L_{2}^2 $ is an algebraic number, and we point out that when $\xi$ is a transcendant number we cannot state such a result.

\begin{proposition}\label{lem:Membrane-algeb}
Assume that $N = 2$ and $\Omega:=(0,L_{1})\times(0,L_{2})$ where $\xi := L_{1}^2/L_{2}^2 $ is an algebraic number of degree greater or equal to $2$. Let
$$\Omega_{1} := (a_{1},a_{1}+\delta_{1})\times(a_{2},a_{2} +\delta_{2}), \qquad \Omega_{2} := (b_{1},b_{1}+\delta_{1})\times(b_{2},b_{2} +\delta_{2}),$$
and for $j=1$ or $j=2$, let the  functions $b_{j} \in L^\infty(\Omega)$ be such that $b_{j} \geq \epsilon_{j} \geq 0$ on $\Omega_{j}$, where $\epsilon_{j}$ is a constant. Then, when $\epsilon_{2} > 0$, for any $\epsilon > 0$ there exists a constant $c_{*}(\epsilon) > 0$ such that the energy of the solution of \eqref{eq:System-22} satisfies
\begin{equation}\label{eq:Decay-alg}
\|\nabla u(t,\cdot)\|^2 + \|\partial_{t}u(t,\cdot)\|^2 \leq c_{*}\, 
(1+t)^{-2/(9+\epsilon)}\left[
\|\nabla u_{0}\|^2 + \|u_{1}\|^2\right].
\end{equation}
When $\epsilon_{1} > 0$ and $b_{2} \equiv 0$, one has
\begin{equation}\label{eq:Decay-alg-2}
\|\nabla u(t,\cdot)\|^2 + \|\partial_{t}u(t,\cdot)\|^2 \leq c_{*}\, (1+t)^{-2/(11 + \epsilon)}\left[
\|\nabla u_{0}\|^2 + \|u_{1}\|^2\right].
\end{equation}
\end{proposition}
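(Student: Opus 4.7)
The strategy is to apply Theorem \ref{lem:Main-1} with the self-adjoint operator $L := -\Delta$ (Dirichlet boundary conditions on $\Omega$), choosing admissible exponents $\gamma_0, \gamma_1$ tailored to the algebraic/irrational nature of $\xi = L_1^2/L_2^2$. Condition \eqref{eq:Poly-growth-eigen} is immediate from Lemma \ref{lem:Lim-1}. For the gap condition \eqref{eq:Cnd-alpha-k}, I would invoke Lemma \ref{lem:Estim-Roth}: since $\xi$ is algebraic of degree $\ge 2$, for any $\epsilon' > 0$ we have $\lambda_{k+1} - \lambda_k \ge c\, \lambda_k^{-1-\epsilon'}$, and by the same reasoning $\lambda_k - \lambda_{k-1} \ge c\, \lambda_k^{-1-\epsilon'}$ (after absorbing a factor $\lambda_{k+1}/\lambda_k \le c_*$ from Lemma \ref{lem:Lim-1}). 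Hence
$$\frac{\lambda_{k-1}}{\lambda_k - \lambda_{k-1}} + \frac{\lambda_{k+1}}{\lambda_{k+1} - \lambda_k} \le C\, \lambda_k^{2+\epsilon'},$$
so we may take $\gamma_1 := 2 + \epsilon'$.

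The crucial observation for \eqref{eq:Cnd-beta-k} is that irrationality of $\xi$ forces every eigenvalue to be simple: the equality $n_1^2 L_2^2 + n_2^2 L_1^2 = m_1^2 L_2^2 + m_2^2 L_1^2$ with $m_2 \neq n_2$ would give $\xi = (n_1^2-m_1^2)/(m_2^2-n_2^2) \in \mathbb{Q}$, a contradiction; so $m_2 = n_2$ and then $m_1 = n_1$. Consequently each eigenspace is one-dimensional, spanned by the explicit product of sines
$$\phi_k(x) = \frac{2}{\sqrt{L_1 L_2}} \sin\!\left(\frac{n_1 \pi x_1}{L_1}\right)\sin\!\left(\frac{n_2 \pi x_2}{L_2}\right),$$
and estimating $\beta_k$ reduces to computing $\langle B\phi_k, \phi_k\rangle$ on this single function. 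When $\epsilon_2 > 0$, a direct calculation splits $\int_{\Omega_2} |\nabla \phi_k|^2$ into two tensor-product terms with prefactors $n_1^2$ and $n_2^2$; each trigonometric integral $\int_{b_j}^{b_j+\delta_j}\cos^2(n_j\pi x_j/L_j)\,dx_j$ and $\int_{b_j}^{b_j+\delta_j}\sin^2(n_j\pi x_j/L_j)\,dx_j$ is bounded below uniformly in $n_j \ge 1$ (for large $n_j$ by the identity $2\cos^2 = 1+\cos(2\cdot)$ and the Riemann--Lebesgue estimate $O(1/n_j)$ for the oscillatory remainder; for the remaining finitely many indices by the fact that $\sin^2, \cos^2$ vanish only on a null set of $[0,L_j]$). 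This yields $\int_{\Omega_2}|\nabla\phi_k|^2\,dx \ge c(n_1^2+n_2^2) = c'\lambda_k$, hence $\beta_k \le c''\lambda_k^{-1}$ and $\gamma_0 = -1$. When $b_2 \equiv 0$ and $\epsilon_1 > 0$, the same uniform positivity applied to $\int_{\Omega_1}|\phi_k|^2\,dx$ gives a positive lower bound independent of $k$, hence $\gamma_0 = 0$.

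Plugging into $m = 3 + 2\gamma_0 + 4\gamma_1$ yields $m = 9 + 4\epsilon'$ and $m = 11 + 4\epsilon'$ respectively; relabelling $\epsilon := 4\epsilon'$ gives the decay rates $(1+t)^{-2/(9+\epsilon)}$ and $(1+t)^{-2/(11+\epsilon)}$ of \eqref{eq:Decay-alg}--\eqref{eq:Decay-alg-2}. Conceptually, algebraicity of $\xi$ enters only quantitatively—via Roth's theorem through Lemma \ref{lem:Estim-Roth}—in controlling $\gamma_1$; mere irrationality already delivers simplicity of eigenvalues, which is what lets us sidestep the open problem, flagged in the remark after Corollary \ref{lem:Decay-N-2-gen}, of lower-bounding $\int_{\Omega_0}|\phi|^2$ or $\int_{\Omega_0}|\nabla\phi|^2$ over linear combinations of degenerate eigenfunctions on interior subdomains. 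The main technical obstacle is therefore not the concentration estimate itself but the careful bookkeeping of the $\epsilon'$-loss in the gap, and verifying that the trigonometric integrals are genuinely uniformly positive in $n_1,n_2 \ge 1$ for interior rectangles $\Omega_1,\Omega_2$ that are not required to touch $\partial\Omega$.
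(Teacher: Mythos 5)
Your proposal is correct and follows essentially the same route as the paper: Roth's theorem via Lemma \ref{lem:Estim-Roth} together with Lemma \ref{lem:Lim-1} gives $\gamma_{1} = 2+\epsilon'$, simplicity of the eigenvalues (which the paper asserts and you rightly prove from irrationality of $\xi$) reduces \eqref{eq:Cnd-beta-k} to the explicit product eigenfunctions, and the uniform positivity of the $\sin^2$/$\cos^2$ integrals on the interior rectangles yields $\gamma_{0}=-1$ (resp.\ $\gamma_{0}=0$), hence $m = 9+4\epsilon'$ (resp.\ $11+4\epsilon'$) and the stated rates after relabelling $\epsilon$. The only differences are that you supply details the paper leaves implicit, which is fine.
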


\begin{proof}
First note that for any $\epsilon > 0$, thanks to Lemmas \ref{lem:Lim-1} and \ref{lem:Estim-Roth}, we have for some constant $c_{0}(\epsilon)$
$$ {\lambda_{k-1} \over \lambda_{k} - \lambda_{k-1}} + {\lambda_{k+1} \over \lambda_{k+1} - \lambda_{k}} \leq c_{0}(\epsilon)\, \lambda_{k}^{2+\epsilon},$$
so that we can take $\gamma_{1} = 2+\epsilon$.

On the other hand, in this case each eigenvalue $\lambda_{k} = 
(n_{1}^2\pi^2/L_{1}^2) + (n_{2}^2\pi^2/L_{2}^2)$ is simple and the corresponding eigenfunction is 
$$\phi_{k}(x) = 2\sin(n_{1}\pi x_{1}/L_{1})\sin(n_{2}\pi x_{2}/L_{2})/\sqrt{L_{1}L_{2}}.$$
Therefore, when $\epsilon_{2} > 0$, it is easily seen that
$$\int_{\Omega_{2}}|\nabla\phi_{k}(x)|^2dx \geq c_{*}\, b_{1}b_{2}\, \pi^2 \left({n_{1}^2 \over L_{1}^2} + {n_{2}^2 \over L_{2}^2}\right) = c_{*}\,b_{1}b_{2}\, \lambda_{k}\, ,$$
so that we can take $\gamma_{0} = -1$, which implies $m = 3 + 2\gamma_{0} + 4\gamma_{1} = 9 + 4\epsilon$, and \eqref{eq:Decay-alg} can be deduced.

When $b_{2}\equiv 0$ and $\epsilon_{1} > 0$, we notice that for some constant $c_{*} > 0$ and all $k \geq 1$ we have
$$\int_{\Omega_{1}}|\phi_{k}(x)|^2dx \geq c_{*},$$
therefore we can take $\gamma_{0} = 0$, hence $m = 3 + 2\gamma_{0} + 4\gamma_{1} = 11 + 4\epsilon$, and \eqref{eq:Decay-alg-2} follows easily.
\qed
\end{proof}

\noindent{\bf Acknowledgment.} This work was essentially done during the visit of the first author at the Mathematics Department of the {\it Beijing Institute of Technology\/} in October 2015. The first author would like to express his thanks to the Mathematics Department of the {\it Beijing Institute of Technology\/} for its hospitality. He would also thank his colleague Vincent S\'echerre (Universit\'e Paris--Saclay, UVSQ, site de Versailles), for helpful discussions.

\end{document}